\theoremstyle{plain}
\newtheorem{thm}{Theorem}[section]
\newtheorem{theorem}{Theorem}[section]
\newtheorem{corollary}[thm]{Corollary}
\newtheorem{lemma}[thm]{Lemma}
\newtheorem{proposition}[thm]{Proposition}
\theoremstyle{definition}
\newtheorem{defn}{Definition}[section]
\newtheorem{definition}{Definition}[section]
\newtheorem{remark}[defn]{Remark}
\newtheorem{example}[defn]{Example}
\numberwithin{equation}{section}
\numberwithin{figure}{section}
\DeclareMathOperator{\re}{Re} 
\DeclareMathOperator{\im}{Im}
\DeclareMathOperator*{\res}{\mathrm{Res}}
\def\I{\mathrm{i}}
\def\D{{\mathbb D}}
\def\R{{\mathbb R}}
\def\C{{\mathbb C}}
\def\Z{{\mathbb Z}}
\begin{document}

\title{Two dimensional potential theory \\with a view towards vortex motion:\\
Energy, capacity and Green functions
\textsuperscript{1}}
\author{
Bj\"orn Gustafsson\textsuperscript{2}
}
 
\date{\today}

\maketitle

\tableofcontents

\begin{abstract} 
The paper reviews some parts of classical potential theory with applications to two dimensional fluid dynamics, in particular vortex motion. 
Energy and forces within a system of point vortices are similar to those for point charges when the vortices are kept fixed, but the dynamics 
is different in the case of free vortices. Starting from Euler's and Bernoulli's equations we derive these laws. Letting the number of vortices tend to infinity 
leads in the limit to considerations of equilibrium distributions, capacity, harmonic measure and many other notions in potential theory. In particular various kinds of Green functions have a central role in the paper, and we make a distinction between electrostatic and hydrodynamic Green functions.

We also consider the corresponding concepts in the case of closed Riemann surfaces provided with a metric. From a canonically defined 
monopole Green function we rederive much of the classical theory of harmonic and analytic forms on Riemann surfaces. In the final section 
of the paper we return to the planar case, which then reappears in the form of a symmetric Riemann surface, the Schottky double. Bergman kernels, 
electrostatic  and hydrodynamic, come up naturally as well as other kernels, 
and associated to the Green function there is a certain Robin function which is important for vortex 
motion, and which also relates to capacity functions in classical potential theory.
\end{abstract}

\noindent {\it Keywords:} Vortex motion, energy, capacity, harmonic measure, balayage, 
equilibrium distribution, transfinite diameter,
hydrodynamic and electrostatic Green function, monopole Green function, Robin function, 
Bergman kernel, Schiffer kernel, Szeg\"o kernel, Garabedian kernel.

\noindent {\it MSC:} 31A15, 30F15, 76B47

 \footnotetext[1]
{Revised version. The present version provides minor corrections, additions and stylistic adjustments compared to the published version in {\it Geometric Mechanics},\\
https://www.worldscientific.com/doi/epdf/10.1142/S2972458924300018}

\footnotetext[2]
{Department of Mathematics, KTH, 100 44, Stockholm, Sweden.\\
Email: \tt{gbjorn@kth.se}}


\section{Introduction}

\subsection{General}

This paper reviews some parts of classical potential theory with applications to 
two dimensional hydrodynamics, and to some extent electrostatics. There are four main sections,
which naturally make the paper fall into two major parts. The first part, the body of which consists Sections~\ref{sec:energy and Green}
and \ref{sec:capacity}, essentially discusses classical potential theory in the plane, with concepts such as energy, capacity
and Green functions. The second part, Sections~\ref{sec:monopole Green} and \ref{sec:planar domains},
is more oriented towards geometric function theory on Riemann surfaces. But it eventually returns to  planar
domains, such a domain then being ``doubled'' to a compact Riemann surfaces, the Schottky double. 
In this second part, analytic differentials and kernel functions, such as the Bergman kernel, play central roles.

The first main part of the paper is in principle very classical, with some results dating back almost 200 years.
In some sense, the origin of potential theory is Newton's ``Principia''
(``Philosophiae Naturalis Principia Mathematica'', 1687) 
\cite{Newton-1687}, but more appropriate for the present treatment is George Green's   ``Essay on the Applications of Mathematical Analysis
to the Theory of Electricity and Magnetism'' (1828) \cite{Green-1828},
in which the word ``potential'' was coined, and from which  (later) the term ``Green function'' came into use.
A classical text in potential theory is Kellogg's book \cite{Kellogg-1967} from 1929.
Still, the present treatment has a somewhat personal slant, and some results and points of view may be new.
And in any case it leads into presently very active areas of research.

The latter is even more true for the second main part of the paper, which is a further development of several recent papers,
for example  \cite{Grotta-Ragazzo-Gustafsson-Koiller-2024}, and which also is inspired by present high activity
in geometric function theory related to physics areas such as conformal field theory and string theory. 
Much of the mathematics in this part is based on the classical text \cite{Schiffer-Spencer-1954}
by M.~Schiffer and D.~Spencer, in particular chapter~4  in
that book. However, we start out with having an independent metric available, which makes it possible to base the 
exposition on a certain monopole Green function. Such a path has also been followed in the book \cite{Lang-1988},
which is partly based on ideas of G.~Faltings \cite{Faltings-1984}, and in the paper \cite{Takhtajan-2001}.
The latter text in addition links the subject to quantum field theory and conformal field theory.

Conformal field theory is related to vortex motion in various ways. One example is the Liouville equation,
which is an equation satisfied, in the case of a simply connected fluid region, by the Kirchoff-Routh stream function, 
a potential function which was introduced in \cite{Lin-1943} (see also \cite{Lamb-1993}) 
and which in the present text appears as a Robin function. 
This function is subject to an inhomogeneous transformation law, which makes it, in the terminology of
\cite{Kang-Makarov-2013}, be a ``pre-pre-Schwarzian form''.
In modern physics the Liouville equation plays a central role in quantum gravity. A few articles in this respect are 
\cite{Alvarez-1987, Zamolodchikov-2005, Takhtajan-Teo-2006}.

To describe in some more detail the contents of the present paper, we begin by  summarizing
some notations and conventions in  Section~\ref{sec:notations}. 
After that, Section~\ref{sec:energy and Green} starts with discussing  the force between two bound 
point vortices in the plane. 
The force law in this context is similar to the two-dimensional versions of the Newton and Coulomb laws, 
with the force decaying as one over the distance, but we derive it in a somewhat special way,
namely form Bernoulli's law of hydrodynamics. This means that it comes out as a consequence, not
of Newton's law of gravity, but of Newton's general laws of mechanics, specifically his ``second law'', applied to
a continuum of particles.

We also discuss the motion of free vortices, with a derivation of the dynamics and regularization of Euler's 
equation adapted to our special approach. The energy of a system of point vortices, which actually
is a renormalized total kinetic energy of the fluid as a whole, has the same
form as the electrostatic energy for a system of point charges. But for a multiply connected fluid region,
the hydrodynamic and electrostatic points of view diverge, each of them having its own Green function. 
We also make efforts to connect the various notions of capacity, transfinite diameter and 
related Robin constants to each other. 

The above material is more exactly exposed in Section~\ref{sec:capacity}, where also concepts of equilibrium potential, 
harmonic measure of balayage are discussed, plus Hadamard's variational formula for the Green function.
At the end of Section~\ref{sec:capacity} we introduce the Bergman kernel, 
a mixed second order derivative of the Green function which
reproduces values for analytic functions in a Hilbert space setting.  
Further analysis of the Bergman and related kernels from the point of view of general  Riemann surface theory
is taken on in Sections~\ref{sec:monopole Green} and \ref{sec:planar domains}.

In Section~\ref{sec:monopole Green} we turn to curved surfaces, more precisely closed Riemann surfaces provided 
with a Riemannian metric compatible with the conformal structure. 
Here we partly redo the classical theory of harmonic and analytic differentials by
using the monopole Green function. As the name indicates it has only one pole, which is 
more precisely a logarithmic singularity, and as a substitute for the 
necessary (on a closed surface) counter-pole there is a uniform counter-sink, or counter-vorticity in our terminology.
Its density is proportional to the density of the metric, and it appears naturally in the context of Hodge decomposition of the
given source.

The regular part of the monopole Green function starts, when Taylor expanded around the pole, with a constant term which
depends on the location of the pole, and also on the local coordinate used. 
This is by definition the (coordinate) Robin function, or Kirchoff-Routh's stream function. The dependence on the
coordinate is such that the exponential of it is (after suitable scaling) a conformally invariant metric. 
The dependence on the coordinate can be avoided by  expressing the singularity of the Green function in terms of the intrinsic metric. 
Then the Robin function becomes a true function. 

Because of the uniform counter-sink the monopole Green function is not harmonic, and it might therefore seem surprising that
the classical theory of harmonic differentials on Riemann surfaces can be developed from it. 
But the problem with the Green function not being harmonic disappears after some 
differentiations, and we try to explain in detail how it works out. Similar approaches are available in
\cite{Lang-1988, Takhtajan-2001}. In this context we also re-derive some recent results
of Okikiolu, Steiner, and Grotta-Ragazzo on the  the Laplacian of the Robin function. 
The mixed second order derivatives of the Green function, or of its regular part, are fundamental in Riemann surface theory. 
These are the Schiffer kernel (meromorphic with a double pole) and the Bergman kernel (everywhere holomorphic),
the latter being reproducing kernel for global holomorphic one-forms on the surface.

In the final Section~\ref{sec:planar domains} we return to the planar case, but now in the light of the theory on closed surfaces. 
More precisely, we complete the planar surface with a backside so that it becomes a compact Riemann surface, the Schottky double. 
This is a classical technique, but usually one stays with just the conformal structure. For the vortex motion
one also needs a metric, and it is actually possible to just copy the planar Euclidean metric to backside and use it right away.
The drawback is that it becomes singular over the boundary, but it is no worse than the density of the metric becoming
just Lipschitz continuous when expressed in a coordinate which is smooth over the boundary. And the fluid will not cross this boundary 
anyway, the boundary is a streamline.

For the homology basis for the double of a planar domain of connectivity $\texttt{g}+1$ there is a natural division 
of the curves into  $\alpha$-curves and $\beta$-curves, where in our presentation the $\beta$-curves are those
which are homologous to the $\texttt{g}$ inner boundary components of the domain. These two classes of curves
represent a dichotomy of the continued development of the planar geometric function theory into {hydrodynamic} and 
{electrostatic} versions. This terminology is inspired by a book by H.~Cohn \cite{Cohn-1980}.
The ordinary Dirichlet Green function for the domain will now be renamed as being the 
electrostatic Green function, $G_{\rm electro}(z,a)$, and the specially adapted Green function for fluid dynamics
will be called the hydrodynamic Green function, $G_{\rm hydro}(z,a)$. The latter is really nothing new.  For vortex motion
it has been used in \cite{Lin-1943, Gustafsson-1979, Flucher-Gustafsson-1997, Crowdy-Marshall-2005} and in several
more recent papers. Some versions of it are also well-known from the theory of conformal mapping of multiply connected domains
\cite{Koebe-1916, Crowdy-Marshall-2006}.  See also the survey paper \cite{Crowdy-2008}.
Both types of  Green functions, as well as other domain functions, can be obtained from
the monopole Green function for the Schottky double, which we in this context denote $G_{\rm double}(z,a)$.  

From the second mixed derivatives of $G_{\rm electro}(z,a)$ and $G_{\rm hydro}(z,a)$ one obtains
Bergman reproducing kernels for Hilbert spaces of analytic functions in the planar domain,
see in general \cite{Bergman-1970}. For the purpose of the present article these will be denoted
$K_{\rm electro}(z,a)$ and $K_{\rm hydro}(z,a)$, where the first one is the ordinary Bergman kernel, briefly discussed 
also in Section~\ref{sec:Bergman}. 

The second kernel, $K_{\rm hydro}(z,a)$, which is the Bergman kernel for those analytic function which have a single-valued
integral, can be obtained also from Neumann type functions, and we give an example of how this works out. Finally, 
towards the end of the paper we mention the Szeg\"o kernel, denoted $K_\text{Szeg\"o}(z,a)$, which is a reproducing kernel with respect to arc length 
measure on the boundary. The square of the Szeg\"o kernel is squeezed between the two Bergman kernels and can be
related to a somewhat mysterious Green function, which we denote $G_\text{Szeg\"o}(z,a)$. 
It  seems to be an open question whether this Green function has any physical interpretation.
The Szeg\"o kernel itself, however, plays important roles in many applications. One over the Szeg\"o kernel is closely
related to the so-called prime form \cite{Hejhal-1972, Fay-1973, Crowdy-2010}, and in conformal field theory the Szeg\"o kernel
appears as a fermionic propagator, or two-point function \cite{Raina-1989, Hoker-2024}. 
 
Part of the last main section is devoted to summarizing, basically  from \cite{Sario-Oikawa-1969},
known results on so-called capacity functions. Specifically, we cite without proofs almost complete sets
of  estimates for such quantities, and in addition explain their relations to conformal mappings and extremal problems for analytic functions.


\subsection{Personal background to the paper}

Much of the contents of the present paper has its origin in the research bulletin \cite{Gustafsson-1979}, which was part of the author's
doctoral thesis ``Topics in geometric function theory and related questions of hydrodynamics'' (1981). That title
could as well have been the title also of the present paper, 
geometric function theory as a subject being a junction of complex analysis and potential theory, often in the context of Riemann 
surface theory. The much related paper \cite{Richardson-1980} appeared simultaneously with, but independently of, \cite{Gustafsson-1979}.
Some main sources of inspirations for the paper at hand are the books
\cite{Schiffer-Spencer-1954, Sario-Oikawa-1969, Ahlfors-1973, Cohn-1980}. 
As for physics and fluid dynamics, much of the inspiration comes from the books \cite{Arnold-1978, Arnold-Khesin-1998}.

After the work \cite{Gustafsson-1979} the author did not work much on vortex motion. Two exceptions were the short paper
\cite{Gustafsson-1990a}, inspired by a question by Avner Friedman, and a collaboration with Martin Flucher, which
resulted in the research bulletin \cite{Flucher-Gustafsson-1997}, with most of its contents also appearing in the book \cite{Flucher-1999}.  
There has also been several rewarding contacts with Darren Crowdy and his students, but only after an invitation by Stefanella Boatto to a conference
in Rio de Janeiro in 2012, and reading the inspiring paper \cite{Boatto-Koiller-2013}, more continuous
work on vortex motion was taken on. This resulted in two papers  \cite{Gustafsson-2019, Gustafsson-2022a},
and most recently collaboration with Clodoaldo Grotta-Ragazzo and Jair Koiller in \cite{Grotta-Ragazzo-Gustafsson-Koiller-2024}.

This work has benefitted much from collaborations with (among others)
Vladimir Tkachev, Ahmed Sebbar and Steven Bell. And most recently with Clodoaldo Grotta-Ragazzo and Jair Koiller. Indeed, as for the
latter two, there is a considerable overlap of the present paper with \cite{Grotta-Ragazzo-Gustafsson-Koiller-2024}, although this paper
more relies on complex analytic techniques (in combination with potential theory) and less on pure differential geometry. 

\subsection{Acknowledgements}

The author wishes to thank all the mentioned persons for fruitful collaborations and discussions. And he keeps in strong memory his
supervisor Harold S.~Shapiro (1928 -- 2021) \cite{Gustafsson-2022}, who in the 1970:s suggested three very fruitful research subjects (vortex motion is
one of them) which has kept the author busy for more than 50 years.
  

\section{Some notational conventions}\label{sec:notations}

We generally work within the frame of complex analysis in one dimension, denoting points in the complex plane
by letters such as $z=x+\I y$. Such letters also denote complex coordinates when working on a Riemann surface,
which means that they strictly speaking have a double meaning, however in a way which is customary in the subject.
Ideally, points on a Riemann surface could be denoted like $P, Q,\dots$, and thinking on a Riemann surface as a manifold
covered by patches with complex analytic variables, such as $z, \tilde{z}, w, \dots$, 
the complex coordinate value for a point $P$ would be $z(P), \tilde{z}(P), w(P), \dots$. However we shall not be that formal.

The area measure in the plane is denoted $dxdy$, and often it is convenient to turn to complex coordinates and to interpret
that expression as a wedge product:
$$
dxdy=dx\wedge dy= -\frac{1}{2\I}dz\wedge d\bar{z}= \frac{1}{2\I}d\bar{z}\wedge dz, 
$$ 
often with the $\wedge$ sign omitted. This is useful when writing Green's, Gauss' or Stokes'  formulas. With
differentials written as
$$
df=\frac{\partial f}{\partial x}dx+\frac{\partial f}{\partial y}dy=\frac{\partial f}{\partial z}dz
+\frac{\partial f}{\partial \bar{z}}d\bar{z},
$$ 
$$
d(fdx+gdy)=df\wedge dx +dg\wedge dy, 
$$
these are all are immediate consequences of Stokes' general theorem,
$$
\int_D d\omega =\int_{\partial D}\omega,
$$
for $\omega$ a differential form of a degree which matches $D$. On a Riemann surface with a metric there is an area form (volume form
in principle), and we interpret that as $2$-form, denoted ${\rm vol}$. In general we follow \cite{Frankel-2012} as for
notations in differential geometry. See also \cite{Griffiths-Harris-1978, Krantz-2004}, for more complex analytic oriented texts.

The Wirtinger derivatives are
$$
\frac{\partial}{\partial z}=\frac{1}{2}\big(\frac{\partial}{\partial x}-\I\frac{\partial}{\partial y}\big), \quad
\frac{\partial}{\partial \bar{z}}=\frac{1}{2}\big(\frac{\partial}{\partial x}+\I\frac{\partial}{\partial y}\big),$$
and the Laplace operator 
$$
\Delta=\frac{\partial^2}{\partial x^2}+\frac{\partial^2}{\partial y^2}=4\frac{\partial^2}{\partial z \partial \bar{z}},
$$ 
 in terms of any local variable $z=x+\I y$.
The Dirac measure, or point mass, at a point $a$ is denoted  $\delta_a$. In the first sections this has its usual meaning
and can be identified with the ``delta function'' $\delta(z-a)$ in the plane. However, when working on Riemann surfaces
in Section~\ref{sec:monopole Green} and later we consider it as a ``current'', with the area measure built in. Then it is to be 
identified with what is $\delta(z-a)dxdy$ in the planar case.

Derivatives are usually taken in the distributional sense. In Section~\ref{sec:Bergman} (after (\ref{residue}))
we make an attempt to explain the
meaning of that when computing the fundamental solution of $\partial/\partial\bar{z}$. 


\section{Energy and Green function in the planar case}\label{sec:energy and Green}

\subsection{Coulomb's and Bernoulli's laws}\label{sec:Bernoulli}

Coulomb's law on the force between two point charges, $q_1$ and $q_2$ on a distance $r$
in Euclidean three space,
$$
{\bf F}=-\frac{1}{4\pi \varepsilon_0}\frac{q_1q_2}{r^2}
$$ 
is usually compared with Newton's corresponding law of gravitation, 
$$
{\bf F}=G\,\frac{m_1m_2}{r^2}.
$$
In this paper we shall mainly discuss two-dimensional versions of such laws, in which case the
dependence on distance is like $1/r$.
And as a rather unusual twist we shall derive such a distance law for vortices
from Bernoulli's law in hydrodynamics.

When Newton's  second law of general mechanics, ${\bf F}=m{\bf a}$, 
is formulated for a continuum of particles, a ``fluid'',  it becomes (in absence of viscosity) {\it Euler's law}
of motion, in standard notation ($p=$ pressure, $\rho=$ density, ${\bf v}=$ fluid velocity)
\begin{equation}\label{Euler}
-\nabla p=\rho \Big(\frac{\partial{\bf v}}{\partial t}+({\bf v}\cdot \nabla){\bf v}\Big).
\end{equation}
If the flow moreover is stationary ($\partial{\bf v}/\partial t=0$) that equation can be integrated  to give {\it Bernoulli's law}, 
\begin{equation}\label{Bernoulli}
p+\frac{\rho|{\bf v}|^2}{2}={\rm constant}. 
\end{equation}

Assume now that the fluid is incompressible, $\nabla\cdot {\bf v}=0$,
with density $\rho=1$. In two dimensions, and on identifying vectors with complex numbers in the usual way,
this means that there is a stream function $\psi(z)$, $z=x+\I y$, for which
\begin{equation}\label{wpsi}
{\bf v}=-\I\big(\frac{\partial \psi}{\partial x}+\I \frac{\partial \psi}{\partial y}\big)=-2\I \frac{\partial \psi}{\partial\bar{z}}.
\end{equation}
Sometimes this is written ${\bf v}=\nabla^{\perp}\psi$, as a kind of vector notation,
but we shall usually stay with complex variable notations in the present paper.
The vorticity is the curl of ${\bf v}$, and in our two-dimensional setting this has only one
non-zero component, namely $\omega=-\Delta \psi$.

Away from regions of vorticity $\psi$ is a harmonic function, hence ${\bf v}$ is anti-holomorphic.
We shall still allow some vorticity, but only in the form of isolated point vortices.
This means that $\psi$ has logarithmic singularities and that $\partial \psi/\partial z$ is a meromorphic function.
In order to keep the flow stationary we assume initially that the point vortices are `bound', i.e. do not move.
One may for this purpose think of point vortices as being degenerate boundary components rather than being
part of the fluid region. The analysis will be extended to the case of moving vortices in Section~\ref{sec:moving vortices}.


\begin{figure}
\begin{center}
\includegraphics[scale=0.6]{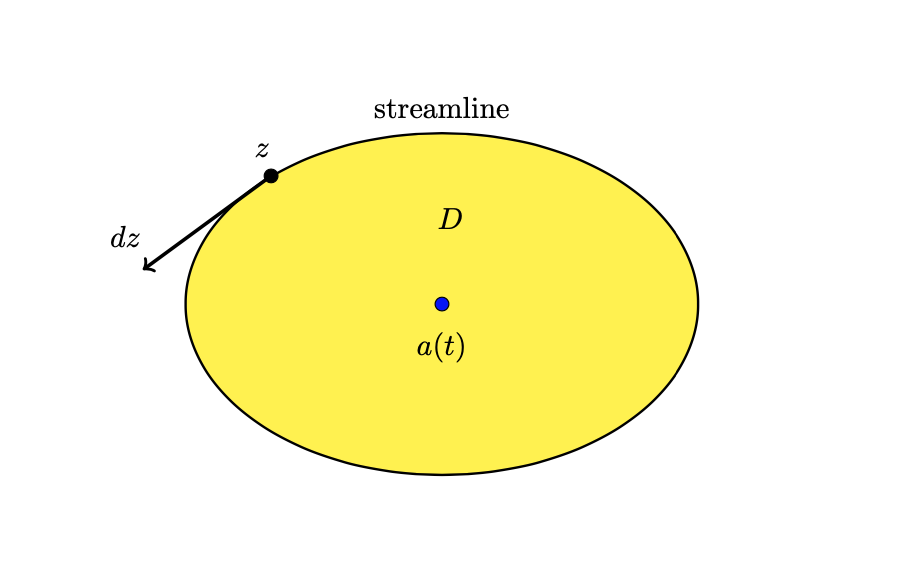}
\end{center}
\caption{Illustration of fluid patch around a point vortex and integration around the boundary.}
\label{fig:vortexD}
\end{figure} 
   

Consider first the case of an isolated point vortex of strength $\Gamma$ at a point $z=a$.
Let $D$ be a small region containing $a$ for which $\partial D$ is a stream line, so that
$d\psi=(\partial \psi/\partial z) dz+ (\partial \psi/\partial \bar{z})d\bar{z}=0$ along $\partial D$
(illustration in Figure~\ref{fig:vortexD}). 
As the function $\partial \psi/\partial z$ is meromorphic
in $D$ with just a simple pole at $a$,  it is easy to compute
the force ${\bf F}_a$ that the surrounding fluid exerts on $D$: using (\ref{Bernoulli}), (\ref{wpsi}) we have
$$
{\bf F}_a=\oint_{\partial D}p\cdot  \I dz
=-2\I\oint_{\partial D} \frac{\partial \psi}{\partial \bar{z}}\frac{\partial \psi}{\partial {z}}dz
$$
$$
=+2\I\oint_{\partial D} \frac{\partial \psi}{\partial \bar{z}}\frac{\partial \psi}{\partial \bar{z}}d\bar{z}
=4\pi\overline{\res_{z=a} \big(\frac{\partial \psi}{\partial {z}}\big)^2d{z}}.
$$
Expanding around $z=a$,
\begin{equation}\label{psi}
\psi(z)= \frac{\Gamma}{2\pi}\re\big(- \log (z-a)+ h_0 (a) + h_1(a)(z-a) +\dots\big),
\end{equation}
where the coefficients $h_0(a), h_1(a),\dots$ represent the influence from other vortices
and boundaries,  we get
\begin{equation}\label{w}
\frac{\partial \psi}{\partial z}=\frac{\Gamma}{4\pi}\big(-\frac{1}{z-a}+h_1(a)+\dots \big),
\end{equation}
\begin{equation}\label{w2}
\big(\frac{\partial \psi}{\partial z}\big)^2=\frac{\Gamma^2}{16\pi^2}\Big(\frac{1}{(z-a)^2}-\frac{2h_1(a)}{z-a}+\text{regular terms} \Big).
\end{equation}
From this we can read off the residue above to obtain
\begin{equation}\label{force}
{\bf F}_a
=-\frac{\Gamma^2}{2\pi}\,\overline{h_1(a)}.
\end{equation}

The above is a general expression for the force on the point vortex when it is bound. Assume now that the fluid region
is the entire complex plane and that there
is only one more vortex, at a point $b$ and of strength $-\Gamma$. Then the complete stream function
is (up to an additive constant)
$$
\psi(z)=\frac{\Gamma}{2\pi}\re\big(-\log  (z-a)+\log (z-b)\big)
$$
$$
=\frac{\Gamma}{2\pi}\re\big(-\log  (z-a)+\log (a-b)+\log(1+\frac{z-a}{a-b})\big)
$$
$$
=\frac{\Gamma}{2\pi}\re\big(-\log  (z-a)+\log (a-b)+\frac{z-a}{a-b}+ \mathcal{O}((z-a)^2)\big).
$$
Hence, in (\ref{psi}), 
$$
h_0(a)= \log |a-b|, \quad h_1(a)=\frac{1}{a-b}.
$$
This gives, with $r=|a-b|$, ${\bf e}_r=-\frac{a-b}{|a-b|}$, the attractive force
\begin{equation}\label{F}
{\bf F}_a=\frac{\Gamma^2}{2\pi}\,\frac{1}{\overline{a-b}}=\frac{\Gamma^2}{2\pi}\,\frac{a-b}{|a-b|^2}
=-\frac{\Gamma^2}{2\pi}\,\frac{{\bf e}_r}{r}
\end{equation}
acting on the vortex at $a$ due to the vortex at $b$. Changing the roles, the force on $b$ is 
similarly ${\bf F}_b=-{\bf F}_a$. Recall also that the vortex strengths are actually $\Gamma_a=\Gamma$
and $\Gamma_b=-\Gamma$. We see that forces between bound vortices
behave in full concordance with forces between charges in two dimensional electrostatics:
equal vortices repel each other, opposite vortices attract each other.

\begin{remark}\label{rem:affine connection}
Under conformal changes of coordinates the coefficient $h_1(a)$ changes as an affine connection,
see Lemma~\ref{lem:transformation} and Remark~\ref{rem:connections}.
It is interesting to compare this fact with views
in fundamental particle physics saying that forces, and particles mediating forces (bosons), 
often have the roles of being connections from mathematical points of view. The connections in the present context
are {\it affine connections}, initially invented as necessary correction terms 
for describing parallel transport and covariant derivatives in the tangent bundle of a manifold. See in general \cite{Frankel-2012}.
See also \cite{Grotta-Ragazzo-Barros-Viglioni-2017} for additional discussion of the force on a vortex, 
plus further differential geometric details. 
\end{remark} 


\subsection{Energy for systems of point vortices, or charges.}

If we consider $z=b$ as a variable point, the force ${\bf F}_b=-{\bf F}_a$ acting on it 
due to the vortex at $a$ represents  a conservative field:
$$
{\bf F}_b(z)=\Gamma_b\nabla V_a(z), \quad V_a(z)=-\frac{\Gamma_a}{2\pi}\log{|z-a|}.
$$ 
The work needed to move the vortex indexed by  $b$ in the field of the vortex at $a$,
from some initial position $w$ to its new position $z$, is
$$
\int_w^z {\bf F}_b\cdot d{\bf r}=-\Gamma_b (V_a(z)-V_a(w)),
$$
with ${\bf F}_b$ above, but evaluated along a curve from $w$ to $z$.

If we next have a whole system of vortices, of strengths $\Gamma_j$ and locations $z_j$ ($1\leq j\leq n$, say), then the work
needed to successively build this up from some given initial positions $w_j$ is
$$
E(z_1,\dots,z_n)=\sum_{k=1}^n\Gamma_k \sum_{j=1}^{k-1}(V_{z_j}(z_k)-V_{z_j}(w_k))
$$  
$$
=\sum_{1\leq j<k\leq n}\Gamma_kV_{z_j}(z_k)+C
=\frac{1}{2\pi}\sum_{j<k}\Gamma_j\Gamma_k \log\frac{1}{|z_j-z_k|}+C.
$$
Here the constant $C$ depends on $w_1,\dots,w_n$:
$$
C=-\frac{1}{2\pi}\sum_{j<k}\Gamma_j\Gamma_k \log\frac{1}{|w_j-w_k|}.
$$
We now define the {\it energy} of the point vortex system by simply ignoring 
the fairly arbitrary constant $C$:
\begin{equation}\label{energy}
E(z_1,\dots, z_n)
=\frac{1}{2\pi}\sum_{j<k}\Gamma_k V_{z_j}(z_k)
=\frac{1}{4\pi}\sum_{j\ne k}\Gamma_k \Gamma_j \log \frac{1}{|z_k-z_j|}.
\end{equation}

\begin{remark}\label{rem:2pi}
The factor $1/2\pi$ which appears in many formulas (as above), is natural from a physical point of view,
but it is often absent in mathematical texts on potential theory, such as
\cite{Sario-Oikawa-1969, Landkof-1972, Ahlfors-1973, Doob-1984, Ransford-1995, Saff-Totik-1997, Armitage-Gardiner-2001, Helms-2014}.
One has to keep this in mind when comparing formulas from different sources.
There is also an additional factor $\frac{1}{2}$ in expressions for energy which shows up if we sum over all pairs $(j,k)$, $j\ne k$,
as in the last term in (\ref{energy}). This factor is common in physics texts.
\end{remark}


\subsection{Moving vortices}\label{sec:moving vortices}

If vortices are not bound, but move, then the flow is no longer stationary and all quantities depend on time. 
We shall introduce two complex-valued functions, $f$ and $h$, for which Euler's equation takes the
simple formulation (\ref{dfh}) below. The function $f$, which was also used in \cite{Gustafsson-2019},
takes care of the dynamics in the sense that its imaginary part is (minus) the time derivative
of the stream function (compare (\ref{Phi}) below), while the function $h$ essentially is the vorticity times the fluid velocity. Thus $h$
reminds of what is called {\it helicity} in three dimensions, see \cite{Arnold-Khesin-1998}. 
\begin{definition}
For $z$ in the fluid region, which may be the entire complex plane or a smaller region, we define
\begin{equation}\label{f}
f(z,t)=\frac{1}{2}|{\bf v}|^2+p-\I\frac{\partial \psi}{\partial t}
=2\frac{\partial \psi}{\partial z}\frac{\partial \psi}{\partial \bar{z}}+p-\I \frac{\partial \psi}{\partial t},
\end{equation}
\begin{equation}\label{h}
h(z,t)=\frac{1}{2}(\Delta \psi) \nabla \psi 
=4\frac{\partial^2 \psi}{\partial z\partial \bar{z}}\frac{\partial \psi}{\partial \bar{z}}
=2\frac{\partial }{\partial z}\Big(\frac{\partial \psi}{\partial \bar{z}}\Big)^2,
\end{equation}
where $\nabla\psi$ in the latter equation shall be considered as a complex number.
Since $\psi$ is determined only up to a constant, which may depend on time,  $f$ contains a free
additive imaginary constant.
\end{definition}

The significance of these functions will be explained shortly. First we restate Euler's equation (\ref{Euler})
in terms of $f$ and $h$ in the case that all data are smooth.

\begin{proposition}\label{prop:dfh}
In the case of smooth vorticity distribution, Euler's equation takes the simple form
\begin{equation}\label{dfh}
\frac{\partial f}{\partial\bar{z}}=h.
\end{equation}
An equivalent statement is that
\begin{equation}\label{weak Euler}
\int_{\partial D}\Big( fdz+2\big(\frac{\partial \psi}{\partial \bar{z}}\big)^2d\bar{z}\Big)=0
\end{equation}
holds for any subregion $D$ of the fluid domain. 
\end{proposition}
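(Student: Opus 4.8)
The plan is to start from Euler's equation \eqref{Euler} written out in complex notation and massage it into the form $\partial f/\partial\bar z = h$. First I would use the incompressibility representation \eqref{wpsi}, namely ${\bf v} = -2\I\,\partial\psi/\partial\bar z$, to rewrite the convective term $({\bf v}\cdot\nabla){\bf v}$. The key identity here is the vector-calculus one $({\bf v}\cdot\nabla){\bf v} = \nabla(\tfrac12|{\bf v}|^2) - {\bf v}\times(\nabla\times{\bf v})$, which in two dimensions, with vorticity $\omega = -\Delta\psi$, becomes $({\bf v}\cdot\nabla){\bf v} = \nabla(\tfrac12|{\bf v}|^2) + \omega\,\nabla^\perp\psi$ (up to the sign conventions fixed in \eqref{wpsi}); note that $\omega\,{\bf v}^\perp$ is, up to a factor, exactly the quantity $h = \tfrac12(\Delta\psi)\nabla\psi$ from \eqref{h}. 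So Euler's equation \eqref{Euler} (with $\rho=1$) reads $-\nabla p = \partial{\bf v}/\partial t + \nabla(\tfrac12|{\bf v}|^2) - 2h$, i.e.\ $\nabla\big(p + \tfrac12|{\bf v}|^2\big) + \partial{\bf v}/\partial t = 2h$ after sorting signs. The term $\partial{\bf v}/\partial t = -2\I\,\partial/\partial\bar z(\partial\psi/\partial t)$ is, interpreting the gradient complex-analytically via $\nabla = 2\,\partial/\partial\bar z$ acting on real functions, exactly $2\,\partial/\partial\bar z$ of $-\I\,\partial\psi/\partial t$. Hence the whole left-hand side is $2\,\partial/\partial\bar z$ applied to $\tfrac12|{\bf v}|^2 + p - \I\,\partial\psi/\partial t = f$ (using \eqref{f}), and the identity $\partial f/\partial\bar z = h$ drops out. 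The last expression for $h$ in \eqref{h}, $h = 2\,\partial/\partial z(\partial\psi/\partial\bar z)^2$, is a routine Wirtinger computation I'd record for use in the integral form.

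For the equivalent integral statement \eqref{weak Euler}, I would apply Stokes' theorem in the complex/wedge form recalled in Section~\ref{sec:notations}: for a $1$-form $\omega = P\,dz + Q\,d\bar z$ one has $\int_{\partial D}\omega = \int_D d\omega = \int_D(\partial P/\partial\bar z - \partial Q/\partial z)\,d\bar z\wedge dz$. Taking $P = f$ and $Q = 2(\partial\psi/\partial\bar z)^2$, the integrand becomes $\partial f/\partial\bar z - 2\,\partial/\partial z(\partial\psi/\partial\bar z)^2 = \partial f/\partial\bar z - h$ by the third equality in \eqref{h}. So \eqref{weak Euler} holding for every subregion $D$ is equivalent, by the du Bois-Reymond lemma (vanishing of a continuous integrand whose integral over all subregions vanishes), to the pointwise identity \eqref{dfh}; conversely \eqref{dfh} makes the integrand identically zero. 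This direction is essentially bookkeeping once the differential form is identified.

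The main obstacle is purely a matter of signs and normalization conventions: the paper's stream-function convention \eqref{wpsi} carries factors of $-2\I$, vorticity is $\omega = -\Delta\psi = -4\,\partial^2\psi/\partial z\partial\bar z$, "$\nabla$" is being silently identified with $2\,\partial/\partial\bar z$ on real functions and with a complex number on the output side, and $f$ carries a free imaginary additive constant. Getting the factor exactly right in the term $2h$ versus $h$ — i.e.\ checking that $-{\bf v}\times\omega$ really reproduces $\tfrac12(\Delta\psi)\nabla\psi$ and not twice or half of it — is where care is needed; I would verify it on the explicit single-vortex / radial-profile example, or simply by expanding $({\bf v}\cdot\nabla){\bf v}$ directly in Wirtinger derivatives without invoking the vector identity, which side-steps any ambiguity in the cross-product convention in $2$D. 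Everything else (Stokes, the du Bois-Reymond step, the Wirtinger rewriting of $h$) is standard.
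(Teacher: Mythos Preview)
Your proposal is correct and essentially matches the paper's argument. The paper opts for the direct Wirtinger expansion of $({\bf v}\cdot\nabla){\bf v}$ in complex notation (your stated fallback) rather than the Lamb-vector identity, and for \eqref{weak Euler} it applies Stokes' formula to the two terms $\int_D h\,dxdy$ and $\int_D \partial f/\partial\bar z\,dxdy$ separately rather than to the combined one-form, but these are cosmetic differences.
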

Equation (\ref{dfh}) shows that $f$ is {analytic in regions free of vorticity}. In the stationary case, i.e. when $\partial{\psi}/\partial t=0$,
$f$ is real-valued and therefore necessarily constant in such regions. This gives Bernoulli's equation (\ref{Bernoulli}) again.
The equation (\ref{weak Euler}) will be useful when passing to the point vortex limit.

\begin{proof}
Euler's equation (\ref{Euler}) with $\rho=1$ becomes in complex variable notations, 
and in terms of the stream function $\psi$ (see (\ref{wpsi})), 
$$
-2\frac{\partial p}{\partial \bar{z}}=-2\I \frac{\partial^2\psi}{\partial t\partial \bar{z}}
+\re\Big(-2\I\frac{\partial\psi}{\partial \bar{z}}\cdot\overline{2\frac{\partial}{\partial \bar{z}}}\Big)(-2\I\frac{\partial \psi}{\partial \bar{z}}).
$$
By straightforward computations, using 
$$
f=2\frac{\partial \psi}{\partial z}\frac{\partial \psi}{\partial \bar{z}}+p-\I \frac{\partial \psi}{\partial t},
\quad h=4\frac{\partial^2 \psi}{\partial z\partial \bar{z}}\frac{\partial \psi}{\partial \bar{z}},
$$
as in (\ref{f}), (\ref{h}), this gives (\ref{dfh}) right away. 

Next, integrating $h$ over some flow region $D$ 
and using Stokes' formula in complex variable form gives
$$
\int_D h dx dy=\I\int_D \frac{\partial }{\partial z}\Big(\frac{\partial \psi}{\partial \bar{z}}\Big)^2 dz\wedge d\bar{z}
=\I\int_{\partial D}\Big(\frac{\partial \psi}{\partial \bar{z}}\Big)^2d\bar{z}.
$$
On the other hand, again by Stokes,
$$ 
\int_D \frac{\partial f}{\partial \bar{z}} dx dy=\frac{1}{2\I}\int_D\frac{\partial f}{\partial \bar{z}}d\bar{z}\wedge dz
=\frac{1}{2\I}\int_Dd(fdz)=\frac{1}{2\I}\int_{\partial D}fdz.
$$
Now (\ref{weak Euler}) follows.
\end{proof}

Proposition~\ref{prop:dfh} gives a handy way for computing the speed of an isolated point vortex
without introducing artificial regularizations. One only has to accept that the dynamics (\ref{weak Euler}),
derived in the case of smooth data, remains valid in the point vortex limit.

So let the flow have  a point vortex, of strength $\Gamma$ at a point $a=a(t)$, and let $D$
be a small neighborhood of $a$ containing no further vorticity. The expansion (\ref{psi}) then gives
\begin{equation}\label{dpsidt} 
\frac{\partial\psi(z,t)}{\partial t}
=\re\frac{\Gamma}{2\pi} \Big(\frac{d{a}/dt}{z-a}+ {\rm regular\ terms}\Big).
\end{equation}
Since $\partial{\psi}/\partial t$ is minus the imaginary part of $f$, and $f$ is analytic in $D\setminus\{a\}$,
it follows that
$$
f(z,t)=\frac{\Gamma}{2\pi\I} \Big(\frac{d{a}/dt}{z-a}+ {\rm regular\ terms}\Big).
$$
Hence
\begin{equation}\label{intfdz}
\int_{\partial D}fdz=\Gamma\,\frac{d a}{dt}.
\end{equation}
On the other hand, the expansion (\ref{w2}) shows that
\begin{equation}\label{intdpsisquare}
2\int_{\partial D}\big(\frac{\partial \psi}{\partial \bar{z}}\big)^2d\bar{z}
=-\frac{\Gamma^2}{2\pi\I}\,\overline{h_1(a)}.
\end{equation}
Thus (\ref{weak Euler}) gives the following well-known dynamics.
\begin{proposition}
An isolated free vortex $a=a(t)$ in a fluid governed by Euler's equation
moves according to 
\begin{equation}\label{dynamics}
 \frac{da}{d t}=\frac{\Gamma}{2\pi\I}\,\overline{h_1(a)}.
\end{equation}
\end{proposition}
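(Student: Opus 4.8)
Granted the material already in place, the proof is short. Apply equation~(\ref{weak Euler}) of Proposition~\ref{prop:dfh} to a small disk $D=D(t)$ centered at the moving vortex $a(t)$ and containing no other vorticity; it reads $\int_{\partial D}f\,dz + 2\int_{\partial D}(\partial\psi/\partial\bar z)^2\,d\bar z = 0$. The two integrals have been evaluated in (\ref{intfdz}) and (\ref{intdpsisquare}) as $\Gamma\,\dot a$ and $-\frac{\Gamma^2}{2\pi\I}\,\overline{h_1(a)}$, so the identity becomes $\Gamma\,\dot a-\frac{\Gamma^2}{2\pi\I}\,\overline{h_1(a)}=0$, and cancelling $\Gamma$ gives (\ref{dynamics}). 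The substance of the argument lies in those two evaluations and in the legitimacy of applying (\ref{weak Euler}) around a singular vortex.

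For completeness I would recall the two residue computations. Differentiating the expansion (\ref{psi}) in $t$ --- with $a$ and the $h_j$ now time-dependent --- shows, as in (\ref{dpsidt}), that $\partial\psi/\partial t=\frac{\Gamma}{2\pi}\re\frac{\dot a}{z-a}+(\text{regular})$ near $a$. Since $f$ is analytic on $D\setminus\{a\}$ by Proposition~\ref{prop:dfh} and, by (\ref{f}), $\im f=-\partial\psi/\partial t$, and since the map $w\mapsto\im\frac{w}{z-a}$ is injective on $\C$, it follows that $f=\frac{\Gamma}{2\pi\I}\frac{\dot a}{z-a}+(\text{regular})$, hence $\int_{\partial D}f\,dz=\Gamma\,\dot a$. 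For the other integral, $\psi$ being real gives $\partial\psi/\partial\bar z=\overline{\partial\psi/\partial z}$, so by (\ref{w})--(\ref{w2}) the form $(\partial\psi/\partial\bar z)^2\,d\bar z$ has near $a$ the principal part $\frac{\Gamma^2}{16\pi^2}\big(\frac{1}{(\bar z-\bar a)^2}-\frac{2\overline{h_1(a)}}{\bar z-\bar a}\big)\,d\bar z$; the double-pole term integrates to zero and $\oint_{\partial D}\frac{d\bar z}{\bar z-\bar a}=\overline{\oint_{\partial D}\frac{dz}{z-a}}=-2\pi\I$, which gives (\ref{intdpsisquare}).

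The step I expect to be the main obstacle is not any of these computations but the transition to the point-vortex limit: Proposition~\ref{prop:dfh} establishes (\ref{weak Euler}) for smooth vorticity, whereas here $\partial D$ encircles a genuine singularity of $\psi$. The natural justification is to regularize the vortex as a small smooth blob of total strength $\Gamma$, apply (\ref{weak Euler}) on the vorticity-free annulus between $\partial D$ and the blob, and let the blob concentrate; the annular region then contributes nothing and the inner-boundary terms converge to the residue expressions above. Turning this into a rigorous argument would require uniform control of the velocity field near the concentrating blob, so the exposition instead simply posits that (\ref{weak Euler}) survives the limit --- and with that granted, the proof is the bookkeeping just described.
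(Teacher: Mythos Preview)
Your proof is correct and follows essentially the same route as the paper: derive (\ref{intfdz}) from the pole structure of $f$ (read off from $\im f=-\partial\psi/\partial t$ and the expansion (\ref{dpsidt})), derive (\ref{intdpsisquare}) from (\ref{w2}), and substitute both into (\ref{weak Euler}). Your identification of the one genuine gap --- that (\ref{weak Euler}) was proved only for smooth vorticity and must be \emph{postulated} to survive the point-vortex limit --- matches exactly what the paper itself concedes in the paragraph preceding (\ref{dpsidt}).
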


This equation can be generalized to the case of a  vortex which is not free, but is subject to an external force.
To this purpose we make a more detailed analysis based on taking $\partial D$ to be an 
instantaneous stream line. This means that  
$d\psi=0$ along $\partial D$ at a given moment of time. Recalling (\ref{f}) we can then rewrite the first term in (\ref{weak Euler}) as
$$
\int_{\partial D}fdz
=\int_{\partial D}2\frac{\partial \psi}{\partial {z}}
\frac{\partial \psi}{\partial \bar{z}}dz-2
\Big(\frac{\partial \psi}{\partial z}dz+\frac{\partial \psi}{\partial \bar{z}}d\bar{z}\Big)\frac{\partial \psi}{\partial \bar{z}}
+p dz-\I\frac{\partial \psi}{\partial t}dz
$$
$$
=-\int_{\partial D}2\big(\frac{\partial \psi}{\partial \bar{z}}\big)^2d\bar{z}+\int_{\partial D}(p-\I\frac{\partial \psi}{\partial t})dz.
$$
Taking (\ref{intdpsisquare}) into account this gives
\begin{equation}\label{three terms}
\int_{\partial D}fdz=\frac{\Gamma^2}{2\pi\I}\overline{h_1(a)}+\int_{\partial D}(p-\I\frac{\partial \psi}{\partial t})dz.
\end{equation} 

For the final term in (\ref{three terms}) one has
\begin{equation}\label{dotpsi}
\int_{\partial D}\frac{\partial\psi}{\partial t}dz=0,
\end{equation}
as a consequence of $\partial D$ being a stream line. This can be easily proved in the case of 
single vortex in a bounded domain. In that case one can simply take $D$ to be the entire domain, and since 
then $\psi$ is constant on each component of $\partial D$ we have
$$
\int_{\partial D}\psi dz=0.
$$
Taking the time derivative then gives (\ref{dotpsi}). 

It follows that
\begin{equation}\label{ppsiF}
\int_{\partial D} (p -\I\frac{\partial \psi}{\partial t}) dz 
=-\I \int_{\partial D}p\cdot \I dz=-\I {\bf F}_a,
\end{equation}
where ${\bf F}_a$ is the force on $\partial D$ exerted by the surrounding fluid, see Section~\ref{sec:Bernoulli}.
This force is zero in the case of a free vortex, but we are now assuming that the vortex is subject to an externally
prescribed motion. In this case the equations (\ref{intfdz}), (\ref{three terms}), (\ref{ppsiF}) give the 
following combined dynamics, as a common generalization of (\ref{force}) and (\ref{dynamics}).
Compare more detailed investigations in \cite{Grotta-Koiller-Oliva-1994} for vortices with mass.

\begin{proposition}\label{prop:combined dynmaics}
If an isolated vortex of strength $\Gamma$ is subject to an external force ${\bf F}_a$, then the dynamics is
\begin{equation}\label{Gammadadt}
\Gamma\frac{da}{dt}=\frac{\Gamma^2}{2\pi\I}\overline{h_1(a)}-\I{\bf F}_a. 
\end{equation}
\end{proposition}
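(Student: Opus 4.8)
The plan is to obtain the proposition by combining the three identities already established just above the statement, namely (\ref{intfdz}), (\ref{three terms}) and (\ref{ppsiF}); the genuine analytic content (the weak form (\ref{weak Euler}) of Euler's equation, the pole structure of $f$, and the stream line computations) has already been extracted in deriving those, so that what remains is essentially bookkeeping together with a check that (\ref{dotpsi}) survives in the forced setting. First I would fix a small region $D$ whose boundary $\partial D$ is an \emph{instantaneous} stream line around $a=a(t)$, enclosing no vorticity other than the vortex itself, so that $d\psi=0$ on $\partial D$ at the chosen moment of time, and no further vorticity sits in $D$.

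With this choice, the decomposition (\ref{three terms}) gives
\[
\int_{\partial D}fdz=\frac{\Gamma^2}{2\pi\I}\overline{h_1(a)}+\int_{\partial D}\Big(p-\I\frac{\partial\psi}{\partial t}\Big)dz,
\]
which is pure algebra from the definition (\ref{f}) of $f$ once $\frac{\partial\psi}{\partial z}dz=-\frac{\partial\psi}{\partial\bar z}d\bar z$ on $\partial D$ is used, combined with the pole computation (\ref{intdpsisquare}). The left-hand side is evaluated from the principal part of $f$: since $\partial\psi/\partial t=-\im f$, since $f$ is analytic in $D\setminus\{a\}$ by (\ref{dfh}), and since the singular part of $\partial\psi/\partial t$ is known from (\ref{dpsidt}), one is forced to $f=\frac{\Gamma}{2\pi\I}\frac{da/dt}{z-a}+{\rm regular\ terms}$, hence $\int_{\partial D}fdz=\Gamma\,\frac{da}{dt}$, which is (\ref{intfdz}). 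For the remaining integral one invokes (\ref{dotpsi}), $\int_{\partial D}(\partial\psi/\partial t)\,dz=0$, so it collapses to $\int_{\partial D}p\,dz=-\I\int_{\partial D}p\cdot\I\,dz=-\I{\bf F}_a$, with ${\bf F}_a$ the force acting on $\partial D$ as in Section~\ref{sec:Bernoulli}; this is (\ref{ppsiF}). Substituting these two evaluations into the displayed identity yields $\Gamma\,\frac{da}{dt}=\frac{\Gamma^2}{2\pi\I}\overline{h_1(a)}-\I{\bf F}_a$, as claimed. As consistency checks, ${\bf F}_a=0$ recovers the free dynamics (\ref{dynamics}), and $da/dt=0$ recovers the bound-vortex force law (\ref{force}).

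The step I expect to demand the most care is the justification of (\ref{dotpsi}) at the level of generality used here. In the text it is proved only for a single vortex in a bounded domain, by taking $D$ to be the whole domain and differentiating $\int_{\partial D}\psi\,dz=0$ in time; for a vortex whose motion is externally prescribed one must instead work with a small time-varying neighborhood of $a(t)$ whose boundary is a stream line only at the chosen instant, and then distinguish carefully between the partial derivative $\partial\psi/\partial t$ taken at fixed $z$ and the motion of $\partial D$ itself. The cleanest route is to write $\partial\psi/\partial t=-\im f$ via (\ref{f}), to note that on $\partial D$ the function $\psi$ is (locally) constant at the given time, and to observe that the pole of $f$ has already been separated off into the $\frac{\Gamma^2}{2\pi\I}\overline{h_1(a)}$ term, so that what remains is a contour integral of a quantity with vanishing residue. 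A secondary, more methodological point, stressed by the author throughout Section~\ref{sec:moving vortices}, is that one simply accepts the validity of the weak dynamics (\ref{weak Euler}) in the point-vortex limit, without introducing any further regularization; the proposition is to be read as a statement about that limiting model, and with that convention the computation above is complete.
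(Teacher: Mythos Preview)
Your proposal is correct and follows essentially the same route as the paper: the proposition is obtained by combining (\ref{intfdz}), (\ref{three terms}) and (\ref{ppsiF}), with (\ref{dotpsi}) used to reduce the last integral to the pressure term. Your caveat about the generality of (\ref{dotpsi}) is well placed and matches the paper's own acknowledgment that it is proved only in the case of a single vortex in a bounded domain.
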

  
\begin{remark}\label{rem:Phi}
In a flow with only isolated point vortices there is a flow potential $\varphi$ and a complex potential $\Phi$,
both of them multi-valued, so that
\begin{equation}\label{Phipsi}
\Phi=\varphi +\I \psi
\end{equation}
is (multi-valued) analytic (see \cite{Marchioro-Pulvirenti-1994}).
Near a point vortex of strength $\Gamma$ we then have, by (\ref{wpsi}),
$$
\Phi(z)=\frac{\Gamma}{2\pi\I}\log (z-a)+\text{regular}.
$$
On comparison with $f(z)$ in (\ref{f}), which also is analytic, it follows that
\begin{equation}\label{Phi}
f=-\frac{\partial \Phi}{\partial t}.
\end{equation}
This gives the interpretation, in the point vortex case, of the function $f$ as being minus the time derivative of the complex potential.
\end{remark}


\subsection{The transfinite diameter.}\label{sec:transfinite diameter}

In function theoretic contexts it is of interest to take the vortex strengths (or charges, in an electrostatic picture) to be
$$
\Gamma_1=\dots =\Gamma_n=\frac{1}{n},
$$  
sometimes extended with an additional vortex of strength $\Gamma=-1$. If such an additional vortex
is not specified explicitly it will appear automatically at the point of infinity on the Riemann sphere.
On confining $z_1,\dots, z_n$ to belong to a given compact set $K\subset \C$, the geometric quantity
\begin{align}\label{deltan}
\delta_n(K)&=\max_{z_1,\dots,z_n\in K}\exp\big(-4\pi E(z_1,\dots, z_n)\big)\\
&=\max_{z_1,\dots,z_n\in K}\Big(\Pi_{j<k}|z_j-z_k|\Big)^{2/n^2}
\end{align}
(recall (\ref{energy})) leads in the limit $n\to\infty$ to the concept of transfinite diameter:
\begin{definition}
The {\it transfinite diameter} of a compact set  $K$ is
\begin{equation}\label{delta}
\delta(K)=\lim_{n\to \infty}\delta_n(K)=\inf_n \delta_n(K).
\end{equation}
\end{definition}
 
By (\ref{F}) equal vortices repel each other, like charges. With the confinement to the
compact set $K$ it is actually more natural to think of charges rather than of vortices, and
then consider $K$ to be a perfect conductor in which charges move without resistance. 
Then the minimization of the energy implicit in (\ref{deltan}), (\ref{delta}) means that the charges will rush to the boundary
of $K$ and there represent the {\it equilibrium distribution}. This limiting distribution represents a 
{\it probability measure} $\varepsilon_K$ on $K$ (actually on $\partial K$) which is of substantial potential theoretic significance.
The field it produces, via the Newton type law in (\ref{F}), vanishes in the interior of $K$, and the corresponding
potential is constant on $K$. 
Indeed, if it were not constant a smaller minimum could be reached by moving charges to places where the potential is smaller.

We see from (\ref{deltan}), (\ref{delta}) that the limiting energy $E_K=E(\varepsilon_K)$
of the equilibrium distribution relates to the transfinite diameter as
\begin{equation}\label{deltaEK}
\delta (K)=e^{-4\pi E_K}.
\end{equation}

If we move the point $a=\infty$, implicit in the above discussion, to become a finite point $a\in\C\setminus K$,
then the energy of a condensor consisting of  $K_0=\{a\}$ and $K_1=K$ involves, before passing to the limit, the expression 
\begin{equation*}
E(a; z_1,\dots, z_n)= \frac{1}{2\pi}\Big({\sum_{1\leq j<k\leq n}} \frac{1}{n^2}\log \frac{1}{|z_j-z_k|}
- \sum_{j=1}^n \frac{1}{n}\log \frac{1}{|z_j-a|}\Big)
\end{equation*}
\begin{equation}\label{Eloglog}
=\frac{1}{4\pi n^2}\Big(\sum_{j,k=1,j\ne k}^n \log \frac{|z_j-a||z_k-a|}{|z_j-z_k|}+\sum_{j=1}^n\log |z_j-a|^2\Big).
\end{equation}
This accounts for a single charge of strength $-1$ on $K_0$ and $n$ movable charges of strengths $+1/n$ on $K_1$. 
There is a corresponding transfinite diameter,
$$
\delta(K,a)=\lim_{n\to\infty}\max_{z_1,\dots,z_n\in K}\exp\big(-4\pi E(a;z_1,\dots,z_n)\big),
$$
again related to the limiting energy $E_{K,a}=E(\varepsilon_{K,a})$ by
\begin{equation}\label{deltaKaEKa}
\delta(K,a)=e^{-4\pi E_{K,a}}.
\end{equation}

As $n\to \infty$ the second term in (\ref{Eloglog}) becomes negligible compared to the corresponding part of the first term.    
And in both cases, $\delta(K)$ and $\delta(K,a)$, it is natural to adjust the dependence on $n$ (before passing to the
limit and without affecting the limit itself) to arrive at the following standard definitions.

\begin{definition}\label{def:transfinite diameter}
The {\it transfinite diameter} of a compact set $K\subset\C$, with respect to the point of infinity, respectively
a finite point $a\in\C\setminus K$, is 
$$
\delta(K)=\delta(K,\infty)=\lim_{n\to \infty}\max_{z_1,\dots,z_n\in K}\Big(\Pi_{j<k}|z_j-z_k|\Big)^{2/n(n-1)},
$$
$$
\delta(K,a)=\lim_{n\to\infty}\max_{z_1,\dots,z_n\in K}\Big(\Pi_{j<k}\frac{|z_j-z_k|}{|z_j-a||z_k-a|}\Big)^{2/n(n-1)}.
$$
\end{definition}

\begin{remark}
The close connection between energy and transfinite diameter depends on
the nice feature of two dimensional potential theory that when potentials are added
the results collect into elementary analytic expressions, essentially logarithms of rational functions. 
On the most basic level we have, 
$$
\log|z-a|-\log|z-b|=\log|\frac{z-a}{z-b}|.
$$
In general terms, this is a consequence of harmonic functions in two dimensions being just real parts of analytic functions,
and for analytic functions all kinds of algebraic operations apply. {Function theoretic aspects} of two dimensional potential theory are much 
discussed in \cite{Tsuji-1975, Ransford-1995}. The transfinite diameter was first introduced by {Fekete} \cite{Fekete-1923} in a context of 
{\it polynomial approximation}, a subject for which potential theory plays an important role.

See Figures~\ref{fig:Fekete1}, \ref{fig:Fekete2} for illustrations of {\it Fekete points}, finite approximations of configurations for the transfinite diameter 
and equilibrium distribution. 
\end{remark}


\begin{figure}
\begin{center}
\includegraphics[scale=0.7]{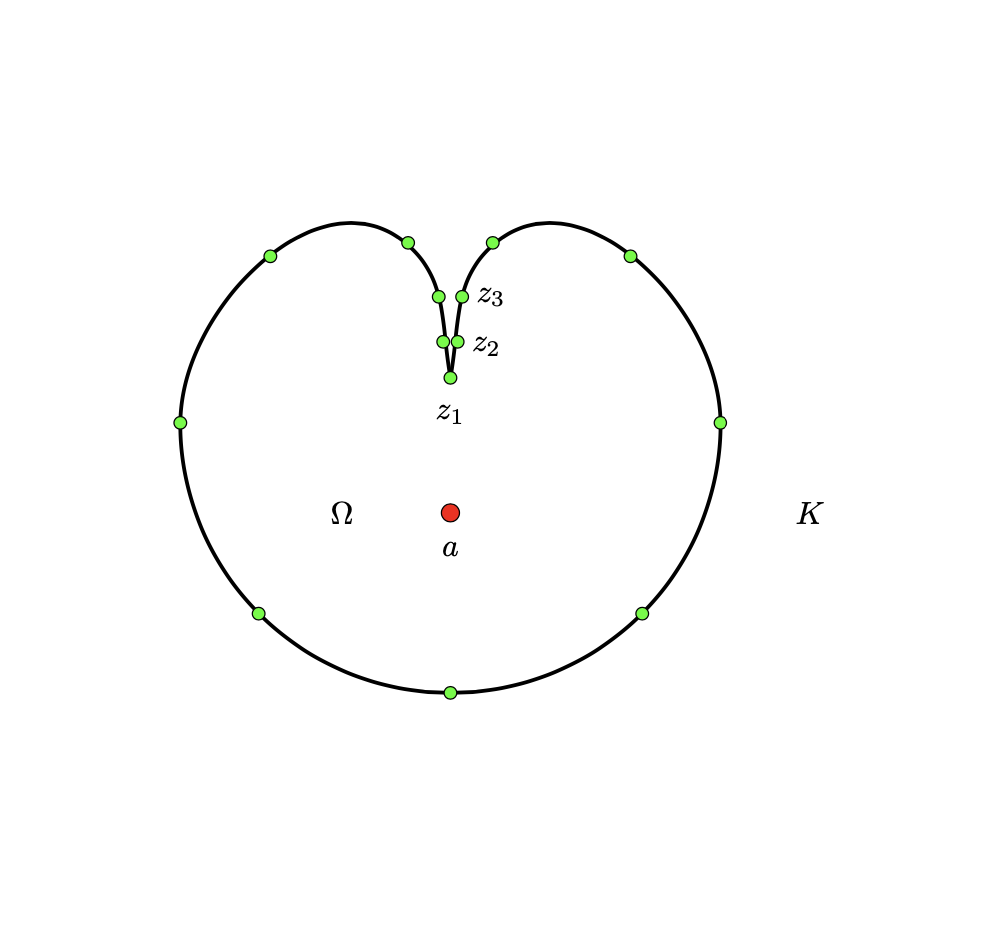}
\end{center}
\caption{Fekete points $z_1,z_2,z_3,\dots$ clustering near an inward cusp relative to $\Omega$. 
In the limit they represent the equilibrium distribution which minimizes the energy $E_{K,a}$. Brownian motion started
at $a$ has a high probability of first reaching the boundary close to the cusp.}
\label{fig:Fekete1}
\end{figure}


\begin{figure}
\begin{center}
\includegraphics[scale=0.6]{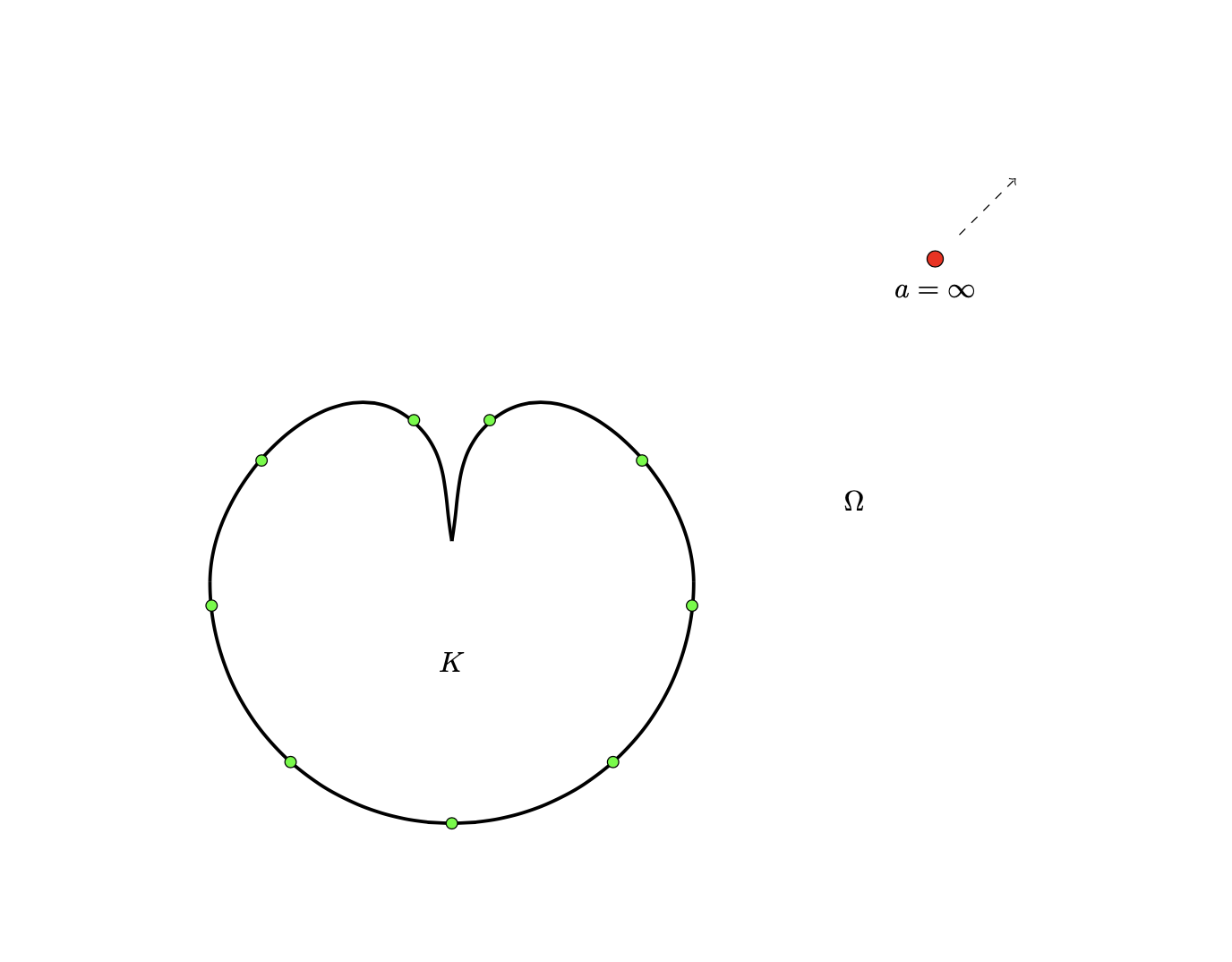} 
\end{center}
\caption{Fekete points in reverse geometry, where cusp goes into $K$.}
\label{fig:Fekete2}
\end{figure}



\subsection{Equilibrium potential and Robin's constant.}\label{sec:equilibrium potential}

Both in the vortex picture and in the electrostatic picture, with $K$ a compact set as above, 
it is really in the external region $\Omega=\C\setminus K$
that the flow and electric fields live and are active. In $K$ the potentials stay at constant values
(possibly different constants on different components).

The potential of the equilibrium measure
$\varepsilon_K$, namely the  {\it equilibrium potential} 
\begin{equation}\label{epsilonK}
V_K(z)=\frac{1}{2\pi} \int_K \log \frac{1}{|z-w|} d\varepsilon_K(w),
\end{equation}
is harmonic in $\Omega$ and has the asymptotic behavior
$$
V_K(z)=-\frac{1}{2\pi}\log |z|+\mathcal{O}(|z|^{-1}) \quad \text{as}\,\,|z|\to\infty.
$$
Note the absence of a constant term. What could have been a constant term pops up instead in the boundary 
behavior, and constant value on $K$, essentially the Robin constant:
\begin{definition}\label{def:robin}
The {\it Robin constant} $\gamma=\gamma_K$ for the compact set $K$ is given by
\begin{equation}\label{VK}
V_K(z)= {\rm constant}=\frac{\gamma_K}{2\pi} \quad \text{on}\,\, K.
\end{equation}
\end{definition}

The function 
\begin{equation}\label{external Green}
G(z,\infty)=\frac{\gamma_K}{2\pi}-V_K(z)=\frac{1}{2\pi}\Big(\log |z|+\gamma_K+\mathcal{O}(|z|^{-1}\Big) \quad \text{as}\,\,|z|\to\infty
\end{equation}
then is harmonic in $\C\setminus K$ and has zero boundary values on $\partial K$. In view of its behavior at infinity
it is exactly the {Green function} of $\Omega= (\C\cup\{\infty\})\setminus K$ with pole at infinity. 
In fact, changing the coordinate to $w=1/z$, the behavior  becomes (keeping $G$, as for notation)
$$
{G}(w,0)= -\frac{1}{2\pi}\log |w|+\text{harmonic}, \quad G(w,0)=0 \,\, \text{on }\,\partial\Omega, 
$$
which is exactly what is required by a Green function, see Section~\ref{sec:Green and Robin} below.
By the maximum principle, $G\geq 0$ in $\Omega$, so the equilibrium potential satisfies
$V_K\leq \gamma_K/2\pi$ everywhere, $V_K=\gamma_K/2\pi$ on $K$.

The Robin constant $\gamma_K$ is related to the transfinite diameter as follows.
\begin{lemma}\label{lem:gammadelta}
$$
e^{-\gamma_K}=\delta(K)=\lim_{n\to \infty}\max_{z_1,\dots,z_n\in K}\Big(\Pi_{j<k}|z_j-z_k|\Big)^{2/n(n-1)}.
$$ 
\end{lemma}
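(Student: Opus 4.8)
The plan is to prove the identity $e^{-\gamma_K}=\delta(K)$ in two matching inequalities, exploiting the link between the discrete energy $E(z_1,\dots,z_n)$ with equal charges $1/n$ and the continuous equilibrium energy $E_K=E(\varepsilon_K)$. First I would recall from \eqref{deltan} and \eqref{delta} that
$$
\delta(K)=\lim_{n\to\infty}\max_{z_1,\dots,z_n\in K}\exp\big(-4\pi E(z_1,\dots,z_n)\big),
$$
so the claim is equivalent to $\gamma_K=4\pi E_K$, i.e. to $\delta(K)=e^{-4\pi E_K}$, which is exactly \eqref{deltaEK}. Thus it suffices to (i) identify $E_K$ with $\gamma_K/4\pi$, and (ii) justify the convergence of the maximized discrete energies to $E_K$.

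For (i): by definition $E_K=\frac{1}{4\pi}\iint\log\frac{1}{|z-w|}\,d\varepsilon_K(z)d\varepsilon_K(w)=\frac{1}{2}\int V_K(z)\,d\varepsilon_K(z)$, where $V_K$ is the equilibrium potential of \eqref{epsilonK}. Since $V_K\equiv\gamma_K/2\pi$ on $K$ by Definition~\ref{def:robin} and $\varepsilon_K$ is a probability measure supported on $K$, this integral is simply $\frac12\cdot\frac{\gamma_K}{2\pi}=\frac{\gamma_K}{4\pi}$, giving $4\pi E_K=\gamma_K$ and hence $\delta(K)=e^{-4\pi E_K}=e^{-\gamma_K}$ modulo step (ii).

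For (ii), the two-sided argument is the standard Fekete/Frostman comparison. For the \emph{lower bound} $\delta(K)\ge e^{-\gamma_K}$: take the optimal Fekete configuration $z_1^{(n)},\dots,z_n^{(n)}$ realizing $\delta_n(K)$; since the uniform measure $\mu_n=\frac1n\sum\delta_{z_j^{(n)}}$ has minimal discrete energy, compare it against $\varepsilon_K$ by sampling $n$ points independently from $\varepsilon_K$ and taking expectations, which shows $-4\pi E(z_1^{(n)},\dots,z_n^{(n)})\ge -4\pi E_K+o(1)$ (the off-diagonal restriction $j\ne k$ loses only a vanishing fraction), so $\delta_n(K)\ge e^{-\gamma_K-o(1)}$. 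For the \emph{upper bound} $\delta(K)\le e^{-\gamma_K}$: use the monotonicity $\delta(K)=\inf_n\delta_n(K)$ together with the fact that any weak-$*$ limit point $\mu$ of the Fekete measures $\mu_n$ is a probability measure on $K$ whose energy is $\le\liminf$ of the discrete energies; since $\varepsilon_K$ is the \emph{unique} minimizer of the energy functional and $E(\mu)\ge E_K$, lower semicontinuity of the energy under weak-$*$ convergence (truncating $\log\frac{1}{|z-w|}$ from below) forces $\liminf(-4\pi E_n)\le -4\pi E_K$, i.e. $\delta(K)\le e^{-\gamma_K}$. Combining the two bounds with step (i) yields the lemma, and the displayed formula for $\delta(K)$ is just Definition~\ref{def:transfinite diameter} with the exponent normalization $2/n(n-1)$ in place of $2/n^2$, which does not affect the limit.

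The main obstacle is the lower semicontinuity / uniqueness input in the upper bound: the kernel $\log\frac{1}{|z-w|}$ is not bounded below on a neighborhood of the diagonal, so one must either truncate the kernel and pass to the limit carefully, or invoke Frostman's theorem (uniqueness of the equilibrium measure and the characterization $V_K\le\gamma_K/2\pi$ with equality q.e.\ on $K$) as a black box. Everything else — the energy computation in (i), the sampling argument for the lower bound, and the passage from $2/n^2$ to $2/n(n-1)$ — is routine. If one is willing to cite the classical equivalence of transfinite diameter, Chebyshev constant and capacity (e.g.\ from \cite{Tsuji-1975, Ransford-1995}), the proof reduces to step (i) alone.
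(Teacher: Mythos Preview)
Your proposal is correct and follows essentially the same line as the paper. Both reduce the statement to the identity $4\pi E_K=\gamma_K$ and verify it by the computation $E_K=\tfrac12\int_K V_K\,d\varepsilon_K=\tfrac12\cdot\tfrac{\gamma_K}{2\pi}$ using that $V_K$ is constant on $K$; the only difference is that you sketch the Fekete--Frostman convergence argument for the passage from discrete to continuous minimization (your step (ii)), whereas the paper simply attributes this step to Frostman's thesis and the standard references, then presents the same energy computation informally after the proof.
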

\begin{proof}
The second equality is just the definition of the transfinite diameter, which is related to energy 
via (\ref{deltaEK}). Thus the first equality relates the energy $E_K$ of the equilibrium distribution $\varepsilon_K$
to the level $\gamma_K$ of  the equilibrium potential $V_K$, 
in fact it simply says that 
\begin{equation}\label{EKgammaK}
4\pi E_K=\gamma_K.
\end{equation}
That equality is actually far from trivial,
it is the main result in the much celebrated doctoral thesis \cite{Frostman-1935} of Otto Frostman,
in which the ideas of Gauss were made mathematically rigorous in a general setting. See \cite{Doob-1984}
for historical comments and complete mathematical details.
Some other sources are \cite{Sario-Oikawa-1969, Ahlfors-1973, Ransford-1995}.   
\end{proof}

However the idea for the mentioned equality is simple, and in fact implicit in our previous discussion.
First we introduce the notion of potential of a measure $\mu$.
\begin{definition} 
If $\mu$ is a, possibly signed, measure with compact support in the plane its  {\it (logarithmic) potential} is 
\begin{equation*}
U^\mu(z)=\frac{1}{2\pi}\int \log \frac{1}{|z-w|}d\mu(w) \quad (z\in \C).
\end{equation*} 
This means that that $-\Delta U^\mu=\mu$ in the sense of distributions.
\end{definition}
 
Now, to informally finish the previous proof,  we pass 
from the discrete case to the limiting equilibrium distribution $\varepsilon_K$, 
which is a probability measure the energy of which is obtained from (\ref{energy})
as
$$
E_K=\frac{1}{4\pi}\int_K\int_K \log\frac{1}{|z-w|}d\varepsilon_K(z)d\varepsilon_K(w)
=\frac{1}{2}\int_K U^{\varepsilon_K}d\varepsilon_K.
$$ 
Using (\ref{VK}) this becomes
\begin{equation}\label{EK}
E_K=\frac{1}{2}\int_K V_K d\varepsilon_K=\frac{\gamma_K}{4\pi},
\end{equation}
which is the desired equality (\ref{EKgammaK}).


\subsection{Green and Robin functions}\label{sec:Green and Robin}

Now we reverse the geometry, or at least allow it to be more general.
Let $\Omega$ be any domain in the Riemann sphere having a nontrivial (in terms of capacity)
complement $K$, and let $a\in\Omega$. The most straight-forward definition of the Green function
of $\Omega$ is the following. 
\begin{definition}\label{def:Green}
The {\it Green function} $G(\cdot,a)=G_\Omega (\cdot,a)$ 
with respect to $a$ is the unique solution of the Dirichlet problem
\begin{equation}\label{DeltaG}
-\Delta G (\cdot,a)=\delta_{a} \quad \text{in }\Omega,
\end{equation}
\begin{equation}\label{G0}
G(\cdot,a)=0 \quad \text{on }\partial\Omega.
\end{equation}
\end{definition}

In the case $a=\infty$ the above definition agrees with the one given in (\ref{external Green}).
It is a PDE definition which seemingly requires  $\partial\Omega$ to be reasonably regular.
But this can be relaxed (see below). 
Later, in Section~\ref{sec:electro-hydro}, the above Green function will be identified as the {\it electrostatic} Green function, 
to be distinguished
from a {\it hydrodynamic} Green function which, as for boundary conditions, is more adapted to problems in fluid dynamics. 

In potential theory one often prefers definitions that avoid
any regularity assumptions whatsoever, and on writing
\begin{equation}\label{GlogH}
G(z,a)=\frac{1}{2\pi}\big(- \log {|z-a|}+H(z,a)\big)
\end{equation}
one may to this purpose directly define $H(\cdot,a)$ as being the smallest superharmonic function 
(meaning that $-\Delta H(\cdot,a)\geq 0$) in $\Omega$ which satisfies $H(z,a)\geq \log|z-a|$ for $z\in\Omega$. 
See \cite{Doob-1984}. Or the other way around,  from below as a suitable supremum of subharmonic functions,
see \cite{Ahlfors-1973, Farkas-Kra-1992} for details.
With a definition of the latter type one can show that every {\it simply connected Riemann surface}, except the Riemann sphere
and the entire complex plane, admits a Green function with (\ref{G0}) holding in a certain weak sense. 
The exponential of (minus) the analytic completion
of that Green function, namely $f(z)=\exp{(-G(z,a)-\I G^*(z,a))}$, then defines a 
conformal map of the Riemann surface onto the unit disk, thus proving the famous {\it uniformization theorem} of
Koebe \cite{Koebe-1918}. 

Returning to (\ref{DeltaG}), (\ref{G0}) one may introduce an everywhere defined potential $V_{K,a}$ by setting
\begin{equation}\label{VGK}
V_{K,a}=
\begin{cases}
-G_\Omega (\cdot, a) \quad&\text{in }\Omega,\\
\quad 0 \quad &\text{on } K=(\C\cup\{\infty\})\setminus \Omega.
\end{cases}
\end{equation}
Compare  (\ref{external Green}).
The Laplacian of $V_{K,a}$ then gets a distributional contribution on $\partial \Omega$,
which is the equilibrium measure, or equilibrium distribution, in the present context:
\begin{definition}
The {\it equilibrium measure} $\varepsilon_{K,a}$ for the complement $K$ of a general domain $\Omega$ in the
Riemann sphere, and with respect to a point $a\in\Omega$, is the probability measure on $\partial \Omega$ arising via
\begin{equation}\label{DeltaVK}
-\Delta V_{K,a}= \varepsilon_{K,a}-\delta_a. 
\end{equation}
\end{definition}
Since $V_{K,a}$ vanishes on $K$ this equality says that $\varepsilon_{K,a}$ is the measure  obtained by balayage (``sweeping'')
of $\delta_a$ onto $\partial \Omega$ ($=\partial K$ if minor regularity is assumed).  See further Section~\ref{sec:balayage}. 
Thus we have 
\begin{equation}\label{VUU}
V_{K,a}= U^{\varepsilon_{K,a}}-U^{\delta_a}
\end{equation}
everywhere, and  $V_{K,a}=-G(\cdot,a)$ in $\Omega$.

Keeping the notations appearing in (\ref{psi}) we expand the regular part $H(z,a)$ (see (\ref{GlogH})) of the Green function
in a Taylor series as
$$
H(z,a)=h_0(a)+\frac{1}{2}h_1(a)(z-a)+\frac{1}{2}\overline{h_1(a)}(\bar{z}-\bar{a})+\mathcal{O}(|z-a|^2)
$$
\begin{equation}\label{TaylorH}
=\re \big(h_0(a)+h_1(a)(z-a)+\mathcal{O}((z-a)^2)\big).
\end{equation}
Clearly $h_0(a)=H(a,a)$, and in addition $h_1(a)=2\frac{\partial}{\partial z}|_{z=a}H(z,a)$.
We then have the following relations.
\begin{lemma}\label{lem:hgammadelta}
\begin{align*}\label{eh0delta}
e^{-h_0(a)}&=\delta(K,a),\\
 h_1(a)&=\frac{\partial h_0(a)}{\partial a}.
\end{align*}
\end{lemma}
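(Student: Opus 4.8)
The plan is to prove the two identities separately. The first, $e^{-h_0(a)}=\delta(K,a)$, is the natural analogue for a finite pole $a$ of Lemma~\ref{lem:gammadelta}, and I would run through exactly the same argument that there produced $4\pi E_K=\gamma_K$; the second, $h_1(a)=\partial h_0(a)/\partial a$, is a formal consequence of the symmetry $G_\Omega(z,a)=G_\Omega(a,z)$ of the Green function together with the Wirtinger chain rule.

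For the first identity, by (\ref{deltaKaEKa}) it suffices to show $4\pi E_{K,a}=h_0(a)$. First I would record the identity
\[
U^{\varepsilon_{K,a}}(z)=-\frac{1}{2\pi}H(z,a)\qquad(z\in\Omega),
\]
which drops out of comparing (\ref{VUU}) with the definition (\ref{VGK}) of $V_{K,a}$ and the splitting (\ref{GlogH}) of the Green function: in $\Omega$ both sides of $V_{K,a}=U^{\varepsilon_{K,a}}-U^{\delta_a}$ carry the term $\tfrac{1}{2\pi}\log|z-a|$, which cancels, leaving $U^{\varepsilon_{K,a}}=-\tfrac{1}{2\pi}H(\cdot,a)$; in particular $U^{\varepsilon_{K,a}}(a)=-h_0(a)/2\pi$ by (\ref{TaylorH}). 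Next, since $E_{K,a}=E(\varepsilon_{K,a})$ is the finite part of the condenser energy in (\ref{Eloglog}) (the last sum there being $O(1/n)$ per charge), and $\varepsilon_{K,a}$ is a probability measure,
\[
4\pi E_{K,a}=\iint\log\frac{1}{|z-w|}\,d\varepsilon_{K,a}(z)\,d\varepsilon_{K,a}(w)-2\int\log\frac{1}{|z-a|}\,d\varepsilon_{K,a}(z)=2\pi\int\bigl(U^{\varepsilon_{K,a}}-2U^{\delta_a}\bigr)\,d\varepsilon_{K,a}.
\]
By (\ref{VUU}) the integrand equals $V_{K,a}-U^{\delta_a}$, and since $V_{K,a}\equiv0$ on $K\supseteq\supp\varepsilon_{K,a}$ the first term contributes nothing; the reciprocity $\int U^{\delta_a}\,d\varepsilon_{K,a}=\int U^{\varepsilon_{K,a}}\,d\delta_a=U^{\varepsilon_{K,a}}(a)$ (Fubini) then gives $4\pi E_{K,a}=-2\pi U^{\varepsilon_{K,a}}(a)=h_0(a)$, hence $\delta(K,a)=e^{-4\pi E_{K,a}}=e^{-h_0(a)}$.

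For the second identity I would use the classical symmetry $G_\Omega(z,a)=G_\Omega(a,z)$ (a consequence of Green's second identity), which via (\ref{GlogH}) and $|z-a|=|a-z|$ forces $H(z,a)=H(a,z)$. Viewing $H$ as a real-valued function of the four independent variables $z,\bar z,a,\bar a$, this says $H$ is invariant under interchanging the pair $(z,\bar z)$ with $(a,\bar a)$. Differentiating $h_0(a)=H(a,a)$ by the Wirtinger chain rule, and using $\partial\bar a/\partial a=0$,
\[
\frac{\partial h_0}{\partial a}=\left.\frac{\partial H}{\partial z}\right|_{z=a}+\left.\frac{\partial H}{\partial a}\right|_{z=a};
\]
the symmetry makes the two terms on the right equal, and each equals $\tfrac12 h_1(a)$ by the definition $h_1(a)=2\,\partial_z H(z,a)|_{z=a}$ in (\ref{TaylorH}), so $\partial h_0/\partial a=h_1(a)$.

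I do not anticipate a serious analytic obstacle. For the first identity the only non-elementary input is Frostman's theorem (already invoked for Lemma~\ref{lem:gammadelta}), guaranteeing that the maximization defining $\delta(K,a)$ is solved by the equilibrium measure with limiting value $e^{-4\pi E_{K,a}}$; everything else is a rearrangement of logarithmic potentials and an application of Fubini, plus the bookkeeping needed to justify the reciprocity and the integrability of $U^{\delta_a}$ against $\varepsilon_{K,a}$ (legitimate since $\supp\varepsilon_{K,a}\subseteq\partial\Omega$ stays away from $a\in\Omega$). The one place that needs genuine care is the Wirtinger calculus in the second identity: one must keep track that $H$ depends on both the holomorphic and anti-holomorphic parts of $z$ and of $a$, so that the chain rule yields two equal contributions rather than one, and that the symmetry of $G_\Omega$ is exactly what makes them coincide.
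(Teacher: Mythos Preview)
Your proof is correct and follows essentially the same route as the paper: for the first identity you reduce to $4\pi E_{K,a}=h_0(a)$ via $H(z,a)=-2\pi U^{\varepsilon_{K,a}}(z)$, the vanishing of $V_{K,a}$ on $K$, and the Fubini reciprocity $\int U^{\delta_a}\,d\varepsilon_{K,a}=U^{\varepsilon_{K,a}}(a)$; for the second you use the symmetry $H(z,a)=H(a,z)$ and the Wirtinger chain rule to double the single partial derivative. The only difference is organizational: the paper expands the double integral $\frac{1}{4\pi}\iint\log\frac{|z-a||w-a|}{|z-w|}\,d\varepsilon\,d\varepsilon$ term by term, whereas you group the integrand as $V_{K,a}-U^{\delta_a}$ directly, which is a slightly cleaner bookkeeping but not a different idea.
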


The first equation says that $h_0(a)$ plays the same role, for a finite $a\in\Omega$, 
as that which $\gamma_K$ plays in Lemma~\ref{lem:gammadelta} when $a=\infty$. Thus we may set
$$
\gamma_K(a)=h_0(a).
$$
This quantity is the {\it Robin function} for $\Omega$. We shall however keep the complementary set
$K$ as for the notation. A more precise terminology would be to say that $\gamma_K(a)$ is a ``coordinate Robin function''.
Indeed, in a Riemann surface setting it depends on the choice of coordinates in such a way that 
\begin{equation}\label{dsegamma}
ds=e^{-\gamma_K(a)}|da|
\end{equation}
becomes an invariantly defined metric on $\Omega$. See more precisely Lemma~\ref{lem:transformation} below,
and Section~\ref{sec:Robin} for the natural extension to Riemann surfaces.

We now turn to the proof of the lemma.

\begin{proof}
In view of (\ref{deltaKaEKa}) the first equality amounts to the statement $h_0(a)=4\pi E_{K,a}$.
The proof of this is similar to that of Lemma~\ref{lem:gammadelta}, but we have to resort on the more complicated
expression (\ref{Eloglog}) for the energy. 
In that expression the last term can be discarded (like in Definition~\ref{def:transfinite diameter}).

First note that
$$
G(z,a)=U^{\delta_a}(z)-U^{\varepsilon_{K,a}}(z)=\frac{1}{2\pi}\log \frac{1}{|z-a|}-U^{\varepsilon_{K,a}}(z).
$$
This shows that the regular part $H(z,a)$ of the Green function, 
\begin{equation}\label{HU}
H(z,a)=-2\pi U^{\varepsilon_{K,a}}(z),
\end{equation}
is essentially the potential of the equilibrium distribution for a point charge at the point $a$. In particular,
$$
h_0(a)=-2\pi U^{\varepsilon_{K,a}}(a).
$$
Moreover, from Fubini we have the general symmetry $\int U^\mu d\nu=\int U^\nu d\mu$, by which
$$
\int_K U^{\delta_a}d\varepsilon_{K,a}
=U^{\varepsilon_{K,a}}(a).
$$ 

Starting from the continuous, and limiting, version of (\ref{Eloglog}) we now get
$$
E_{K,a}=\frac{1}{4\pi}\int_K\int_K \log \frac{|z-a||w-a|}{|z-w|}d\varepsilon_{K,a}(z)d\varepsilon_{K,a}(w)
$$
$$
=\frac{1}{4\pi}\int_K\int_K \log \frac{1}{|z-w|}d\varepsilon_{K,a}(z)d\varepsilon_{K,a}(w)-2\cdot \frac{1}{2} \int_KU^{\varepsilon_{K,a}}(a)d\varepsilon_{K,a}(w)
$$
$$
=\frac{1}{4\pi}\int_K\int_K \log \frac{1}{|z-w|}d\varepsilon_{K,a}(z)d\varepsilon_{K,a}(w)-U^{\varepsilon_{K,a}}(a)
$$
$$
=\frac{1}{2}\int_K U^{\varepsilon_{K,a}}d\varepsilon_{K,a}-U^{\varepsilon_{K,a}}(a)
=\frac{1}{2}\int_K (V_{K,a}+ U^{\delta_a})d\varepsilon_{K,a}-U^{\varepsilon_{K,a}}(a)
$$
$$
=\frac{1}{2}\int_K U^{\delta_a}d\varepsilon_{K,a}-U^{\varepsilon_{K,a}}(a)=\frac{1}{2}U^{\varepsilon_{K,a}}(a)-U^{\varepsilon_{K,a}}(a)
$$
$$
=-\frac{1}{2}U^{\varepsilon_{K,a}}(a)=\frac{1}{4\pi}h_0(a).
$$ 
Referring to the first line of the proof, this is exactly what we wanted to prove.

The second assertion of the lemma is immediate from the symmetry of $H(z,a)$:
\begin{equation}\label{h0H}
h_1(a)=2\frac{\partial}{\partial z}\big|_{z=a}H(z,a)=\frac{\partial}{\partial z}\big|_{z=a}H(z,z)=\frac{\partial}{\partial a}h_0(a).
\end{equation}
\end{proof}

For later use we observe that taking the $\partial/\partial \bar{a}$ derivative of (\ref{h0H}) gives
\begin{equation}\label{h00H}
\frac{\partial^2}{\partial\bar{a}\partial a}h_0(a)=2\{\frac{\partial^2}{\partial z\partial \bar{a}}H(z,a)\}\big|_{z=a}.
\end{equation}


\subsection{Behavior under conformal maps}\label{sec:conformal}

As already mentioned, see (\ref{dsegamma}), $ds=e^{-h_0(z)}|dz|$ is naturally considered as a metric on $\Omega$,
in fact a {\it conformally invariant} metric (this expressed as (\ref{ehzehw}) below). 
See in general \cite{Ahlfors-1973, Krantz-2004} for this concept.
\begin{definition}\label{def:Gaussioan curvature}
The {\it Gaussian curvature} of any metric $ds =\lambda(z)|dz|$ is
\begin{equation}\label{kappageneral}
\kappa=-\frac{\Delta \log \lambda}{\lambda^2}.
\end{equation}
When the metric is written on the form $ds=e^{-\gamma(z)}|dz|$ this becomes
\begin{equation}\label{kappa}
\kappa=e^{2 \gamma}\Delta \gamma.
\end{equation}
\end{definition}

The following proposition expresses some instances of conformal invariance in our context.
\begin{proposition}\label{lem:transformation}
Let $f:\tilde{\Omega}\to\Omega$ be a conformal map. Denoting by $\tilde{z}$ the complex variable in $\tilde{\Omega}$, by 
$z$ that in $\Omega$, and with a tilde on quantities referring to $\tilde{\Omega}$, we have
\begin{equation}\label{h0h0log}
h_0(z)=\tilde{h}_0(\tilde{z})+\log |f^\prime(\tilde{z})|, 
\end{equation}
equivalently
\begin{equation}\label{ehzehw}
e^{-h_0(z)}|dz|=e^{-\tilde{h}_0(\tilde{z})}|d\tilde{z}|.
\end{equation}
The curvature transforms as a function:
\begin{equation}\label{invcurvature}
\kappa (z)=\tilde{\kappa}(\tilde{z}),
\end{equation}
and the coefficient $h_1$ transforms as an affine connection:
\begin{equation}\label{h1}
h_1(z)f^\prime (\tilde{z})=\tilde{h}_1(\tilde{z})+\frac{f''(\tilde{z})}{2f^\prime(\tilde{z})}.
\end{equation}
\end{proposition}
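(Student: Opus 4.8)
The plan is to work entirely from the defining properties of the Green function under a conformal change of variable. If $f:\tilde\Omega\to\Omega$ is conformal and $a=f(\tilde a)$, then $\tilde G(\tilde z,\tilde a):=G_\Omega(f(\tilde z),f(\tilde a))$ is harmonic in $\tilde z$ on $\tilde\Omega\setminus\{\tilde a\}$, vanishes on $\partial\tilde\Omega$, and has a logarithmic singularity at $\tilde a$; hence by uniqueness $G_{\tilde\Omega}(\tilde z,\tilde a)=G_\Omega(f(\tilde z),f(\tilde a))$. This conformal invariance of the Green function is the one substantive input, and everything else is extracting Taylor coefficients from it. The only point requiring a little care is that the singularity is $-\frac{1}{2\pi}\log|z-a|$ in the $z$-coordinate but $-\frac{1}{2\pi}\log|\tilde z-\tilde a|$ in the $\tilde z$-coordinate, and these differ by $\frac{1}{2\pi}\log|f'(\tilde a)|$ to leading order; this mismatch is exactly what produces the inhomogeneous term in \eqref{h0h0log}.

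Concretely, I would write $2\pi G_\Omega(z,a)=-\log|z-a|+H(z,a)$ and $2\pi G_{\tilde\Omega}(\tilde z,\tilde a)=-\log|\tilde z-\tilde a|+\tilde H(\tilde z,\tilde a)$, substitute $z=f(\tilde z)$, $a=f(\tilde a)$ into the invariance identity, and use the expansion
$$
f(\tilde z)-f(\tilde a)=f'(\tilde a)(\tilde z-\tilde a)\Bigl(1+\frac{f''(\tilde a)}{2f'(\tilde a)}(\tilde z-\tilde a)+\mathcal O((\tilde z-\tilde a)^2)\Bigr).
$$
Taking $\log|\cdot|=\re\log(\cdot)$ of this gives
$$
\log|f(\tilde z)-f(\tilde a)|=\log|\tilde z-\tilde a|+\log|f'(\tilde a)|+\re\Bigl(\frac{f''(\tilde a)}{2f'(\tilde a)}(\tilde z-\tilde a)\Bigr)+\mathcal O(|\tilde z-\tilde a|^2).
$$
Comparing with the invariance identity $-\log|f(\tilde z)-f(\tilde a)|+H(f(\tilde z),f(\tilde a))=-\log|\tilde z-\tilde a|+\tilde H(\tilde z,\tilde a)$ yields
$$
\tilde H(\tilde z,\tilde a)=H(f(\tilde z),f(\tilde a))+\log|f'(\tilde a)|+\re\Bigl(\frac{f''(\tilde a)}{2f'(\tilde a)}(\tilde z-\tilde a)\Bigr)+\mathcal O(|\tilde z-\tilde a|^2).
$$
Now read off coefficients using the Taylor normalization \eqref{TaylorH}: the constant term ($\tilde z=\tilde a$) gives $\tilde h_0(\tilde a)=h_0(a)+\log|f'(\tilde a)|$, which is \eqref{h0h0log} (rearranged), and its exponentiated form is \eqref{ehzehw}. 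For the linear term, $\tilde h_1(\tilde a)$ is twice the coefficient of $(\tilde z-\tilde a)$ in the holomorphic part; on the right the term $H(f(\tilde z),f(\tilde a))$ contributes, by the chain rule, $2\,\partial_{\tilde z}|_{\tilde z=\tilde a}H(f(\tilde z),a)=h_1(a)f'(\tilde a)$ (using $h_1(a)=2\partial_z|_{z=a}H(z,a)$), while the explicit linear term contributes $\frac{f''(\tilde a)}{2f'(\tilde a)}$ after matching the $\re$-form; this gives exactly \eqref{h1}.

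For the curvature statement \eqref{invcurvature} I would invoke \eqref{ehzehw}, which says $\lambda(z)|dz|=\tilde\lambda(\tilde z)|d\tilde z|$ with $\lambda=e^{-h_0}$, i.e. $\tilde\lambda(\tilde z)=\lambda(f(\tilde z))|f'(\tilde z)|$; then apply the standard fact that the Gaussian curvature $\kappa=-\Delta\log\lambda/\lambda^2$ defined in \eqref{kappageneral} is a conformal invariant. This last is a one-line computation: $\log\tilde\lambda=\log\lambda\circ f+\log|f'|$, the second summand is harmonic (real part of the holomorphic $\log f'$) so $\Delta_{\tilde z}\log\tilde\lambda=|f'(\tilde z)|^2(\Delta_z\log\lambda)\circ f$, and dividing by $\tilde\lambda^2=|f'|^2(\lambda\circ f)^2$ cancels the $|f'|^2$, giving $\tilde\kappa(\tilde z)=\kappa(f(\tilde z))$.

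The main obstacle is really only bookkeeping: being careful that $h_1$ is \emph{twice} the holomorphic first-order Taylor coefficient (per \eqref{TaylorH}), so that the factor $\tfrac12$ in front of $f''/f'$ in \eqref{h1} comes out correctly, and tracking the $\re(\cdot)$ throughout so the real and holomorphic parts are matched consistently. There is no analytic difficulty; the conformal invariance of the Green function does all the heavy lifting, and it follows immediately from the uniqueness in Definition~\ref{def:Green} once one checks that composition with $f$ preserves harmonicity, the boundary condition, and the logarithmic singularity.
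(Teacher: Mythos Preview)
Your approach is exactly the one the paper sketches: use the conformal invariance $G_{\tilde\Omega}(\tilde z,\tilde a)=G_\Omega(f(\tilde z),f(\tilde a))$ of the Green function and read off Taylor coefficients near the diagonal. There is one sign slip to correct: from $-\log|f(\tilde z)-f(\tilde a)|+H=-\log|\tilde z-\tilde a|+\tilde H$ together with your expansion of $\log|f(\tilde z)-f(\tilde a)|$, the relation should be
\[
\tilde H(\tilde z,\tilde a)=H(f(\tilde z),f(\tilde a))-\log|f'(\tilde a)|-\re\Bigl(\tfrac{f''(\tilde a)}{2f'(\tilde a)}(\tilde z-\tilde a)\Bigr)+\mathcal O(|\tilde z-\tilde a|^2),
\]
with minus signs on the last two terms; this then yields $\tilde h_0(\tilde a)=h_0(a)-\log|f'(\tilde a)|$ and $\tilde h_1(\tilde a)=h_1(a)f'(\tilde a)-\tfrac{f''(\tilde a)}{2f'(\tilde a)}$, which are precisely \eqref{h0h0log} and \eqref{h1}.
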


\begin{proof}
The lemma is proved by straight-forward computations using that the Green function itself transforms as a function:
$G(z_1,z_2)=\tilde{G}(\tilde{z}_1,\tilde{z}_2)$  ($z_j=f(\tilde{z}_j)$).
\end{proof}

\begin{remark}\label{rem:connections}
Equations (\ref{h0h0log}) and (\ref{h1}) say that $h_0$ and $h_1$ transform as different kinds of connections. 
In the terminology of \cite{Kang-Makarov-2013}, $h_0$ is a {\it pre-pre-Schwarzian} form, and $h_1$ is a {\it pre-Schwarzian}
form, the latter governed by an affine connection. Besides these two, there exist also {\it Schwarzian} forms, 
governed by Schwarzian, or {\it projective}, connections. Possibly, also such connections are  relevant for vortex motion, if so
for describing motion of vortex dipoles (see brief discussions in \cite{Gustafsson-2019}).  In a certain sense there
are no other connections, mediating inhomogeneous transformation laws in a consistent way, besides the three mentioned types, 
see \cite{Gunning-1966, Gunning-1967}.
\end{remark}

A perhaps astonishing consequence of the above proposition is
\begin{corollary}\label{cor:curvature}
Any conformally invariant metric in a simply connected domain has constant curvature.
\end{corollary}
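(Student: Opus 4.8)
The plan is to reduce the statement to the unit disk via the uniformization theorem and then exploit the homogeneity of the disk. The two ingredients I would invoke are: (i) the defining feature of a conformally invariant metric $ds=\lambda(z)|dz|$, namely the transformation law (\ref{ehzehw}), which says that under any conformal map $f:\tilde\Omega\to\Omega$ the pulled-back metric $\lambda(f(\tilde z))|f'(\tilde z)|\,|d\tilde z|$ on $\tilde\Omega$ is again \emph{the} conformally invariant metric attached to $\tilde\Omega$; and (ii) the fact, recorded in (\ref{invcurvature}) and in any case a general property of the Gaussian curvature (\ref{kappageneral}) since it is an intrinsic geometric quantity, that curvature transforms as a scalar function, $\kappa(f(\tilde z))=\tilde\kappa(\tilde z)$. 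Together these let me transport the problem freely between conformally equivalent domains.

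First I would take $\Omega$ a simply connected domain to which the theory applies (so that it is conformally equivalent to $\D$; the Riemann sphere and the whole plane, which carry no such metric, are set aside). By the uniformization theorem — recalled in Section~\ref{sec:Green and Robin}, where it is obtained from Koebe via the Green function — there is a conformal map $f:\D\to\Omega$. By ingredient (ii), $\kappa$ on $\Omega$ is constant iff $\tilde\kappa$ on $\D$ is constant, so it suffices to treat $\D$. For this I would apply the transformation law to the conformal self-maps $\phi\in\mathrm{Aut}(\D)$, i.e. the Möbius maps $\phi(z)=e^{\I\theta}\frac{z-a}{1-\bar a z}$: by ingredient (i) with $\tilde\Omega=\Omega=\D$, conformal invariance forces $\phi^{*}(ds)=ds$, and then (\ref{invcurvature}) with $f=\phi$ gives $\kappa(\phi(z))=\kappa(z)$ for every $z\in\D$ and every $\phi\in\mathrm{Aut}(\D)$.

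Finally, since $\mathrm{Aut}(\D)$ acts transitively on $\D$ — given $z_0,z_1\in\D$ one composes the Blaschke factor sending $z_0$ to $0$ with one sending $0$ to $z_1$ — the value $\kappa(z)$ is independent of $z$, hence $\kappa$ is constant on $\D$ and therefore, via $\kappa(z)=\tilde\kappa(f^{-1}(z))$, constant on $\Omega$. The step I would be most careful about is the logical status of the phrase ``conformally invariant metric'': the argument uses that invariance is postulated for \emph{all} conformal maps, in particular for the automorphisms of the target domain itself, and one should also explicitly dispose of the degenerate cases where no Green-function metric exists. Once those points are pinned down the proof needs no computation beyond the transformation rules already established — it is just uniformization plus transitivity.
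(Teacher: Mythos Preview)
Your proof is correct and follows essentially the same idea as the paper's: use that the automorphism group of a simply connected domain acts transitively, combined with the scalar transformation law (\ref{invcurvature}) for the curvature. The only difference is cosmetic---the paper applies the argument directly on $\Omega$ (noting that transitivity of $\mathrm{Aut}(\Omega)$ follows from the disk model), whereas you first transport to $\D$ via the Riemann map and then argue there; your detour is harmless but unnecessary.
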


\begin{proof}
The conformal group acts transitively in simply connected domains $\Omega$: for any two points
$a,b\in\Omega$ there is a conformal map $\Omega\to\Omega$ which takes $a$ to $b$. To see this
it is enough to consider the model with M\"obius transformations in the unit disk. Applying then (\ref{invcurvature}) with
$\tilde{z}=a$, $z=b$ gives the conclusion.
\end{proof}

The corollary does not extend to multiply connected domains because the conformal group does not act transitively on such domains.
Still the Poincar\'{e} metric has constant curvature, simply because that metric is pulled down from the universal covering
surface, which is simply connected by definition.

\begin{example}\label{ex:disk}
For the disk $D_R=\{z\in\C: |z|< R\}$ we have
$$
G(z,a)=-\frac{1}{2\pi}\log \Big|\frac{R(z-a)}{R^2-z\bar{a}}\Big|,
$$
hence, using (\ref{TaylorH}),
$$
H(z,a)=\log |\frac{R^2-z\bar{a}}{R}|
=\log|R^2-|a|^2-\bar{a}(z-a)|-\log R
$$
$$
=\re\log (1-\frac{\bar{a}(z-a)}{R^2-|a|^2})+\log(R^2-|a|^2)-\log R
$$
$$
=\log \frac{R^2-|a|^2}{R}-\re \Big(\frac{\bar{a}(z-a)}{R^2-|a|^2}+\mathcal{O}((z-a)^2)\Big),
$$
and so
\begin{equation}\label{h0h1disk}
h_0(a)=\log \frac{R^2-|a|^2}{R}, \quad h_1(a)=-\frac{\bar{a}}{R^2-|a|^2}.
\end{equation}

Choosing $R=1$ we see that
\begin{equation}\label{Poincare}
ds=e^{-h_0(z)}|dz|= \frac{|dz|}{1-|z|^2}
\end{equation}
coincides with the {\it Poincar\'{e} metric} in the case of the unit disk, and by definition so for every 
simply connected domain (other that the entire plane). For multiply connected domains $\Omega$ 
(and more general Riemann surfaces) one defines the Poincar\'{e} metric via the {\it universal covering surface}, 
which in most cases is conformally equivalent to the unit disk. See \cite{Ahlfors-1973, Beardon-Minda-2007}.

To elaborate, there exists a covering map, a holomorphic local homeomorphism $f:\D\to\Omega$,
such that the points in the preimage $f^{-1}(a)$ of any point $a\in\Omega$ are related by 
self-maps of $\D$. Then $\Omega$ becomes presented as $\D/\Gamma$ where
$\Gamma$ is a discrete group of M\"obius transformations in $\D$. 
This is a very useful point of view, which in particular makes it 
easy to to define the Poincar\'{e} metric on $\Omega$: one simply pushes down the Poincar\'{e} metric (\ref{Poincare}) from 
$\D$, which makes sense since that metric is invariant under M\"obius transformations.

Looking in the other direction, the covering map $f:\D\to\Omega$ 
has local inverses, ``liftings'', presenting $\varphi=f^{-1}$ as a multi-valued function with infinitely many branches
(unless $\Omega$ is simply connected).  These branches are related by M\"obius maps preserving $\D$, and
it is easy to see that this ensures that the combination $|\varphi'(z)|/(1-|\varphi(z)|^{2})$ is single-valued, despite $\varphi(z)$ itself is not.
Accordingly, the Poincar\'{e} metric may be defined via a variable transformation of (\ref{Poincare}) as
$$
ds=\frac{|\varphi'(z)||dz|}{1-|\varphi(z)|^2}.
$$
Eventually this means that also the Poincar\'{e} is of the form (\ref{ehzehw}), but then with the Robin function $h_0(z)$ defined in terms of
the Green function for the universal covering surface for $\Omega$.  

Besides having constant curvature, the Poincar\'{e} has the property of being complete, i.e.
the boundary being infinitely far away from any interior point. This is expressed in (\ref{Poincare})
by saying that the denominator vanishes on the boundary, or that $h_0(z)\to -\infty$ as $z$ tends to the boundary. 
Such a statement is expressed in terms of estimates for the Robin function in Proposition~\ref{prop:estimates}
in the next section. Being complete and having a specified curvature actually characterizes a metric
on a domain with smooth boundary because these conditions amount to an elliptic 
partial differential equation (nonlinear though) in the interior, plus some form of boundary data.
The definition (\ref{kappa}) of the curvature thus can be rephrased as saying that $\Delta h_0=\kappa e^{-2h_0}$,
and when $\kappa= -4$, as for the Poincar\*{e} metric, this becomes the {\it Liouville equation},
\begin{equation}\label{Liouville}
-\Delta h_0=4e^{-2h_0}.
\end{equation}

Returning briefly to the Green function, on computing
$$
\frac{\partial G(z,a)}{\partial z}=-\frac{1}{4\pi}\frac{R^2-|a|^2}{(z-a)(R^2-z\bar{a})}
$$
one finds, for analytic functions $f$ in $D_R$, that
$$
\int_{\partial D_R} f(z)\frac{\partial G(z,a)}{\partial z}dz =\frac{1}{2\I}f(a).
$$
This gives, for any harmonic function $u$, taken to be on the form $u=\re f$ with $f$ analytic,
\begin{equation}\label{Poisson}
u(a)=\re f(a)
=\frac{1}{2\pi}\int_0^{2\pi} u(Re^{\I\theta})\frac{R^2-r^2}{R^2-2Rr\cos({\theta-\varphi})+r^2}d\theta.
\end{equation}
Here we have set $z=Re^{\I\theta}$, $a=re^{\I\varphi}$ with $0\leq r<R$.
For $R=1$, the kernel in the integrand is known as the {\it Poisson kernel}.
\end{example}


\section{Capacity}\label{sec:capacity}

\subsection{Capacity in dimension two: the logarithmic capacity}\label{sec:logarithmic capacity}

Capacity in general, from a physical point of view, refers to a condensor's ability to store
charges while keeping the energy, or potential difference, within strict limits. In principle the definition is
\begin{equation}\label{capacity}
{\rm capacity}=\frac{\rm charge}{\rm potential}=\frac{({\rm charge})^2}{\rm energy}=\frac{\rm energy}{({\rm potential})^2}.
\end{equation}
In terms of units we have, for example, ${\rm joule}={\rm coulomb}\times {\rm volt}$.

The above works perfectly well in higher dimension, while the two dimensional case requires some adaptations.
More precisely, in higher dimensions it is possible to handle condensors consisting of just a single conductor,
while in two dimensions this causes problems.

In the situation of (\ref{epsilonK}), (\ref{VK}) there is the equilibrium distribution $\varepsilon_K$
for which the potential  takes the value $\gamma_K/2\pi$ on the conductor $K$. 
But one should really consider the difference of
potential between the two pieces of the condensor, which in the present case are $K$ and the point of infinity.
And this difference is infinite, meaning that the capacity actually should be zero. This is indeed reasonable
because the point of infinity (like any other single point) has no capacity to store any charges at a finite energy, 
or with finite potential.

Ignoring then the point of infinity and using just the Robin constant $\gamma_K/2\pi$ as a value for the potential, the capacity 
of the single conductor $K$ could, tentatively, be defined as
$$
{\rm tentative\,capacity\,of\,}K=\frac{2\pi}{\gamma_K}.
$$
However this expression is problematic because $\gamma_K$
may be zero or negative. Therefore one uses in two dimensional potential theory another
decreasing function of $\gamma_K$ to measure the capacity of a single conductor: 
\begin{definition}
The {\it logarithmic capacity} of a compact set $K$ is defined by
\begin{equation}\label{logcap}
{\rm logcap}(K)= e^{-\gamma_K}.
\end{equation}
We should remark that the notation ${\rm logcap}(K)$ is not standard in the literature, but it was used in \cite{Gustafsson-2004}.
\end{definition}

By Lemma~\ref{lem:gammadelta} the logarithmic capacity is the same as the {transfinite diameter} $\delta(K)$ of $K$.
So to summarize the somewhat messy situation in two dimensions that replaces the sequence of equalities
(\ref{capacity}) we may write, in a highly informal way and ignoring some constants,
\begin{align*}
{\rm logarithmic\,\, capacity}=& \exp({-{\rm equilibrium\,\,potential \,\,(at \,\,unit \,\,charge)}})\\
=&\exp(-{\rm Robin's \,\,constant})\\
=&\exp({-{\rm energy \,\, of \,\, equilibrium\,\, distribution}})\\
=&{\rm transfinite \,\,diameter}.
\end{align*}
In terms of exactly defined mathematical quantities this becomes 
$$
{\rm logcap}(K)=e^{-\gamma_K}= e^{-4\pi E(\varepsilon_K)}=\delta(K).
$$

\begin{remark}
The factor $4\pi$ in front of the energy above is absent in many mathematical texts, which have more ``clean'' definitions 
of energy. Compare Remark~\ref{rem:2pi}.
\end{remark}

\begin{example}
With $K$ a closed disk of radius $R$, or its boundary, we have
$$
{\rm logcap}(K)=R.
$$
And for $K$ a straight line segment of length $\ell$, 
$$
{\rm logcap}(K)=\ell/4.
$$
See \cite{Ransford-1995} for further examples.
\end{example}


\subsection{Geometric estimates of capacities and Robin functions}\label{sec:estimates}

The transfinite diameter, being a geometric quantity, is a useful tool for deriving estimates for capacities and Robin functions.
For the capacity and the Robin constant this is classical, see for example \cite{Ahlfors-1973, Ransford-1995}. As is obvious 
from Lemma~\ref{lem:gammadelta},
\begin{equation}
K_1\subset K_2\quad\text{implies}\quad\delta (K_1)\leq \delta (K_2), \,\, \gamma_{K_1}\geq \gamma_{K_2}.
\end{equation}
More refined estimates can be obtained by symmetrization techniques in combination with explicit examples,
as will be shortly be explained.

As for the Robin function $h_0(a)=\gamma_K(a)$ for the open set $\Omega=\C\setminus K$, 
Definition~\ref{def:transfinite diameter} again gives simple estimates as for inclusions: 
$$
a\in\Omega_1\subset \Omega_2 \quad\text{implies}\quad \delta(K_1,a)\geq \delta(K_2,a), \,\,
\gamma_{K_1}(a)\leq \gamma_{K_2}(a).
$$ 
Thus $h_0(a)$ increases with the domain. This can also be proved using a form of the maximum principle which generalizes the
Schwarz-Pick lemma and which says that any holomorphic map is distance decreasing for the metric $ds =e^{-h_0(z)}|dz|$: 
if $f:\Omega\to \tilde{\Omega}$ then
\begin{equation}\label{eh0dz}
e^{-\tilde{h}_0(f(z))}|f^\prime (z)||dz|\leq e^{-h_0(z)}|dz|,\,\, z\in\Omega.
\end{equation}
With $\Omega\subset\tilde{\Omega}$, $f(z)=z$, the mentioned monotonicity of $h_0$ follows.
The inequality (\ref{eh0dz}) is related to what is sometimes called {\it Lindel\"of's principle}, see 
\cite{Nevanlinna-1953, Gustafsson-1979}, 
and along with other similar distance shrinking properties for metrics \cite{Krantz-2004, Beardon-Minda-2007} it
generalizes the {\it principle of hyperbolic mass} \cite{Minda-Wright-1982}.

In terms of the distance to the boundary, and taking into account also the other direction,
the main estimates are as follows.
\begin{proposition}\label{prop:estimates}
The Robin function is subject to the following bounds in terms of the distance
$d(z)$ from $z\in \Omega$ to the boundary $\partial\Omega$. As a general estimate we have
$$
\log d(z)\leq h_0(z) \leq \log d(z) +A \quad(z\in\Omega)
$$
for some constant $0<A<\infty$.
Next,  depending on the geometry the upper bound can be made more precise as follows.
\begin{enumerate}
\item If $\Omega$ is convex, then
$$ 
h_0(z)\leq \log 2d(z).
$$

\item If $\Omega$ is simply connected, then
$$
h_0(z)\leq\log 4d(z).
$$

\item If $\Omega$ is not simply connected, 
let $K_1,\dots, K_m$ denote the components of $K=\C\setminus \Omega$ (each assumed to 
consist of more than one point) and set
$$
d_j(z)=\inf\{ |z-\zeta|: \zeta\in K_j\},
$$
$$
D_j(z)=\sup\{ |z-\zeta|: \zeta\in K_j\}.
$$
Then
$$
h_0(z)\leq \min_{j=1,\dots,m} \log\frac{4d_j(z)}{1-\frac{d_j(z)}{D_j(z)}}.
$$
\end{enumerate} 
\end{proposition}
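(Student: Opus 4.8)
The plan is to reduce everything to the explicit disk computation of Example~\ref{ex:disk} via the monotonicity of $h_0$ under inclusions and the distance-decreasing property \eqref{eh0dz}. First I would establish the general lower bound $\log d(z)\le h_0(z)$. Fix $z_0\in\Omega$ and let $d=d(z_0)$; the disk $D_d(z_0)=\{|z-z_0|<d\}$ is contained in $\Omega$, so by the inclusion monotonicity $h_0^\Omega(z_0)\ge h_0^{D_d(z_0)}(z_0)$. By \eqref{h0h1disk} (translated to be centered at $z_0$, with $R=d$ and the evaluation point being the center $a=z_0$) one has $h_0^{D_d(z_0)}(z_0)=\log\frac{d^2-0}{d}=\log d$, which gives the lower bound. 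For the general upper bound $h_0(z)\le\log d(z)+A$, the cleanest route is to invoke the first estimate in Proposition~\ref{prop:estimates} for a disk-type exterior obstacle: since $K=\C\setminus\Omega$ is nonpolar, it contains at least two points, hence (after a normalization) contains a continuum of transfinite diameter bounded below; combining $\delta$-monotonicity with the explicit $\delta$ of a segment (namely $\ell/4$) produces a finite $A$ depending only on the configuration. Alternatively one observes that $h_0(z)-\log d(z)$ extends continuously and is bounded on $\overline\Omega$ minus a neighborhood of the boundary, and near $\partial\Omega$ it is controlled by the segment/disk comparison.

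Next I would treat the three refined upper bounds, all of which follow by exhibiting a larger domain whose Robin function at $z$ is computable and then applying monotonicity $h_0^\Omega(z)\le h_0^{\widetilde\Omega}(z)$ for $\Omega\subset\widetilde\Omega$.

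\begin{enumerate}
\item If $\Omega$ is convex, fix $z_0$ and let $d=d(z_0)$. Pick a boundary point $\zeta_0$ realizing the distance; convexity implies $\Omega$ lies in the half-plane $\{z:\operatorname{Re}((z-\zeta_0)\overline{(z_0-\zeta_0)})>0\}$, i.e. a half-plane whose boundary passes through $\zeta_0$ at distance $d$ from $z_0$. A half-plane is conformally a disk, and mapping it to $D_R$ with $z_0\mapsto 0$ forces $R=2d$ (the Euclidean distance from an interior point to the boundary of a half-plane equals half the ``disk radius'' in the uniformizing coordinate, as one checks from the Cayley-type map $w=\frac{z-z_0}{z-z_0^*}$ where $z_0^*$ is the reflection of $z_0$). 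Then $h_0^{\text{half-plane}}(z_0)=\log\frac{(2d)^2-0}{2d}=\log 2d$ by \eqref{h0h1disk}, and monotonicity gives $h_0(z_0)\le\log 2d$.

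\item If $\Omega$ is simply connected (and $\ne\C$), this is exactly the classical Koebe $1/4$-theorem restated in Robin-function language. Normalize by the Riemann map $f:\D\to\Omega$ with $f(0)=z_0$, $f'(0)>0$; then by conformal invariance \eqref{h0h0log}, $h_0^\Omega(z_0)=h_0^{\D}(0)+\log f'(0)=\log 1+\log f'(0)=\log f'(0)$ (using \eqref{h0h1disk} with $R=1,a=0$). The Koebe theorem gives $d(z_0)\ge\frac14 f'(0)$, hence $f'(0)\le 4d(z_0)$ and $h_0(z_0)\le\log 4d(z_0)$.

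\item If $\Omega$ is not simply connected, fix $z_0$ and fix one component $K_j$ of $K$. Let $d_j=d_j(z_0)$ and $D_j=D_j(z_0)$, and pick $\zeta_1,\zeta_2\in K_j$ with $|z_0-\zeta_1|=d_j$, $|z_0-\zeta_2|=D_j$. Then $\Omega$ is contained in the simply connected domain $\C\setminus S_j$, where $S_j$ is any connected set inside $K_j$ joining $\zeta_1$ to $\zeta_2$ (such exists since $K_j$ is connected); in particular $\Omega\subset \C\setminus[\zeta_1,\zeta_2]'$ for a curve $[\zeta_1,\zeta_2]'\subset K_j$. The worst case for the Robin function of the complement of such a curve, at the point $z_0$, is estimated by mapping $\C\setminus(\text{curve})$ conformally to the exterior of a slit or equivalently (via $w\mapsto\frac{z-\zeta_1}{z-\zeta_2}$, which sends $\zeta_1\mapsto 0$, $\zeta_2\mapsto\infty$) to the complement of a continuum joining $0$ and $\infty$; after this Möbius change of coordinate one is reduced to a domain to which parts (1)–(2) apply, and tracking the derivative factor $|M'(z_0)|$ of the Möbius map $M(z)=\frac{z-\zeta_1}{z-\zeta_2}$ produces exactly the quotient $\frac{4d_j}{1-d_j/D_j}$: indeed $|M(z_0)|=d_j/D_j$, the image point sits at distance $1-d_j/D_j$ (up to the relevant geometric factor) from the obstacle, and $|M'(z_0)|=\frac{|\zeta_1-\zeta_2|}{|z_0-\zeta_2|^2}$ combines with \eqref{h0h0log} to give the stated bound. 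Taking the minimum over $j$ is legitimate since the inequality holds for each choice of component.
\end{enumerate}

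The main obstacle is step (3): making the Möbius-reduction bookkeeping precise so that the transformation law \eqref{h0h0log} together with the simply-connected estimate (2) assembles into exactly $\log\frac{4d_j(z)}{1-d_j(z)/D_j(z)}$, rather than merely an estimate of the same shape with a worse constant. One has to be careful that the curve in $K_j$ joining $\zeta_1$ and $\zeta_2$ lies in the closed disk of radius $D_j$ about $z_0$ and outside the open disk of radius $d_j$, and that the conformal radius of the complement of such a curve, measured at $z_0$ in the Möbius-transformed coordinate, is bounded by $4\cdot(1-d_j/D_j)^{-1}\cdot(\text{Jacobian factor})$ — this is where the Koebe $1/4$ constant enters and where the sharpness of the denominator must be checked against the extremal configuration (a radial segment). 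Steps (1) and (2) are essentially immediate given Example~\ref{ex:disk}, Koebe's theorem, and the monotonicity/invariance already recorded in Proposition~\ref{lem:transformation}; the general two-sided bound requires only the nonpolarity of $K$ and the segment capacity $\ell/4$.
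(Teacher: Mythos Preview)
Your lower bound and convex case match the paper exactly. For cases (2) and (3) you take a genuinely different route. The paper does \emph{not} invoke Koebe: instead it uses the transfinite-diameter characterization $e^{-h_0(a)}=\delta(K,a)$ from Lemma~\ref{lem:hgammadelta} and a \emph{circular symmetrization} argument. Projecting each point of $K$ radially onto the positive real axis can only decrease the quantities $|z_j-z_k|$ while fixing $|z_j-a|$, so $\delta(K,0)$ can only increase, hence $h_0(0)$ for the complement of the radial segment dominates $h_0^\Omega(0)$. The paper then computes $h_0(0)$ explicitly for $D=(\C\cup\{\infty\})\setminus[a,b]$ via the map $z\mapsto\sqrt{(z-a)/(z-b)}$ to a half-plane, obtaining $h_0(0)=\log\frac{4ab}{b-a}$; cases (2) and (3) follow by taking $(a,b)=(d(0),\infty)$ and $(a,b)=(d_j(0),D_j(0))$ respectively. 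Your Koebe route for (2) is of course equivalent (the paper's Remark makes this explicit), and it has the advantage of being shorter; the paper's symmetrization route has the advantage of giving (2) and (3) in a single stroke and of not presupposing Koebe.

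Your M\"obius reduction for (3) \emph{does} work, but your intermediate bookkeeping is off. After $M(z)=\frac{z-\zeta_1}{z-\zeta_2}$ the image $M(K_j)$ is a connected set through $0$ and $\infty$, so $\C\setminus M(K_j)$ is simply connected; the distance from $M(z_0)$ to this set is at most $|M(z_0)|=d_j/D_j$ (\emph{not} $1-d_j/D_j$), so case~(2) gives $h_0^{\C\setminus M(K_j)}(M(z_0))\le\log(4d_j/D_j)$. The transformation law \eqref{h0h0log} with $|M'(z_0)|=|\zeta_1-\zeta_2|/D_j^2$ then yields
\[
h_0^{\C\setminus K_j}(z_0)\le \log\frac{4d_j D_j}{|\zeta_1-\zeta_2|},
\]
and the missing ingredient you were worried about is simply the reverse triangle inequality $|\zeta_1-\zeta_2|\ge D_j-d_j$, which gives exactly $\log\frac{4d_j D_j}{D_j-d_j}=\log\frac{4d_j}{1-d_j/D_j}$. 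So your ``main obstacle'' dissolves once this is inserted; the sharp constant comes out without any extremal-configuration analysis.
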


\begin{proof}
The lower bound follows on comparing $h_0(z)$ with the corresponding quantity for the largest disk with center $z$
and contained in $\Omega$, hence having radius $d(z)$. Equation (\ref{h0h1disk}) with $z=a=0$, $R=d(z)$ shows
that for that disk, $h_0(z)=\log d(z)$. Since $h_0$ increases with the domain the statement on lower bound follows.

In the convex case the domain is contained in a half-plane on distance $d(z)$ from $z$, and  
the upper bound follows on comparing $h_0(z)$ for $\Omega$ with the same quantity for the half-plane. The
details (omitted) are very easy.

The last two estimates require symmetrization, or rearrangement techniques, in combination with the expression
for $h_0(z)$ in terms of the transfinite diameter as in Lemma~\ref{lem:hgammadelta}.
For the simply connected case the idea is as follows:  let $w\in\partial \Omega$ let be a closest point to $z\in\Omega$. 
We may assume that $z=0$ and $w=R>0$ for simplicity of notation.
Then the disk $\D(0,R)$
is contained in $\Omega$, but there is no explicit larger domain that we can compare with (as we could in the convex
case). 

However we see from Definition~\ref{def:transfinite diameter}, with $a=0$, that $\delta(K,0)$ does not increase if we project circularly
each point $z_j$ onto the corresponding point $|z_j|$ on the positive real axis. Indeed, the distances $|z_j-a|=|z_j|$ to $a$
do not change, while
the distances $|z_j-z_k|$ can only become smaller.  It follows that $h_0(0)$ decreases, or at least does not increase, if the complement $K$
of $\Omega$ is circularly projected onto the positive real axis.

To reach an explicit estimate we therefore need to compute $h_0(0)$ for a domain of type $D=(\C\cup \{\infty\})\setminus [a,b]$ 
where  $0<a<b$. The point of infinity actually makes no difference here, but we include it to make $D$ simply connected. This allows 
mapping $D$ conformally onto the upper half-plane by
$$
z\mapsto w=\sqrt{\frac{z-a}{z-b}}.
$$
Then $z=0$ goes to $w=\sqrt{a/b}$. The version of the Green function for the upper half-pane $U$
which is relevant in our case is
$$
G_{U}(w,\sqrt{a/b})=-\frac{1}{2\pi}\big(\log |w-\sqrt{a/b}\,|-\log |w+\sqrt{a/b}\,|\big).
$$
Thus the coefficient $h_0$ becomes, in the $w$-plane,
$$
h_0(\sqrt{a/b})=\log |\sqrt{a/b}+\sqrt{a/b}\,|.
$$

Transforming back to the symmetrized domain $D$ in the $z$-plane gives,  
by straight-forward computations using Lemma~\ref{lem:transformation},
$$
h_0(0)=\log |2\sqrt{a/b}\,|+\log |\frac{dz}{dw}|_{w=\sqrt{a/b}}=\log \frac{4ab}{b-a}.
$$

When $\Omega$ is simply connected the desired result in the lemma follows on letting $a=d(0)$, $b\to \infty$.
In the multiply connected case we take $a=d_j(0)$, $b=D_j(0)$. This finishes the proof 
of the proposition. Some further details can be found in \cite{Gustafsson-1979}.
\end{proof}

\begin{remark}
In {univalent function theory}, see \cite{Pommerenke-1975, Duren-1983}, 
one works with one-to-one, named {\it univalent} or {\it schlicht}, analytic functions in the unit disk, usually normalized according to
\begin{equation}\label{funivalent}
f(\zeta)=\zeta+a_2\zeta^2+a_3\zeta^3+\dots,
\end{equation}
that is with the first two Taylor coefficients being $a_0=0$, $a_1=1$. The book \cite{Duren-1983} by Peter Duren
appeared with an exceptionally unlucky timing: the main question in the area was {\it Bieberbach's conjecture} from 1916,
stating that $|a_n|\leq n$ for all $n$, and this conjecture was settled (as being true) by Louis de Branges 
\cite{deBranges-1985} immediately after the appearance  of the book.  

The {\it one-quarter theorem} of Koebe says that the image of the unit disk 
under any mapping as in (\ref{funivalent}) covers at least the disk of radius $1/4$ around the origin. 
Under a general variable transformation from the unit disk  to a domain $\Omega$ in the $z$-plane,
the Green functions, with poles $\zeta=0$ and $z=a$, respectively, can be directly compared with each other to give
$$
\log|\zeta|=\log |z-a|-h_0(a)+\mathcal{O} (|z-a|).
$$
This shows that, as a function of $\zeta$, the left member in 
$$
e^{-h_0(a)}(z-a)=\zeta +a_2\zeta^2+\dots .
$$
is a normalized univalent function, as in (\ref{funivalent}). 
On the boundary it is, on one hand, at least $1/4$ by Koebe,
and on the other hand at least $e^{-h_{0}(a)}d(a)$ by definition of the distance function $d(a)$.
This connects the estimate for simply connected domains in Proposition~\ref{prop:estimates}
with the Koebe theorem.

The quantity $e^{h_0(a)}$ is known in this context as the {\it mapping radius}, and it is known to
be concave (as a function of $a$) when $\Omega$ is convex, see \cite{Minda-Wright-1982}
for an elegant proof using (\ref{eh0dz}). 
Consequently, $h_0$ itself is convex. The convexity of $h_0$ is independently
proved in many places, including \cite{Caffarelli-Friedman-1985, Kawohl-1985, Gustafsson-1990a}. See also 
\cite{Bandle-Flucher-1998, Flucher-1999} for overviews.
\end{remark}


\subsection{Harmonic measure and balayage}\label{sec:balayage}

Closely related to the equilibrium distribution are concepts of {harmonic measure}  and {balayage}. 
\begin{definition}\label{def:harmonic measure}
Given a point $a$ in a domain $\Omega$, the {\it harmonic measure} with respect to $a$ is the uniquely determined measure
$\eta_a$ on the boundary $\partial \Omega$ having the property that
\begin{equation}\label{hdeta}
h(a)=\int_{\partial \Omega} h\, d\eta_a
\end{equation}
for every harmonic function $h$ in $\Omega$ smooth up to the boundary. 
Necessarily, $\eta_a$ has total mass one (and is positive).
\end{definition}

Since, in terms of the Green function $G(z,a)$ for $\Omega$, we also have 
\begin{equation}\label{hdGdn}
h(a)=-\int_{\partial \Omega} h(z)\frac{\partial G(z,a)}{\partial n} ds,
\end{equation}
we can immediately identify harmonic measure as being given by
\begin{equation}\label{dGdn}
d\eta_a=-\frac{\partial G(z,a)}{\partial n} ds.
\end{equation}
Indeed, referring to (\ref{VGK}), (\ref{DeltaVK}) we conclude that $\eta_a=\varepsilon_{K,a}$.
In the case of a disk the normal derivative of the Green function above is fully explicit
and can be identified with the {\it Poisson kernel}. See precisely (\ref{Poisson}).

We see that the right member of  (\ref{hdeta}) reproduces values of a harmonic function  from a probability measure
on the boundary, and it is an important tool in potential theory. Particularly interesting are its
roles in probabilistic potential theory. One interpretation of the harmonic
measure is that, given a segment $E\subset \partial \Omega$ of the boundary,  
$\eta_a(E)$ equals the probability for a Brownian motion particle started at $a\in\Omega$
to first reach the boundary  on exactly that segment.
See \cite{Doob-1984, Bass-1995} in general, and \cite{Grotta-Ragazzo-2024} 
for an interpretation in a vortex related context.

Much related to harmonic measure, and also to equilibrium measure, is the concept of {balayage}.
The precise meaning of {\it balayage}, french for ``cleaning'', is that a measure 
(the `dust' in the cleaning picture) sitting in a domain is swept to the boundary
in such a way that its potential outside the domain is not affected. 
The sweeping operation is $\mu\mapsto \nu$, where, assuming $\mu$ sits in 
a domain $\Omega$, $\nu$ has its support on the boundary $\partial\Omega$ and is determined by
\begin{equation}\label{UUK}
U^\nu=U^\mu \quad \text{on}\,\,K. 
\end{equation}
Here we have used $K$ for the entire complement of $\Omega$ in the Riemann sphere, hence it is still a compact set.
We see that the difference between the potentials is constantly equal to zero on $K$.
An intuitive notation is as follows. 

\begin{definition}\label{def:BalK}
{\it Classical balayage} (sweeping) of a measure $\mu$ in $\Omega$, out from $\Omega$
and to the complement $K$, is denoted
\begin{equation}\label{notationBal}
\nu={\rm Bal} (\mu, K).
\end{equation}
The new measure $\nu$ is determined by (\ref{UUK}) and actually sits on the boundary $\partial\Omega\subset K$.
\end{definition}
In the case of a point mass, for example $\mu=\delta_a$, $a\in\Omega$, 
it follows from the definition of the Green function that 
$$
U^{\delta_a}-U^{{\rm Bal}(\delta_a, K)}=G(\cdot,a) \quad \text{in } \Omega.
$$
This identity remains valid up to $\partial \Omega$, where both members vanish.
The right member is not defined outside
$\Omega\cup\partial\Omega$, but the left member is, and it vanishes identically there.  
Therefore ${\rm Bal}(\delta_a,K)$ in fact equals the previously introduced of equilibrium measure $\varepsilon_{K,a}$.
In summary:
\begin{lemma}
Balayage of a point mass, harmonic measure, and equilibrium distribution all result in the same measure:
\begin{equation}\label{BaldeltaKeta}
{\rm Bal}(\delta_a, K)=\eta_a=\varepsilon_{K,a}.
\end{equation}
\end{lemma}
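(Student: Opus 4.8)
The plan is to verify the three claimed identities by chaining together equalities that have already been established in the preceding subsections, so that essentially no new computation is required. The statement asserts
$$
{\rm Bal}(\delta_a, K)=\eta_a=\varepsilon_{K,a},
$$
and I would prove it by showing each of the outer two measures coincides with the middle one (or with $\varepsilon_{K,a}$, whichever is most direct).

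First I would recall the defining relation for the equilibrium measure, equation (\ref{DeltaVK}), together with the identification $V_{K,a}=-G(\cdot,a)$ in $\Omega$ and $V_{K,a}=0$ on $K$. From (\ref{VUU}) we have $V_{K,a}=U^{\varepsilon_{K,a}}-U^{\delta_a}$ everywhere, hence $U^{\varepsilon_{K,a}}=U^{\delta_a}$ on $K$; comparing with the defining property (\ref{UUK}) of classical balayage, $U^\nu=U^\mu$ on $K$ with $\mu=\delta_a$, and invoking uniqueness of the balayage measure, we conclude ${\rm Bal}(\delta_a,K)=\varepsilon_{K,a}$. This is exactly the argument sketched in the paragraph preceding the lemma, so I would simply make it precise: the left side $U^{\delta_a}-U^{{\rm Bal}(\delta_a,K)}$ equals $G(\cdot,a)$ in $\Omega$, extends continuously to $\partial\Omega$ where it vanishes (both the Green function and $V_{K,a}$ vanish there), and is identically zero on the rest of $K$ by the defining property of balayage; since $V_{K,a}$ is characterized by these same properties, the two balayage candidates must agree.

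Next I would identify $\eta_a$ with $\varepsilon_{K,a}$. From the Poisson-type representation (\ref{hdGdn}), valid for harmonic $h$ smooth up to the boundary, and the defining property (\ref{hdeta}) of harmonic measure, one reads off (\ref{dGdn}): $d\eta_a=-\frac{\partial G(z,a)}{\partial n}\,ds$. On the other hand, differentiating the Green-function representation of $V_{K,a}$ and using (\ref{DeltaVK}), the distributional Laplacian of $V_{K,a}$ concentrated on $\partial\Omega$ is precisely the normal-derivative jump, which is $-\frac{\partial G(z,a)}{\partial n}\,ds$; hence $\varepsilon_{K,a}=-\frac{\partial G(z,a)}{\partial n}\,ds=d\eta_a$. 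Combining the two identifications gives the triple equality.

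The main obstacle is regularity: the clean normal-derivative computation in (\ref{dGdn})–(\ref{DeltaVK}) presupposes that $\partial\Omega$ is smooth enough for the Green function to have a normal derivative and for Green's identity to apply, whereas the balayage and equilibrium-measure characterizations via potentials (\ref{UUK})–(\ref{VUU}) make sense for arbitrary compact $K$ of positive capacity. For a careful statement I would either restrict to the smooth case (where all three descriptions are literally available) or else take the potential-theoretic identity $U^{\varepsilon_{K,a}}=U^{\delta_a}$ on $K$ as the primary characterization and note that, under minor boundary regularity, it reduces to the harmonic-measure and normal-derivative formulas; the uniqueness needed at each stage follows from the fact that a signed measure is determined by its potential (equivalently, from the unicity theorem for potentials of measures with the same total mass and equal potentials q.e.).
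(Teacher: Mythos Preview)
Your proposal is correct and follows essentially the same approach as the paper: the paper's argument is contained in the paragraphs immediately preceding the lemma (there is no separate proof block), and you have simply spelled out those steps—using (\ref{VUU}) and (\ref{UUK}) for ${\rm Bal}(\delta_a,K)=\varepsilon_{K,a}$, and (\ref{dGdn}) with (\ref{DeltaVK}) for $\eta_a=\varepsilon_{K,a}$—together with a sensible remark on the regularity needed for the normal-derivative identification.
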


Returning to the geometry with $K$ compact in $\C$ and $a=\infty$, 
it follows that also the initially defined equilibrium measure $\varepsilon_K=\varepsilon_{K,\infty}$
(see before (\ref{deltaEK}))
is simultaneously an instance of harmonic measure and of balayage, with $a=\infty$: 
$$
\varepsilon_K=\eta_\infty={\rm Bal}(\delta_\infty, K).
$$

\begin{remark}\label{rem:phil}
Balayage techniques were invented and used by Poincar\'{e} to solve the Dirichlet problem.
The idea was that in order to construct a harmonic function with given boundary values one starts with some, 
more or less arbitrary, superharmonic function $u$ having the right boundary values, and then ``sweeps'', in small and
explicit steps, the exceeding mass $-\Delta u\geq 0$ to the boundary. Eventually all that mass is swept away
and $u$ has become the actual solution. As Perron noticed (see \cite{Doob-1984} for historical information), 
this solution can simply be 
characterized as the smallest superharmonic function satifying the boundary conditions.

This contrasts, and complements, Riemann's method to solve the Dirichlet problem by energy minimization, which
later evolved into techniques of orthogonal projection and Hodge decomposition.
The two methods represent different sides of a fundamental duality in potential theory which has been made
explicit by Moreau \cite{Moreau-1962}.  See also \cite{Gustafsson-2004} for a general overview, with focus on balayage.
\end{remark} 


\subsection{The Hadamard variational formula}\label{sec:Hadamard}

The classical Hadamard formula describes the change of the Green function under a small
(infinitesimal) deformation of the boundary (assumed smooth) of the domain.
Traditionally it  reads as follows.

\begin{theorem}
Under an infinitesimal variation $\delta n$ of the boundary $\partial \Omega$ in the outward normal 
direction, the Green function changes according to
\begin{equation}\label{Hadamard}
\delta G(a,b)=\int_{\partial \Omega}\frac{\partial G(\cdot,a)}{\partial n}\frac{\partial G(\cdot,b)}{\partial n}\delta n ds.
\end{equation}
\end{theorem}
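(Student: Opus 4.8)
The plan is to derive the Hadamard formula from Green's second identity applied on the perturbed domain, tracking the first-order change in the Green function. I would write $\Omega_\epsilon$ for the domain whose boundary is displaced by $\epsilon\,\delta n$ in the outward normal direction, and denote by $G_\epsilon(\cdot,a)$ its Green function. The key structural fact is that $u(z):=\delta G(z,a)=\frac{d}{d\epsilon}\big|_{\epsilon=0}G_\epsilon(z,a)$ is \emph{harmonic} in $\Omega$ away from $z=a$: since both $G_\epsilon(\cdot,a)$ and $G(\cdot,a)$ have exactly the same logarithmic singularity $\frac{1}{2\pi}\log\frac{1}{|z-a|}$ at $a$ (independent of $\epsilon$), the singular part cancels in the difference quotient, so $u$ extends harmonically across $a$ and is harmonic throughout $\Omega$.

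Next I would pin down the boundary values of $u$ on $\partial\Omega$. On the displaced boundary $\partial\Omega_\epsilon$ one has $G_\epsilon(\cdot,a)=0$. A point of $\partial\Omega$ lies at signed distance $-\epsilon\,\delta n$ (inward) from $\partial\Omega_\epsilon$, so to first order, for $z\in\partial\Omega$,
\begin{equation*}
0=G_\epsilon(z+\epsilon\,\delta n(z)\,\mathbf{n},a)\approx G_\epsilon(z,a)+\epsilon\,\delta n(z)\,\frac{\partial G(z,a)}{\partial n},
\end{equation*}
which combined with $G(z,a)=0$ on $\partial\Omega$ yields $u(z)=\delta G(z,a)=-\delta n(z)\,\dfrac{\partial G(z,a)}{\partial n}$ on $\partial\Omega$. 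Thus $u=\delta G(\cdot,a)$ is the harmonic function in $\Omega$ with these prescribed boundary values.

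The final step is to evaluate $u(b)=\delta G(b,a)$ using the reproducing property of harmonic measure, equivalently the Green representation formula. Since $u$ is harmonic in $\Omega$, formula (\ref{hdGdn}) (the Poisson--Green representation) gives
\begin{equation*}
u(b)=-\int_{\partial\Omega}u(z)\,\frac{\partial G(z,b)}{\partial n}\,ds=\int_{\partial\Omega}\delta n(z)\,\frac{\partial G(z,a)}{\partial n}\,\frac{\partial G(z,b)}{\partial n}\,ds,
\end{equation*}
which is exactly (\ref{Hadamard}). One should note the symmetry $\delta G(a,b)=\delta G(b,a)$ is automatic from $G(a,b)=G(b,a)$ and is reflected in the symmetry of the right-hand side.

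The main obstacle is making the informal first-order expansion at the boundary rigorous: one must justify that $G_\epsilon\to G$ together with its normal derivative in a way uniform enough near $\partial\Omega$ that the Taylor expansion above is valid to $O(\epsilon)$, and that the limit $u$ is genuinely harmonic up to the boundary with the claimed boundary data. This requires the boundary to be smooth (as assumed) and some elliptic regularity/Schauder-type estimates for the family $G_\epsilon$; I would either invoke these as standard or, for a cleaner self-contained argument, apply Green's second identity to the pair $G(\cdot,a)$ and $G_\epsilon(\cdot,b)$ on the domain $\Omega\cap\Omega_\epsilon$ and carefully account for the thin boundary layer between $\partial\Omega$ and $\partial\Omega_\epsilon$, where the product of normal derivatives produces precisely the integrand in (\ref{Hadamard}) in the limit.
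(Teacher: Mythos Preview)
Your proof is correct and follows essentially the same approach as the paper: identify $\delta G(\cdot,a)$ as a harmonic function in $\Omega$, compute its boundary values to first order as $-\delta n\,\partial G(\cdot,a)/\partial n$, and then apply the Poisson--Green representation formula (\ref{hdGdn}) to read off $\delta G(b,a)$. The paper's proof sketch is just a one-line version of this, saying that in (\ref{hdGdn}) the boundary value of $h$ should be taken as $-\delta n\,\partial G(\cdot,b)/\partial n$, so that the left member becomes $\delta G(a,b)$; your treatment spells out the same steps more carefully and adds the appropriate caveats about regularity.
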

If one wants to avoid infinitesimals one can simply divide both sides with $\delta t$ and interpret $\delta n/\delta t$
as the speed of the boundary in the normal direction. The left member then becomes the derivative
$\frac{d}{dt}G_{\Omega(t)}(a,b)$. 
The proof of (\ref{Hadamard}) is simple: in (\ref{hdGdn})
one simply chooses $h(z)=-\frac{\partial G(z,b)}{\partial n}$, where after multiplication by $\delta n$
the left member can be interpreted as  $\delta G(a,b)$ after a displacement 
of the boundary in the normal direction. See  \cite{Nehari-1952, Garabedian-1964} for details.
The latter reference also contains a formulation of the Hadamard in terms of a stress-energy
tensor consisting of derivatives of the Green function in the interior of the domain. This version of the 
Hadamard variation is further developed in \cite{Gustafsson-Sebbar-2022}.   

There are many other types of domain variations, for example the {\it Schiffer variation} 
\cite{Ahlfors-1973, Duren-1983, Wolpert-2018},
which is less elementary than the Hadamard variation but has the advantage of
requiring no smoothness of the boundary.
It was invented as a tool for proving one of the estimates 
($n=4$) for the Bieberbach conjecture mentioned in Section~\ref{sec:estimates} above.

Recalling next (\ref{GlogH}) and noticing that the singularity in the right member is not affected by any variation of  
the boundary, the Hadamard formula (\ref{Hadamard}) gives
\begin{equation}\label{HadamardH}
\delta H(a,b)=2\pi \int_{\partial \Omega}\frac{\partial G(\cdot,a)}{\partial n}\frac{\partial G(\cdot,b)}{\partial n}\delta n ds.
\end{equation}
Here we can choose $a=b$ to obtain
\begin{equation}\label{Hadamardh}
\delta h_0(a)=2\pi \int_{\partial \Omega}\Big(\frac{\partial G(\cdot,a)}{\partial n}\Big)^2\delta n ds.
\end{equation}

Now consider a variation $\delta x$ in the $x$-direction.
This gives a variation in the normal direction by the $x$-component of the unit normal vector:
$\delta n=\delta x \cdot n_x=\delta x \,({\bf e}_x\cdot {\bf n})$.
With this choice $\delta h_0(a)$ can be identified with a differential,
$$
\delta h_0(a)=-\frac{\partial h_0(a)}{\partial {a_x}}\delta a_x,
$$
the minus sign arising because displacing $a$ by $da_x$ is equivalent to moving the entire domain
in the opposite direction. Proceeding similarly for $\delta n=\delta y \cdot n_y=\delta y \,({\bf e}_y\cdot {\bf n})$ 
and taking finally complex linear combinations and using the definition of the 
Wirtinger derivatives, see Section~\ref{sec:notations}, we obtain
\begin{equation*} 
h_1(a)=\frac{\partial h_0(a)}{\partial {a}}
=\frac{1}{2}\big(\frac{\partial h_0(a)}{\partial {a_x}}-\I \frac{\partial h_0(a)}{\partial {a_y}}\big)
\end{equation*}
\begin{equation*}
=-\pi \int_{\partial \Omega}\Big(\frac{\partial G(z,a)}{\partial n}\Big)^2(\, {\bf e}_x-\I \,{\bf e}_y)\cdot {\bf n} \,ds.
\end{equation*}
\begin{equation*}
=-\pi \int_{\partial \Omega}\Big|2\frac{\partial G(z,a)}{\partial z}\Big|^2\,\I d\bar{z} 
=-4\pi \I\int_{\partial \Omega}\frac{\partial G(z,a)}{\partial z}\frac{\partial G(z,a)}{\partial \bar{z}}\, d\bar{z}
\end{equation*}
$$
=4\pi\I \int_{\partial \Omega}\Big(\frac{\partial G(z,a)}{\partial z}\Big)^2\, d{z}.
$$
Here we used partial integration in the last step.

Of course, the above formula is most easily obtained directly by computing the final integral by residues, as was indeed done
in deriving the formula (\ref{force}) for the force on a vortex. Still it is worth to single out the result as statement of its own:
\begin{lemma}
The coefficient of the linear term in the expansion (\ref{GlogH}), (\ref{TaylorH}) of the Green function is given by
\begin{equation}
h_1(a)=4\pi\I \int_{\partial \Omega}\Big(\frac{\partial G(z,a)}{\partial z}\Big)^2\, d{z}.
\end{equation}
\end{lemma}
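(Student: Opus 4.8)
The plan is to prove the identity directly by a residue computation, exactly in the spirit of the derivation of the force formula (\ref{force}); the alternative is the Hadamard route already carried out in the displayed chain of equalities just above, but the residue argument is shorter and fully self-contained. The whole point is that $(\partial G/\partial z)^2\,dz$ is a meromorphic one-form on $\Omega$ whose only singularity is a double pole at $a$, so the boundary integral is computed by a single residue.

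First I would record the local behavior of $\partial G(z,a)/\partial z$ at the pole. Writing $G(z,a)=\frac{1}{2\pi}\big(-\log|z-a|+H(z,a)\big)$ as in (\ref{GlogH}), the relation $-\Delta G(\cdot,a)=\delta_a$ in (\ref{DeltaG}) together with $\Delta\log|z-a|=2\pi\delta_a$ forces $H(\cdot,a)$ to be harmonic on \emph{all} of $\Omega$, so $H(z,a)=\re\mathcal H(z,a)$ for some $\mathcal H(\cdot,a)$ holomorphic in $z$, with Taylor expansion $\mathcal H(z,a)=h_0(a)+h_1(a)(z-a)+\mathcal O((z-a)^2)$ by (\ref{TaylorH}). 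Since $\frac{\partial}{\partial z}\log|z-a|=\frac{1}{2(z-a)}$ and $\frac{\partial}{\partial z}H(z,a)=\frac12\mathcal H'(z,a)$, this gives
$$\frac{\partial G(z,a)}{\partial z}=\frac{1}{4\pi}\Big(-\frac{1}{z-a}+h_1(a)+\mathcal O(z-a)\Big),$$
which is exactly (\ref{w}) with $\Gamma=1$, and squaring reproduces (\ref{w2}):
$$\Big(\frac{\partial G(z,a)}{\partial z}\Big)^2=\frac{1}{16\pi^2}\Big(\frac{1}{(z-a)^2}-\frac{2h_1(a)}{z-a}+\text{regular}\Big).$$

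Next I would observe that $w(z):=\partial G(z,a)/\partial z$ satisfies $\partial w/\partial\bar z=\frac14\Delta G(\cdot,a)=0$ on $\Omega\setminus\{a\}$, hence is holomorphic there; therefore $\big(\partial G/\partial z\big)^2\,dz$ is a holomorphic one-form on $\Omega\setminus\{a\}$ with its only singularity a double pole at $a$. Applying Stokes' theorem to this closed form on $\Omega$ with a small disk $D_\varepsilon=\{|z-a|<\varepsilon\}$ removed — equivalently the residue theorem — yields
$$\int_{\partial\Omega}\Big(\frac{\partial G(z,a)}{\partial z}\Big)^2\,dz=\oint_{|z-a|=\varepsilon}\Big(\frac{\partial G(z,a)}{\partial z}\Big)^2\,dz=2\pi\I\,\res_{z=a}\Big(\frac{\partial G(z,a)}{\partial z}\Big)^2.$$
Reading off the residue from the expansion above as the coefficient of $1/(z-a)$, namely $-h_1(a)/(8\pi^2)$, gives $\int_{\partial\Omega}(\partial G/\partial z)^2\,dz=-\I h_1(a)/(4\pi)$, which rearranges to the asserted $h_1(a)=4\pi\I\int_{\partial\Omega}(\partial G/\partial z)^2\,dz$ since $-1/\I=\I$.

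The only genuinely delicate point is the contour bookkeeping: one needs that $(\partial G/\partial z)^2\,dz$ has no singularity in $\Omega$ other than at $a$ (guaranteed by harmonicity of $G(\cdot,a)$ away from the pole) and must track orientations so that $\partial\Omega$, positively oriented with respect to $\Omega$, deforms to the positively oriented circle about $a$. If $\Omega$ is unbounded or $\partial\Omega$ has several components, this is precisely where the mild regularity and behavior-at-infinity hypotheses implicit in Definition~\ref{def:Green} are used; everything else is the routine expansion already performed in Section~\ref{sec:Bernoulli}.
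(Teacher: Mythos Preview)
Your proof is correct. The residue computation is exactly right: the expansion of $(\partial G/\partial z)^2$ has residue $-h_1(a)/(8\pi^2)$ at $z=a$, and the holomorphicity of $\partial G/\partial z$ on $\Omega\setminus\{a\}$ justifies the contour deformation, including in the multiply connected case.

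Your route differs from the paper's displayed derivation, which arrives at the formula through the Hadamard variational formula: the paper computes $h_1(a)=\partial h_0(a)/\partial a$ by interpreting $\partial/\partial a_x$ and $\partial/\partial a_y$ as infinitesimal rigid translations of $\Omega$, feeds these into the variational identity (\ref{Hadamardh}) for $\delta h_0$, and then converts the resulting boundary integral of $(\partial G/\partial n)^2$ into the holomorphic form via a partial integration using $dG=0$ along $\partial\Omega$. The paper itself remarks, immediately after that chain of equalities, that the residue computation you carried out is in fact the most direct way to obtain the identity (it is essentially the force computation of Section~\ref{sec:Bernoulli} with $\psi=G$ and $\Gamma=1$). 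So what you have done is the shortcut the paper points to; what the Hadamard derivation buys is the explicit link between $h_1$ and domain variation, which is conceptually useful elsewhere (e.g.\ Remark~\ref{rem:Laplacian growth}) but unnecessary for the bare identity.
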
 

\begin{remark}\label{rem:Laplacian growth}
It is difficult to resist letting $\delta n$ in the Hadamard formula (\ref{Hadamard})
be proportional to the normal derivative of another Green function, $G(\cdot,c)$. 
Then we get a remarkable formula, which we write as
\begin{equation}\label{integrability}
\nabla(c) G(a,b)= \int_{\partial \Omega}\frac{\partial G(\cdot,a)}{\partial n}
\frac{\partial G(\cdot,b)}{\partial n} \frac{\partial G(\cdot,c)}{\partial n} ds.
\end{equation}
Here the differential operator $\nabla(c)$ represents the infinitesimal generator for {\it Laplacian growth} with a 
sink at $c$, 
a moving boundary  problem which has a fluid dynamic interpretation in terms of so-called {\it Hele-Shaw flow} 
(there are many other interpretations as well). This problem is of considerable importance,
and has interesting probabilistic interpretations, for example as DLA, {\it diffusion limited aggregation}.
See \cite{Gustafsson-Teodorescu-Vasiliev-2014} for further information, and references.

The fact that the expression (\ref{integrability}) is completely symmetric in $a,b,c$ expresses a certain {\it integrability
of the Dirichlet problem}, or ``zero curvature'', as it also has been called.
See \cite{Mineev-1990, Wiegmann-Zabrodin-2000, Kostov-Krichever-Mineev-Wiegmann-Zabrodin-2001, 
Krichever-Marshakov-Zabrodin-2005, Alekseev-Mineev-2017}, a selection from a long series of
highly interesting papers on integrable hierachies with applications to various areas of mathematical physics
(including fluid dynamics).
\end{remark}


\subsection{Capacity in higher dimensions}\label{sec:capacityRn}

In higher dimensions the capacity for a single conductor is less problematic than in two dimensions.
In any number of dimensions a good general setting is to consider a pair
$K\subset \Omega\subset\R^n$, where $K$ is compact and $\Omega$ open and bounded,
the condensers then being $\partial K$ and $\partial \Omega$. 
Let $u$ be the harmonic function  in $\Omega\setminus K$ which has boundary values $u=1$
on $\partial K$ and $u=0$ on $\partial \Omega$. 
It represents a unit potential jump, hence the capacity shall be the corresponding 
energy, namely (up to a factor) 
\begin{equation}\label{Enablau}
E=\int_{\Omega\setminus K}|\nabla u|^2 dx,
\end{equation}
where $dx$ represents the $n$-dimensional Euclidean measure (Lebesgue measure).

A good formal approach involves the Sobolev space $H_0^1(\Omega)$, for which the norm is the 
Dirichlet integral representing energy. In terms of an obstacle problem 
the capacity can then be defined as (see \cite{Treves-1975, Kinderlehrer-Stampacchia-1980})
\begin{equation}\label{cap}
{\rm Cap}\,(K)={\rm Cap}\,(K,\Omega)=\inf \{\int_\Omega |\nabla u|^2 dx: u\in H_0^1(\Omega), u\geq \chi_K\},
\end{equation}
$\chi_K$ denoting the characteristic  function of $K$ ($\chi_K=1$ on $K$, $\chi_K=0$ in $\Omega\setminus K$).

If the ambient space $\Omega$ is kept fixed then ${\rm Cap}(K)$ increases as $K$ increases
(meaning that the two condensers $\partial K$ and $\partial\Omega$ come closer to each other). 
Similarly, it decreases if $K$ is kept fixed and $\Omega$ increases. In dimension $n\geq 3$ 
it in fact decreases to a positive limit as $\Omega\to \R^n$. However not so in two dimensions,
then the limit of ${\rm Cap}(K,\Omega)$ becomes zero when $\Omega$ increases beyond bounds.

On the other hand, a good property in two dimensions is that the capacity is conformally invariant 
as depending on $\Omega\setminus K$.  
This is obvious because the integral to be minimized in (\ref{cap}) only depends on the conformal structure
(in dimension $n=2$). This can be seen by writing the Dirichlet integral in (\ref{cap}) on the form
$\int_{\Omega\setminus K} du\wedge *du$, where the star is the Hodge star. That star depends in two dimensions
only on the conformal structure when acting on one-forms because it then is equivalent to just a rotation of
the coordinates (for example, $*dx=dy$, $*dy=-dx$).

\begin{example}
For balls $K=\{|x|\leq r\}$, $\Omega=\{|x|<R\}$, $r<R$, one gets, for the potential $u$ discussed above,
$$
u(x)=A\log |x|+B= \frac{\log R-\log |x|}{\log R-\log r}\quad (n=2), 
$$
$$
u(x)=A|x|^{2-n}+B=\frac{|x|^{2-n}-R^{2-n}}{r^{2-n}-R^{2-n}} \quad (n\geq 3).
$$
Then $\int_\Omega  du\wedge\star du =\int_{\Omega\setminus K}  du\wedge\star du=\int_{\partial K}\star du$, which gives
\begin{equation}\label{cap2}
{\rm Cap}\,(K,\Omega)
=\frac{2\pi}{\log R-\log r}\quad (n=2),
\end{equation}
$$
{\rm Cap}\,(K,\Omega)=\frac{(n-2)|S^{n-1}|}{r^{2-n}-R^{2-n}}\quad (n\geq 3).
$$

Here one sees clearly that in two dimensions the capacity depends only on the quotient $R/r$, confirming that it is conformally invariant. 
This quotient exactly corresponds to the conformal type of the annulus $\Omega\setminus K$.  And in dimension $n\geq 3$ one sees
that the limiting capacity as $R\to\infty$ is
$$
{\rm Cap}(K)=(n-2)r^{n-2}|S^{n-1}|>0.
$$
When $n=3$ this gives ${\rm Cap}(K)=4\pi r$.
\end{example}


\subsection{The Bergman kernel for a planar domain}\label{sec:Bergman} 

By taking second derivatives of the Green function one reaches the Bergman
kernel and some related kernels. We shall be careful to take all derivatives in the sense of distributions (see below), 
and that will make our treatment look slightly different 
from what one usually sees in text books, for example 
\cite{Bergman-1970, Nehari-1952, Epstein-1965, Hedenmalm-Korenblum-Zhu-2000, Duren-Schuster-2004, Bell-2016}.

The singularity of the Green function $G(z,a)$ of a domain $\Omega\subset\C$ 
can be decomposed into an analytic and an anti-analytic part as
$$
\log|z-a|^2=\log(z-a)+\log(\bar{z}-\bar{a}).
$$
This gives
\begin{equation}\label{residue}
\frac{\partial}{\partial z}\log |z-a|^2=\frac{1}{z-a},
\end{equation}
where the right member is a locally integrable function, and there are  no distributional contributions (so far).
Taking next the distributional derivative of the right member, 
either with respect to $\bar{z}$ or with respect to $\bar{a}$, and with the other variable kept fixed, one gets
$$
\frac{\partial }{\partial \bar{z}}\frac{1}{z-a}=-\frac{\partial}{\partial \bar{a}}\frac{1}{z-a}={\pi} \delta (z-a).
$$
Here $\delta(z)$ denotes the ordinary Dirac distribution in the plane, and we have used that $1/({\pi z})$ is a {\it fundamental solution}
of $\partial/ \partial\bar{z}$.  The latter means that if $\varphi$ is a smooth test function with compact support in the plane, then
$$
\int_\C \big(\frac{\partial}{\partial \bar{z}}\frac{1}{\pi z}\big)\varphi (z)dxdy
=-\int_\C \frac{1}{\pi z}\frac{\partial\varphi}{\partial \bar{z}}dx\wedge dy
=-\lim_{\varepsilon\to 0}\frac{1}{2\pi \I}\int_{\{|z|>\varepsilon\}} \frac{1}{z}\frac{\partial\varphi}{\partial \bar{z}}d\bar{z}\wedge dz
$$
$$
=-\lim_{\varepsilon\to 0}\frac{1}{2\pi \I}\int_{\{|z|>\varepsilon\}} d\Big(\frac{1}{z}\varphi(z) dz\Big)
=\lim_{\varepsilon\to 0}\frac{1}{2\pi \I}\oint_{|z|=\varepsilon} \varphi(z)\frac{dz}{z}=\varphi (0).
$$
The first equality represents the definition of a {\it distributional derivative}, 
and the further equalities illustrate the use of Stokes' formula in a complex variable setting.
See \cite{Treves-1975} for more details on fundamental solutions. It follows from the above that
\begin{equation}\label{d2log}
\frac{\partial^2 \log |z-a|^2}{\partial z \partial \bar{z}}=-\frac{\partial^2 \log |z-a|^2}{\partial z \partial \bar{a}}={\pi}\delta (z-a).
\end{equation}

Again with $\varphi$ a smooth test function we have, since $G(\cdot,a)=0$ on $\partial\Omega$ for all $a\in\Omega$,
$$
0=\frac{\partial}{\partial a}\int_{\partial \Omega}\varphi (z)G(z,a)dz=
\int_{\partial \Omega}\varphi (z)\frac{\partial G(z,a)}{\partial a}dz
$$
$$
=\int_{\Omega}d\Big(\varphi (z)\frac{\partial G(z,a)}{\partial a}dz\Big)
=\int_{ \Omega}\frac{\partial \varphi}{\partial \bar{z}}\frac{\partial G(z,a)}{\partial a}d\bar{z}dz
+\int_\Omega \varphi(z){\frac{\partial^2 G(z,a)}{\partial \bar{z}\partial {a}}} d\bar{z}dz.
$$
Now, if $\varphi$ is analytic in $\Omega$ the first term disappears, and changing notation from
$\varphi(z)$ to $f(z)$ we thus have
\begin{equation}\label{intfG}
\int_\Omega f(z){\frac{\partial^2 G(z,a)}{\partial \bar{z}\partial {a}}} d\bar{z}dz=0
\end{equation}
for functions $f(z)$ analytic in $\Omega$ and, say, smooth up to $\partial\Omega$.
On decomposing $G(z,a)$ as in (\ref{GlogH}), and using (\ref{d2log}) this gives 
$$
\int_\Omega f(z){\frac{\partial^2 H(z,a)}{\partial \bar{z}\partial {a}}} d\bar{z}dz=-\I{\pi}f(a).
$$
It follows that
\begin{equation}\label{Bergman}
K(z,a)=-\frac{2}{\pi}{\frac{\partial^2 H(z,a)}{\partial {z}\partial \bar{a}}}
\end{equation}
is the Bergman kernel of $\Omega$, which is characterized by its reproducing property:
\begin{definition}\label{def:Bergman}
The {\it Bergman kernel} of $\Omega$ is the unique square integrable analytic function in $\Omega$ 
having the reproducing property that
\begin{equation}\label{Kreproducing0}
\int_\Omega f(z)\overline{K(z,a)}dxdy=f(a)
\end{equation}
for all square integrable analytic functions $f$ in $\Omega$.
\end{definition}

If we specialize (\ref{Bergman}) to the diagonal $z=a$ 
and use (\ref{h00H}) to express the result in terms of $h_0(z)=H(z,z)$ (see (\ref{TaylorH})) we obtain
the identity
\begin{equation}\label{Deltah0K}
-\Delta h_0(z)=4\pi K(z,z). 
\end{equation}

\begin{remark}\label{rem:Fredholm}
Kernels, or integral kernels in general, trace back  to the work of Ivar Fredholm, who developed a theory
for integral equations, see \cite{Fredholm-1903, Courant-Hilbert-1943}.
 Later David Hilbert raised the theory to a more abstract level for which the kernels
represent linear operators in a Hilbert space. The Bergman kernel can be viewed in this context as 
representing the orthogonal projection of the full Lebesgue space $L^2(\Omega)$ of a domain
onto its subspace $L^2_a(\Omega)$ of analytic functions. When restricted to
$L^2_a(\Omega)$ it simply becomes the identity operator, thus 
reproducing the values of a function. This gives the defining property (\ref{Kreproducing0}).

The property of representing the identity operator also shows up in the expansion of the Bergman kernel along 
an arbitrary orthonormal basis $\{e_n\}$:
$$
K(z,a)=\sum_{n=1}^\infty e_n(z)\overline{e_n(a)}.
$$ 
Here each term represents the orthogonal projection onto a one-dimensional subspace (that generated by the vector $e_n$
in question). Later we shall identify the above (standard) Bergman kernel as the {\it electrostatic Bergman kernel}, to distinguish
it from a {\it hydrodynamic Bergman kernel} with a reproducing property on a certain subspace, and hence represented 
with a shorter sum above. See Section~\ref{sec:hydrodynamic Bergman}. The missing terms in the sum will turn out to represent
a Bergman kernel relevant on closed Riemann surfaces, see Sections~\ref{sec:monopole Green}-\ref{sec:planar domains} 
below. Our terminology  (electrostatic/hydrodynamic) is not standard in other texts, but is inspired by \cite{Cohn-1980} and has been used in 
\cite{Grotta-Ragazzo-Gustafsson-Koiller-2024}.

\end{remark}


\section{The monopole Green function on closed surfaces}\label{sec:monopole Green}

\subsection{Definition of the monopole Green function}\label{sec:definition Green}

On a closed Riemann surface (compact Riemann surface without boundary) $M$ there is no Green function with just a single pole because the
strengths of the logarithmic poles must add up to zero. Traditionally, given a desired pole like $-\frac{1}{2\pi\I}\log|z-a|$
at a given point $a\in M$, one introduces a counter-pole $+\frac{1}{2\pi\I}\log |z-b|$ at some other point $b\in M$ to obtain
a function with singularity structure 
\begin{equation}\label{V}
V(z)=\frac{1}{2\pi}\big(-\log |z-a|+\log |z-b|\big)+{\rm harmonic}.
\end{equation}
After a normalization, requiring $V(z)$ to vanish at a third point $z=w$, $V(z)=V(z,w;a,b)$ becomes uniquely determined
and has  natural symmetry properties (see for example \cite{Gustafsson-Sebbar-2012}). 
This is illustrated by its explicit form in the case of the Riemann sphere, for which it
(up to a factor) becomes the logarithm of the modulus of the {\it cross ratio}:
\begin{equation}\label{Vcross ratio}
V(z,w;a,b)=-\frac{1}{2\pi}\log\big| \frac{(z-a)(w-b)}{(z-b)(w-a)}\big|=-\frac{1}{2\pi}\log|(z:w:a:b)|.
\end{equation}

With $V(z,w;a,b)$ as a building block, in the case of a general compact Riemann surface, 
practically all basic harmonic and analytic functions,
as well as differentials of different sorts, can be easily constructed. This was indeed done in the classical era by Riemann, Klein
and their followers, see for example \cite{Weyl-1964, Schiffer-Spencer-1954}. 

If the Riemann surface is provided with a Riemannian metric compatible with the conformal structure, namely of the form
\begin{equation}\label{dsdz}
ds^2=\lambda(x,y)^2(dx^2+dy^2)=\lambda(z)^2 |dz|^2 \quad (\lambda>0),
\end{equation}
then there is, besides the mentioned classical procedure, another route to construct the basic harmonic and analytic objects.
This goes via the {\it monopole Green function}, and has previously been used in \cite{Lang-1988, Takhtajan-2001}.
First of all, the metric (\ref{dsdz}) gives rise to its corresponding two-dimensional volume form (area form) which,
following the notations of \cite{Frankel-2012}, is 
\begin{equation}\label{volume form}
{\rm vol}=\lambda(x,y)^2 dx\wedge dy= \frac{\lambda (z)^2}{2\I}\,d\bar{z}\wedge dz. 
\end{equation}
Clearly, this contains exactly the same information as the metric.

There is also the {\it Hodge star} operator defined on basic forms by
$*1={\rm vol}$, $*dx=dy$, $dy=-dx$, $*{\rm vol}=1$.
For complex basic one-forms this gives $*dz=-\I dz$, $*d\bar{z}=\I d\bar{z}$.
The Riemann surface itself will be assumed to be closed (compact) of genus $\texttt{g}$, and provided with a canonical 
homology basis $\alpha_1, \dots, \alpha_\texttt{g}$, $\beta_1, \dots, \beta_\texttt{g}$. This means that 
$\alpha_k$ does not intersect $\beta_j$ if $k\ne j$ and that $\alpha_k$ and $\beta_k$ intersect like the $x$-axis
and $y$-axis in an ordinary Cartesian coordinate system.
See \cite{Forster-1981, Farkas-Kra-1992} for relatively recent introductory texts on Riemann surfaces.

Now, in the presence of the metric (\ref{dsdz}) there is the option to replace, in the potential $V(z,w;a,b)$, 
the counter-pole at $z=b$ with an extended sink proportional to ${\rm vol}$.
One can view this replacement as a form of balayage, more precisely partial balayage \cite{Gustafsson-2004, Gustafsson-Roos-2018}. 
We shall work more exactly with the so obtained {monopole Green function}. 
Let $V={\rm vol}(M)$ denote the volume (area) of $M$.

\begin{definition}
The {\it monopole Green function} $G(z,a)$ is the unique solution of
\begin{equation}\label{ddGdelta}
-d*dG(\cdot,a) =\delta_a -\frac{1}{V}\,{\rm vol}
\end{equation}
subject to the normalization 
\begin{equation}\label{normalization}
\int_M G(\cdot,a)\, {\rm vol}=0.
\end{equation}
\end{definition}

Note that everything is written in the formalism of differential forms, and in particular
the Dirac measure $\delta_a$ is considered as the two-form current (the area form thus built in)
determined by 
\begin{equation}\label{varphideltaa}
\int_M \varphi\wedge  \delta_a=\varphi(a)
\end{equation}
for any test function $\varphi$ on $M$.

The counter pole at $z=b$ has been replaced by a sink distributed all over the surface, 
more precisely a multiple of ${\rm vol}$,  and (\ref{normalization}) substitutes the 
previous normalization, which  asked the potential $V$ in (\ref{V}) to vanish at  $z=w$.
The requirements (\ref{ddGdelta}), (\ref{normalization}) determine $G(\cdot, a)$ uniquely, and by partial integration the identity
\begin{equation}\label{mutual energyG}
G(a,b)=\int_M dG(\cdot,a)\wedge *dG(\cdot,b),
\end{equation}
follows directly. It presents $G(a,b)$ as the mutual energy between two point charges/vortices, one at $a$ and one at $b$,
and it has the symmetry
$$
G(a,b)=G(b,a)
$$  
automatically built in. 

The equation (\ref{mutual energyG}) might seem dubious from the point of view of dimensional analysis, but one has to 
keep in mind that there is an implicit unit charge (in electrostatic language) in the right member of (\ref{ddGdelta}).
This is to be multiplied with the left member in (\ref{mutual energyG}), which then is given dimension
charge times potential, which is energy. 
The right member is a standard energy expression, a Dirichlet integral, like in (\ref{Enablau}). See further below. 

To elaborate the above one can think of $dG$ as a field strength and $*dG$ as the corresponding current (or displacement field,
or induction, depending on the context).  This is more obvious in a three dimensional interpretation, where $dG$ is a
one-form and $*dG$ a two-form. The latter can be integrated over a surface which then gives the current flowing through
that surface. An even more refined picture can be obtained by distinguishing between ordinary differential forms and {twisted forms}
(or pseudo forms), as is done in \cite{Burke-1983, Burke-1985, Frankel-2012}.  Then $dG$ is an ordinary from,
representing a field strength, while $*dG$ is a {\it twisted form} representing some kind of ``quantity'' (amount of current). 
Also the volume (or area) form ${\rm vol}$ is a twisted form in this language. However, we shall not go further into this direction.
 
In view of (\ref{ddGdelta}) the potential $G(\cdot,a)$ is not harmonic anywhere, still it can be used to construct
all fundamental harmonic and analytic objects on $M$.
From a more general perspective $G(\cdot,a)$ can be viewed as the result of a Hodge decomposition of the two-form $\delta_a$.
Following \cite{Gustafsson-2022a, Grotta-Ragazzo-Gustafsson-Koiller-2024} we can write
\begin{equation}\label{Hodgedeltaa}
\delta_a= d({\rm something})+{\rm harmonic \ form},
\end{equation}
and in the fluid picture this is exactly (\ref{ddGdelta}), with the harmonic form then being the distributed counter vorticity. 
One should notice that since the only harmonic functions on a compact surface are the constants, and since the Hodge star
of a constant is a multiple of the volume form, the harmonic two-forms are exactly the multiples of this volume form.
The above procedure gives the Green function  (\ref{ddGdelta}), (\ref{normalization})
by what is effectively an orthogonal projection. Compare Remark~\ref{rem:phil}.

Slightly more generally, any vorticity distribution $\omega$ can be decomposed as
\begin{equation}\label{omegaddG}
\omega=-d*d G^\omega +c\cdot {\rm vol}, \quad c=\frac{1}{V}\int_M\omega.
\end{equation}
The {\it Green potential} $G^\omega$ is defined similarly as $G^{\delta_a}$ in (\ref{ddGdelta}) and with a normalization as in (\ref{normalization}).
In the Hodge decomposition (\ref{omegaddG}), the last term, the counter vorticity, can somewhat in analogy with dark matter and 
dark energy in cosmology be thought of as  a {\it dark vorticity} which permeates all of $M$. It is present whenever $\int_M \omega\ne 0$,
and it is ``dark'' for example in the sense that it possesses no energy, see (\ref{Evolvol}) below. 

The kinetic energy of a flow one-form is naturally expressed in terms of the inner product, representing {\it mutual energy}
for flow fields:
\begin{equation}\label{mutual energy}
(\nu_1,\nu_2)=\int_M \nu_1\wedge*\nu_2.
\end{equation}
In terms of potentials this becomes the Dirichlet integral $(du_1,du_2)$. For vorticity two-forms it defines the mutual energy as follows.
\begin{definition}\label{def:mutual energy}
The {\it mutual energy} between two vorticity distributions is
$$
\mathcal{E} (\omega_1, \omega_2)=(dG^{\omega_1}, dG^{\omega_2})=\int_M G^{\omega_1}\wedge \omega_2.
$$  
\end{definition}  
Choosing $\omega_1=\omega_2={\rm vol}$ and using (\ref{normalization}) gives that
\begin{equation}\label{Evolvol}
\mathcal{E}({\rm vol}, {\rm vol})=0,
\end{equation}
as claimed. It is also interesting to notice that (\ref{mutual energyG}) can be expressed as
$$
G(a,b)=\mathcal{E}(\delta_a, \delta_b).
$$

Clearly, the four variable potential mentioned in the beginning of this section is, in terms of $G(z,a)$,
\begin{equation}\label{VG}
V(z,w;a,b)=G(z,a)-G(z,b)-G(w,a)+G(w,b).
\end{equation}


\subsection{Local properties}\label{sec:local properties}

Decomposing next, in a local coordinate $z$ and keeping $a$ fixed in the same coordinate patch,  
the monopole Green function into its singular and regular parts as
\begin{equation}\label{GHlog}
G(z,a)=\frac{1}{2\pi}\Big(-\log |z-a|+H(z,a)\Big),
\end{equation}
one can expand the regular part in a Taylor series around $z=a$ as
\begin{equation}\label{HTaylor1}
H(z,a)=h_0(a)+\frac{1}{2}\Big(h_1(a)(z-a)+\overline{h_1(a)}(\bar{z}-\bar{a})\Big)+
\end{equation}
\begin{equation}\label{HTaylor2}
+\frac{1}{2}\Big(h_2(a)(z-a)^2+\overline{h_2(a)}(\bar{z}-\bar{a})^2\Big)+h_{11}(a)(z-a)(\bar{z}-\bar{a})+\mathcal{O}(|z-a|^3). 
\end{equation}
This expansion differs from (\ref{TaylorH}) by the presence of non-harmonic terms, 
most importantly the one with coefficient $h_{11}(a)$. 
Note that $H(z,a)$, unlike $G(z,a)$, is only locally defined, namely near the diagonal with $z$ close to $a$, and it moreover
depends on the choice of local coordinate since the balance between the two terms in (\ref{GHlog}) changes under a coordinate
transformation. 
However, certain derivatives of $H(z,a)$ are still meaningful global objects as being coefficients of suitable double differentials.

One may, to this purpose, start from (\ref{ddGdelta}), 
which rephrased in terms of $H(z,a)$ and the present local variables becomes, in view of (\ref{d2log}),
$$
\frac{\partial^2 G(z,a)}{\partial z \partial \bar{z}}
=-\frac{1}{2\pi}\frac{\partial^2}{\partial z\partial \bar{z}}\log |z-a|
+\frac{1}{2\pi}\frac{\partial^2 H(z,a)}{\partial z \partial \bar{z}}
$$
\begin{equation}\label{dzzG}  
= -\frac{1}{4}\delta (z-a)+\frac{\lambda (z)^2}{4V},
\end{equation}
where the last term comes from comparison of the left member with (\ref{ddGdelta}).
Here $\delta (z-a)$ is to be interpreted as the usual Dirac distribution in terms of the coordinate $z$,
evaluating the value at $z=a$ upon integrating functions 
with respect to the area form $dxdy$. 

For the second term it follows that
\begin{equation}\label{Hlambda}
\frac{\partial^2 H(z,a)}{\partial z \partial \bar{z}}=\frac{\pi\lambda (z)^2}{2V},
\end{equation} 
in particular that the left member of (\ref{Hlambda}) is independent of $a$: 
\begin{equation}\label{Hazz}
\frac{\partial }{\partial {a}}\frac{\partial^2 H(z,a)}{\partial z\partial \bar{z}}
=\frac{\partial }{\partial \bar{a}}\frac{\partial^2 H(z,a)}{\partial z\partial \bar{z}}=0.
\end{equation}
Reversing the order of differentiation,
\begin{equation}\label{Hzza}
\frac{\partial^2 }{\partial z\partial \bar{z}}\frac{\partial H(z,a)}{\partial a}
=\frac{\partial^2 }{\partial z\partial \bar{z}}\frac{\partial H(z,a)}{\partial \bar{a}}=0,
\end{equation}
hence the $a$-derivatives of $H(z,a)$ are harmonic with respect to $z$. 
In the same vein we have
\begin{equation}\label{Hzaz}
\frac{\partial }{\partial \bar{z}}\frac{\partial^2 H(z,a)}{\partial z\partial a}
=\frac{\partial}{\partial \bar{z}}\frac{\partial^2 H(z,a)}{\partial z\partial \bar{a}}=0.
\end{equation}
Thus the mixed second order derivatives of $H(z,a)$ in (\ref{Hzaz}) are analytic as functions of $z$.
Moreover, these mixed derivatives do not depend on the metric.

However, as previously remarked, $H(z,a)$ is not really a function on $M$ since the logarithmic term in (\ref{GHlog}) 
only makes sense when $z$ and $a$ are within the same coordinate patch.
On the other hand, $G(z,a)$ as a whole is a well-defined function on $M$, and we have
\begin{equation}\label{GHza}
-4\frac{\partial^2 G(z,a)}{\partial z\partial {a}}
=\frac{1}{\pi (z-a)^2}-\frac{2}{\pi}\frac{\partial^2 H(z,a)}{\partial z\partial {a}}
\end{equation}
close to the diagonal and in terms of any local coordinate (the same for $z$ and $a$).  
The point with this formula is that it exhibits the singularity structure of the left member in a clear way.
One sees that  quantity in (\ref{GHza}), as depending on $z$ and for any fixed $a$, 
is the coefficient of a basic {\it Abelian differential of the second kind} on $M$,
``second kind'' referring to having only residue free poles (namely the one visible in the right member above). 
  
As for  the derivative $\partial^2 H(z,a)/\partial z\partial \bar{a}$, 
the logarithmic term in $G(z,a)$ contributes only with a point mass at $z=a$, as in (\ref{d2log}), 
and this can be isolated and be given an independent meaning. On writing
\begin{equation}\label{HGza}
-4\frac{\partial^2 G(z,a)}{\partial z\partial \bar{a}}
=-\delta(z-a)-\frac{2}{\pi} \frac{\partial^2 H(z,a)}{\partial z\partial \bar{a}},
\end{equation}
the last term is therefore analytic/anti-analytic on all of $M\times M$ when interpreted as the coefficient of a $dzd\bar{a}$-double differential.
Clearly, such a thing must be a fundamental quantity.  This leads to
\begin{definition}\label{def:Bergman and Schiffer}
The {\it Bergman kernel} for the global holomorphic one-forms on a closed Riemann surface $M$ is
\begin{equation}\label{K}
K(z,a)dzd\bar{a}=-\frac{2}{\pi}\frac{\partial^2 H(z,a)}{\partial z \partial\bar{a}}dzd\bar{a}.
\end{equation}
The accompanying kernel (\ref{GHza}),
\begin{equation}\label{L}
L(z,a)dzda=-4\frac{\partial^2 G(z,a)}{\partial z\partial {a}}dzda
\end{equation}
is in \cite{Schiffer-Spencer-1954} just called the ``$L$-kernel'', but following several other sources, 
including \cite{Takhtajan-2001, Wolpert-2018}, we shall use the name {\it Schiffer kernel} for it. 
\end{definition}

We note from (\ref{GHza}) the singularity structure 
\begin{equation}\label{singularityL}
L(z,a)dzda=\frac{dzda}{\pi(z-a)^2}+\text{regular}
\end{equation}
of the Schiffer kernel, while the Bergman kernel itself is completely regular. 
Note also that $L(z,a)dzda$ is symmetric in $z$ and $a$, while for the Bergman kernel we have
a Hermitean symmetry: 
\begin{equation}\label{symmetryK}
K(a,z)dad\bar{z}=\overline{K(z,a)dzd\bar{a}}.
\end{equation}

Usually one needs to specify period requirements for Abelian differentials, but we
emphasize that the Bergman and Schiffer kernels as defined above for a closed Riemann surface
are completely canonical.  They
neither depend on any period requirements or on any underlying metric, the latter despite they were defined using the monopole Green function,
which does depend on the metric. But one could equally well have defined them using the potential $V(z,w;a,b)$ in (\ref{VG}), and in that case the
definitions above become identical with the corresponding definitions in beginning of Section~4.10 of \cite{Schiffer-Spencer-1954}.

The Bergman and Schiffer kernels in principle live on separate closed surfaces, but they are connected by their 
relations to the Green function. This implies that their periods are linked, see (\ref{gammaLK}) below. 
One may also provide the two surfaces with opposite conformal structures and then view the total disconnected surface as
a double of each of the pieces. Such points of views are put forward in \cite{Schiffer-Spencer-1954}, where even ''topological surgery''
is discussed, like cutting small holes in the surfaces and connecting them by electric wires. See Section~4.2 and Chapter~7 
in \cite{Schiffer-Spencer-1954}.

When discussing doubles of planar domains in Section~\ref{sec:planar domains}, the Bergman and Schiffer kernels will become 
even more tight, and they are in certain combinations  glued along the boundary. 
See Proposition~\ref{prop:KKL} and (\ref{LKboundary}) below (next section).


\subsection{Harmonic one-forms}\label{sec:harmonic forms}

Stepping down from second order to first order derivatives we have, by
differentiating the monopole Green function with respect to $z$ and taking Hodge stars, 
$$
dG(z,a)=\frac{\partial G(z,a)}{\partial z}dz+\frac{\partial G(z,a)}{\partial \bar{z}}d\bar{z}=2\re \big(\frac{\partial G(z,a)}{\partial z}dz\big),\qquad 
$$
\begin{equation}\label{stardG0}
*dG(z,a)=-\I\frac{\partial G(z,a)}{\partial z}dz+\I\frac{\partial G(z,a)}{\partial \bar{z}}d\bar{z}=2\im \big(\frac{\partial G(z,a)}{\partial z}dz\big).
\end{equation}
For the combined complex differential this gives the pole structure
$$
dG(z,a)+\I *dG(z,a) =2\frac{\partial G(z,a)}{\partial z}dz
$$
\begin{equation}\label{dGistardG}
=\frac{1}{2\pi}\Big(-\frac{dz}{z-a}+2\frac{\partial H(z,a)}{\partial z}dz\Big),
\end{equation}
for $z$ close to $a$. 
Here the last term is in general not harmonic with respect to $z$ because of the identity (\ref{Hlambda}).
However, remarkably, it is harmonic with respect to $a$ as a consequence of the identity (\ref{Hzza}) with 
interchanged roles between $z$ and $a$. Recall that $H(z,a)$ is symmetric in $z$ and $a$ (by (\ref{GHlog})),
but it is not harmonic in any of these variables. 

Concerning periods around cycles, the differential $dG(\cdot,a)$ is exact and has no periods,
while $*dG(\cdot,a)$ does have periods. 
As a consequence of what just have been said about harmonicity,
these periods depend harmonically on $a$, as long as this point stays away from the cycle itself. 
More precisely, let $\gamma$ be a cycle (closed oriented curve)
in $M\setminus\{a\}$. Then integrating $*dG(\cdot,a)$ along $\gamma$ defines a function 
\begin{equation}\label{Ugamma}
U_\gamma(a)=\oint_\gamma *dG(\cdot,a)=-\I\oint_\gamma \big(dG(\cdot,a)+\I*dG(\cdot,a)\big)
\end{equation}
$$
=-2\I \oint_\gamma\frac{\partial G(z,a)}{\partial z}dz
=\frac{1}{2\pi\I}\oint_\gamma\Big(-\frac{dz}{z-a}+2\frac{\partial H(z,a)}{\partial z}dz\Big),
$$
which away from $\gamma$ is harmonic. This harmonic part may be recognized as being the
{\it basic harmonic integral associated to the cycle $\gamma$}.
As for the (non-harmonic) behavior on $\gamma$ itself,
it is easy to see from the residue term in the last expression
that $U_\gamma (a)$ steps up by one unit when $a$ crosses $\gamma$
from the left hand side of it to the right hand side. 

It follows that the restriction of $U_\gamma$ to $M\setminus \gamma$ has indefinite
harmonic  extensions across $\gamma$ and then becomes a multi-valued harmonic
function on $M$, with different branches differing by integers. Classically one simply
accepts this multi-valuedness and works with it without problems. This was also done in 
\cite{Gustafsson-2022a, Grotta-Ragazzo-Gustafsson-Koiller-2024}.  
Or else one notices that the function becomes single-valued on the {\it universal covering surface}
(see \cite{Ahlfors-1973}) of $M$. 
Thus the multi-valuedness of the harmonic part of $U_\gamma$ is no real problem, and it disappears 
under differentiation. For example, the multi-valued functions $U_\gamma$ can be used directly in 
Definition~\ref{def:omegagamma} below.

Still we prefer in this paper to work with $U_\gamma$ as defined almost everywhere on 
$M$ by (\ref{Ugamma}) and to take derivatives of it in the sense of distributions. 
This means that
the resulting differential $dU_\gamma$ will contain a singular contribution along $\gamma$.  
Like for the Green function itself, with its decomposition (\ref{GHlog}) into regular and singular parts,
the distributional one-form $dU_\gamma$ therefore decomposes into a regular harmonic one-form, 
to be denoted $\eta_\gamma$, 
and a distributional contribution on $\gamma$, which we shall denote $\delta_\gamma$. This $\delta_\gamma$ is really
a one-form ``current'', a differential form with distributional coefficients. 
See in general \cite{deRham-1984, Federer-1969, Griffiths-Harris-1978} for the terminology. 
If for example $\gamma$ is the $x$-axis (in a local coordinate), then 
 $\delta_\gamma=d\chi_{\{y<0\}}=-\delta(y)dx$ in the present situation, $\delta(y)$ denoting the ordinary Dirac
distribution in the real variable $y$.

In conclusion, we have the decomposition
\begin{equation}\label{deltaeta}
dU_\gamma= {\rm current \, along\,\,}\gamma +{\rm harmonic \,form}
=\delta_\gamma+\eta_\gamma.
\end{equation}
When integrating $dU_\gamma$ along another closed curve $\sigma$ which intersects $\gamma$, once
from the right to the left, the definition of $\delta_\gamma$ means that $\oint_\sigma \delta_\gamma=-1$.
Since necessarily $\oint_\sigma \delta_\gamma+\oint_\sigma \eta_\gamma=\oint_\sigma dU_\gamma=0$
it follows that $\oint_\sigma \eta_\gamma=+1$. The latter can also be expressed on the form
$$
\int_M \eta_\gamma \wedge \eta_\sigma=+1.
$$
Indeed, since $U_\gamma$ is harmonic in $M\setminus\gamma$ and jumps down one unit as $\sigma$ 
crosses the two-sided boundary $\partial_\pm (M\setminus\gamma)$ , the left member equals
$$
\int_M \eta_\gamma\wedge \eta_\sigma
=\int_{M\setminus \gamma} dU_\gamma\wedge \eta_\sigma
=\int_{M\setminus \gamma} d(U_\gamma \eta_\sigma)
$$
$$
=\int_{\partial_\pm (M\setminus \gamma)}U_\gamma\eta_\sigma
=\oint_\gamma(-1)\eta_\sigma=\oint_\sigma\eta_\gamma.
$$ 

More generally, assuming that  $\sigma$ is any cycle, which may have several 
crossings with $\gamma$, and defining the {\it intersection number} 
$\gamma\times\sigma=-\oint_\gamma \eta_\sigma$ 
as the number of such crossings, it follows that
\begin{equation}\label{gammacrosssigma1}
\int_M \eta_\gamma\wedge \eta_\sigma=\gamma\times\sigma.
\end{equation}
It is easy to see that $\eta_\gamma$ depends only on the homology class of $\gamma$, namely that
$\eta_\gamma=0$ whenever $\gamma =\partial D$ with $D$ is a subdomain of $M$.
One then concludes that the map $\gamma\mapsto \eta_\gamma$ is an isomorphism,
preserving also product structure as in (\ref{gammacrosssigma1}), 
of the first homology group of $M$  onto the first
de Rham cohomology group as represented uniquely by harmonic forms. See \cite{deRham-1984, Warner-1983}
for the terminology. 

Completing next $dU_\gamma$ in (\ref{deltaeta}) with a corresponding imaginary part gives in principle
$$
dU_\gamma+\I *dU_\gamma =(\delta_\gamma +\I*\delta_\gamma)+ (\eta_\gamma+\I*\eta_\gamma).
$$
The meaning of the first term in the right member is best explained in terms of the example just above, 
where $\delta_\gamma=-\delta(x)dy$.
In that case we will have $\delta_\gamma+\I *\delta_\gamma=-\delta(y)(dx+\I*dy)=-\delta(y) dz$.
However, it is really the holomorphic part $\eta_\gamma+\I*\eta_\gamma$ that we are interested in. 
To identify it we differentiate the last expression in (\ref{Ugamma}) for $a\notin\gamma$ to obtain 
\begin{equation}\label{dUstardUH}
(\eta_\gamma +\I*\eta_\gamma)(a)=2\frac{\partial U_\gamma(a)}{\partial a}da
=-4\I\oint_\gamma\frac{\partial^2G(z,a)}{\partial z\partial a}dzda
\end{equation}
\begin{equation}\label{dUstardUH2}
=+4\I\oint_\gamma\frac{\partial^2G(z,a)}{\partial \bar{z}\partial a}d\bar{z}da
=-\frac{2}{\I\pi}{\oint_\gamma} \frac{\partial^2 H(z,a)}{\partial \bar{z}\partial {a}}d\bar{z}d{a}.
\end{equation}
We summarize in terms of the Bergman and Schiffer kernels:
\begin{definition}\label{def:omegagamma} 
The {\it basic holomorphic and harmonic one-forms}, $\omega_\gamma$ and $\eta_\gamma$, associated to a given cycle $\gamma$
in $M$ are  
\begin{align*}
\eta_\gamma&=\text{the regular (harmonic) part of } \, dU_\gamma,\\
\omega_\gamma&=\text{the regular (holomorphic) part of }\,  dU_\gamma+\I *dU_\gamma.
\end{align*}
and they are related to the Bergman kernel (\ref{K}) and Schiffer kernel (\ref{L}) via
(for any $a\in M$)
\begin{align}\label{omegaeta}
\omega_\gamma(a)=\eta_\gamma(a)+\I*\eta_\gamma(a)
&=\I \oint_\gamma L(z,a)dzda\\ 
&=-\I \oint_\gamma \overline{K(z,a)dzd\bar{a}}.
\end{align}
In particular the Bergman and Schiffer kernels are linked by
\begin{equation}\label{gammaLK}  
 \oint_\gamma L(z,a)dzda+\oint_\gamma \overline{K(z,a)dzd\bar{a}}=0,
\end{equation}
holding for any closed curve $\gamma$
(integration with respect to $z$).
\end{definition}

Of course, (\ref{gammaLK}) just expresses that the combined integrand is (essentially) exact.
More precisely we have, from (\ref{HGza}), (\ref{K}), (\ref{L}), that 
$$
d\big(\frac{\partial G(z,a)}{\partial a} \big)=L(z,a)dz+ \overline{K(z,a)}d\bar{z}+\frac{1}{4}\delta(z-a)d\bar{z},
$$
and the Dirac contribution does not affect the line integral in (\ref{gammaLK}). 


\subsection{Period matrices}\label{sec:period matrices}

Next we proceed to choose $\gamma=\alpha_j,\beta_j$ in the above construction. This gives bases for the harmonic and analytic differentials.  
The harmonic differentials are naturally ordered as
$\{-\eta_{\beta_1},\dots, -\eta_{\beta_\texttt{g}}, \eta_{\alpha_1}, \dots, \eta_{\alpha_\texttt{g}} \}$,
matching the chosen homology basis in the sense that
\begin{equation}\label{etaalpha}
\oint_{\alpha_k} (-\eta_{\beta_j})=\delta_{kj}, \quad \oint_{\alpha_k} \eta_{\alpha_j}=0,
\end{equation}
\begin{equation}\label{etabeta}
\ \oint_{\beta_k} (-\eta_{\beta_j})=0, \qquad \oint_{\beta_k} \eta_{\alpha_j}=\delta_{kj}. 
\end{equation}

It is convenient to summarize formulas like this in block matrix form. We then arrange cycles and harmonic forms into columns
and use superscript $T$ to denote transpose. Involving also the Hodge star we then have, for example,
\begin{equation}\label{block}
\alpha=
\left(\begin{array}{c}
{\alpha_1}\\
\vdots\\
{\alpha_\texttt{g}}
\end{array}\right),
\quad
\eta_\alpha=
\left(\begin{array}{c}
\eta_{\alpha_1}\\
\vdots\\
\eta_{\alpha_\texttt{g}}
\end{array}\right),
\quad
*\eta_\alpha^T=
\left(\begin{array}{ccc}
*\eta_{\alpha_1}&
\dots &
*\eta_{\alpha_\texttt{g}}
\end{array}\right).
\end{equation}
In such a notation (\ref{etaalpha}), (\ref{etabeta}) become the single matrix equation
\begin{equation}\label{periods}
\left( \begin{array}{cc}
-\oint_{\alpha} \eta_{\beta}^T& -\oint_{\beta} \eta_{\beta}^T\\
\oint_{\alpha} \eta_{\alpha}^T & \oint_{\beta} \eta_{\alpha}^T 
\end{array} \right)
=\left( \begin{array}{cc}
                                            I & 0  \\
                                             0 & I  \\
\end{array} \right).
\end{equation}

The conjugate periods also form a period matrix, which we write as
\begin{equation}\label{PRRQ}
\left( \begin{array}{cc}
                                            P & R  \\
                                              R^T & Q  \\
\end{array} \right)=
\left( \begin{array}{cc} 
                                             - \oint_{\beta}*\eta_{\beta}^T & \oint_{\beta}*\eta_{\alpha}^T  \\
                                               \oint_{\alpha}*\eta_{\beta}^T & ( -\oint_{\alpha}*\eta_{\alpha}^T  \\
\end{array} \right).
\end{equation}
Spelled out in detail this is
$$
P_{kj}=-\oint_{\beta_k}\eta_{\beta_j}, \quad Q_{kj}=-\oint_{\alpha_k}\eta_{\alpha_j}, \quad R_{kj}=\oint_{\beta_k}\eta_{\alpha_j}.
$$
Using the general identity (see Proposition~III.2.3 in \cite{Farkas-Kra-1992})
\begin{equation}\label{general identity}
\int_M \sigma\wedge\tau=\sum_{j=1}^\texttt{g} \Big( \int_{\alpha_j}\sigma \int_{\beta_j}\tau -\int_{\alpha_j}\tau\int_{\beta_j}\sigma\Big),
\end{equation}
which holds for arbitrary closed one-forms $\sigma$ and $\tau$ on $M$,
the period matrix takes the form of an energy tensor (kinetic energy in the fluid picture):
\begin{equation}\label{ointdU}
\left( \begin{array}{cc}
                                            P & R  \\
                                              R^T & Q  \\
\end{array} \right)
=
\left( \begin{array}{cc}
                                             \int_M \eta_{\beta}\wedge *\eta_{\beta}^T &  -\int_M \eta_{\beta}\wedge *\eta_{\alpha}^T  \\
                                              - \int_M \eta_{\alpha}\wedge*\eta_{\beta}^T &   \int_M \eta_{\alpha}\wedge *\eta_{\alpha}^T  \\
\end{array} \right).
\end{equation}

The matrices $P$ and $Q$ are symmetric and positive definite, like the block matrix in (\ref{PRRQ}) or (\ref{ointdU}) as a whole.
Since, on the other hand, $R$ is not symmetric we may carefully write it out as
$$
R=-\int_M \eta_\beta\wedge *\eta_\alpha^T
=\left(\begin{array}{ccccc}
\oint_{\beta_1}*\eta_{\alpha_1}& \oint_{\beta_1}*\eta_{\alpha_2}&\dots & \oint_{\beta_1}*\eta_{\alpha_{\texttt{g}}}\\
\oint_{\beta_2}*\eta_{\alpha_1}& \oint_{\beta_2}*\eta_{\alpha_2}&\dots & \oint_{\beta_2}*\eta_{\alpha_\texttt{g}}\\
\vdots &\vdots&\dots &\vdots\\
\oint_{\beta_\texttt{g}} *\eta_{\alpha_1}& \oint_{\beta_\texttt{g}}*\eta_{\alpha_2}&\dots & \oint_{\beta_\texttt{g}}*\eta_{\alpha_\texttt{g}}
\end{array}\right).
$$

As mentioned, the column matrix consisting of $\{-\eta_\beta, \eta_\alpha\}$ defines a basis of the harmonic forms.
Another basis is provided by the corresponding Hodge starred row matrix $\{-*\eta_\beta, *\eta_\alpha\}$.

\begin{lemma}\label{lem:PQR}
The two bases $\{  -\eta_{\beta},  \eta_{\alpha}\}$ and $\{  -*\eta_{\beta},   *\eta_{\alpha}\}$ are related by 
\begin{equation}\label{RQPR} 
\left( \begin{array}{cc}
                                         -  *\eta_{\beta}  \\
                                            *\eta_{\alpha} \\
\end{array} \right)=
\left( \begin{array}{cc}
                                            -R & P  \\
                                              -Q & R^T \\
\end{array} \right)
\left( \begin{array}{cc}
                                           -   \eta_{\beta}  \\
                                              \eta_{\alpha} \\                                            
\end{array} \right).
\end{equation}
In addition we have
\begin{equation}\label{RPQR2}
\left( \begin{array}{cc}
                                            -R & P  \\
                                              -Q & R^T \\
\end{array} \right)^2
=-\left( \begin{array}{cc}
                                            I & 0  \\
                                              0 & I \\
\end{array} \right).
\end{equation}
Equivalently,  $RP$ and $QR$ are symmetric matrices and 
\begin{equation}\label{PQIR}
PQ=I+R^2.
\end{equation}
\end{lemma}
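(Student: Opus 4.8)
The plan is to derive all three assertions from a single structural fact: the Hodge star is an endomorphism of the $2\texttt{g}$-dimensional space of harmonic one-forms on $M$, and the chosen basis has been normalized so that its period matrix over the cycles $\alpha_1,\dots,\alpha_\texttt{g},\beta_1,\dots,\beta_\texttt{g}$ is the identity. First I would record that on a closed surface $*$ preserves harmonic one-forms: a one-form $\omega$ is harmonic exactly when $d\omega=0$ and $d*\omega=0$, a pair of conditions symmetric under $\omega\mapsto*\omega$, and $**=-\mathrm{id}$ on one-forms. Hence $-*\eta_\beta$ and $*\eta_\alpha$ lie in the span of $\{-\eta_\beta,\eta_\alpha\}$, so there is a real $2\texttt{g}\times2\texttt{g}$ matrix $M$ with $\begin{pmatrix}-*\eta_\beta\\ *\eta_\alpha\end{pmatrix}=M\begin{pmatrix}-\eta_\beta\\ \eta_\alpha\end{pmatrix}$; note that the left column is precisely the Hodge star of the right column, so with $u=\begin{pmatrix}-\eta_\beta\\ \eta_\alpha\end{pmatrix}$ the relation reads $*u=Mu$.

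Next I would pin down $M$ by integrating this identity over the cycles $\alpha_k,\beta_k$. By the normalization (\ref{etaalpha})--(\ref{etabeta}), equivalently (\ref{periods}), the period matrix of $\{-\eta_\beta,\eta_\alpha\}$ over these cycles is the identity, so the period matrix of $\{-*\eta_\beta,*\eta_\alpha\}$ over the same cycles equals $M$ up to an explicit transposition forced by the column/row conventions. But those conjugate periods are, by definition (\ref{PRRQ}), assembled from the blocks $P$, $Q$, $R$, $R^T$. Using that $P$ and $Q$ are symmetric --- noted just before the lemma, as a consequence of the energy-tensor form (\ref{ointdU}) --- a short block-by-block comparison gives $M=\begin{pmatrix}-R & P\\ -Q & R^T\end{pmatrix}$, which is (\ref{RQPR}).

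For (\ref{RPQR2}) I would apply $*$ once more to $*u=Mu$: since $*$ commutes with multiplication by the constant matrix $M$, we get $*(*u)=M(*u)=M(Mu)=M^2u$, while $*(*u)=-u$ because $**=-\mathrm{id}$ on one-forms of a surface. As the entries of $u$ form a basis they are linearly independent, forcing $M^2=-I$, i.e.\ (\ref{RPQR2}). Finally I would multiply out the block square $\begin{pmatrix}-R&P\\-Q&R^T\end{pmatrix}^2=\begin{pmatrix}R^2-PQ & -RP+PR^T\\ QR-R^TQ & -QP+(R^T)^2\end{pmatrix}$ and equate it with $-I$: the $(1,1)$ block yields $PQ=I+R^2$, which is (\ref{PQIR}); the off-diagonal blocks yield $PR^T=RP$ and $R^TQ=QR$, and combined with $P^T=P$, $Q^T=Q$ these say exactly that $RP$ and $QR$ are symmetric (the $(2,2)$ block relation $QP=I+(R^T)^2$ is just the transpose of (\ref{PQIR})). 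The converse direction of the stated equivalence follows by running the same block computation backwards.

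The computations here are entirely routine; the only point demanding care is the bookkeeping of transposes and signs --- matching the column/row conventions built into (\ref{periods}) and (\ref{PRRQ}), and using $**=-1$ rather than $+1$ on one-forms. That convention-matching is the step I would verify most carefully, but I do not expect any genuine difficulty beyond it.
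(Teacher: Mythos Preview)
Your proposal is correct and follows essentially the same approach as the paper: identify the matrix by checking that both sides of (\ref{RQPR}) have the same periods over the homology basis (using (\ref{periods}) and (\ref{PRRQ})), then obtain (\ref{RPQR2}) from $**=-1$ on one-forms. You are simply more explicit than the paper, which leaves the period check and the block multiplication for the ``equivalently'' part to the reader; your added detail in writing out $M^2$ blockwise and reading off $PQ=I+R^2$ and the symmetry of $RP$, $QR$ is helpful and accurate.
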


\begin{proof}
One checks, using (\ref{etaalpha}), (\ref{etabeta}), (\ref{ointdU}), 
that the two members in (\ref{RQPR}) have the same periods with respect to the homology basis  $\{\alpha_j, \beta_j\}$.
The identity (\ref{RPQR2}) is a consequence of the fact that the Hodge star acting twice on a one-form gives the same
one-form back with a minus sign. See also \cite{Farkas-Kra-1992} in this respect.
\end{proof}


\subsection{Holomorphic one-forms}\label{sec:capacity matrices}

The space of holomorphic one-forms on $M$, the {\it Abelian differentials of the first kind},
has complex dimension $\texttt{g}$, with one basis consisting of the differentials in (\ref{omegaeta})
for the cycles $\gamma=\beta_1,\dots, \beta_{\texttt{g}}$, another one with those for
$\gamma=\alpha_1,\dots,\alpha_\texttt{g}$. 
In terms of the notation in Definition~\ref{def:omegagamma} we set up the following two bases
of holomorphic one-forms, written in the form of column vectors, or  ($\texttt{g}\times 1$)-matrices.
\begin{definition} 
Two different bases of the space of holomorphic differentials on $M$ are
\begin{align}\label{omegabeta} 
-\omega_\beta&=-(\eta_\beta+\I*\eta_\beta),\\
\label{omegaalpha}
\omega_\alpha&=\eta_\alpha+\I *\eta_\alpha.
\end{align}
\end{definition}

The relation between the two bases can be figured out from Lemma~\ref{lem:PQR}, and comes out as follows. 
\begin{lemma}\label{lem:UQRU}
The bases $\omega_\beta$ and $\omega_\alpha$ are related according to
\begin{align*}
-\omega_\beta&=Q^{-1}(R^T+\I I )\,\omega_\alpha,\\
\qquad\omega_\alpha&=P^{-1}(R-\I I)(-\omega_\beta).
\end{align*}
\end{lemma}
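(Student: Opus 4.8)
The plan is to reduce everything to matrix identities that are already contained in Lemma~\ref{lem:PQR}. First I would unpack the relation (\ref{RQPR}): reading off its two rows gives, as column vectors of one-forms, $*\eta_\beta=-R\,\eta_\beta-P\,\eta_\alpha$ and $*\eta_\alpha=Q\,\eta_\beta+R^{T}\eta_\alpha$. Substituting these into the definitions (\ref{omegabeta}) and (\ref{omegaalpha}) of the two holomorphic bases yields
\[
\omega_\alpha=\I Q\,\eta_\beta+(I+\I R^{T})\,\eta_\alpha,\qquad
-\omega_\beta=(-I+\I R)\,\eta_\beta+\I P\,\eta_\alpha .
\]
Since $\{\eta_\beta,\eta_\alpha\}$ is a basis of the real harmonic one-forms, each asserted formula becomes equivalent to a matrix identity in $P,Q,R$, and it suffices to substitute one of the displayed expressions into the claimed relation and compare the coefficients of $\eta_\beta$ and of $\eta_\alpha$.

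For the first relation I would insert the expression for $\omega_\alpha$ into $Q^{-1}(R^{T}+\I I)\,\omega_\alpha$. Expanding $(R^{T}+\I I)(I+\I R^{T})=\I\big((R^{T})^{2}+I\big)$ and using the transpose of (\ref{PQIR}), namely $QP=I+(R^{T})^{2}$ (valid since $P=P^{T}$, $Q=Q^{T}$), the coefficient of $\eta_\alpha$ collapses to $\I Q^{-1}QP=\I P$. For the coefficient of $\eta_\beta$ one gets $Q^{-1}(R^{T}+\I I)\,\I Q=\I Q^{-1}R^{T}Q-I$; since $QR$ is symmetric (Lemma~\ref{lem:PQR}) we have $R^{T}Q=(QR)^{T}=QR$, so $Q^{-1}R^{T}Q=R$ and the coefficient is $-I+\I R$. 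Both match the displayed expression for $-\omega_\beta$, giving $-\omega_\beta=Q^{-1}(R^{T}+\I I)\,\omega_\alpha$. The second relation I would obtain the same way: substituting the expression for $-\omega_\beta$ into $P^{-1}(R-\I I)(-\omega_\beta)$, the coefficient of $\eta_\beta$ becomes $\I P^{-1}(R^{2}+I)=\I P^{-1}PQ=\I Q$ by (\ref{PQIR}), and the coefficient of $\eta_\alpha$ becomes $\I P^{-1}RP+I=I+\I R^{T}$ because $RP$ is symmetric (Lemma~\ref{lem:PQR}), so $RP=PR^{T}$ and $P^{-1}RP=R^{T}$. Alternatively, once the first relation is established, the fact that $\{-\omega_\beta\}$ and $\{\omega_\alpha\}$ are both bases forces the second to be its inverse, so one need only verify that $Q^{-1}(R^{T}+\I I)$ and $P^{-1}(R-\I I)$ are mutually inverse — essentially the same computation.

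I do not expect a genuine obstacle here; the work is routine once Lemma~\ref{lem:PQR} is available. The only point requiring care is the bookkeeping of transposes and signs, and in particular remembering to invoke the precise inputs that make the $P^{-1}$ and $Q^{-1}$ factors cancel cleanly: the relation $PQ=I+R^{2}$ together with its transpose $QP=I+(R^{T})^{2}$, and the symmetry of $RP$ and $QR$.
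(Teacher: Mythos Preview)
Your proof is correct and is essentially the same as the paper's: the paper rewrites the two claims as $Q\omega_\beta+(R^T+\I I)\omega_\alpha=0$ and $P\omega_\alpha+(R-\I I)\omega_\beta=0$, then says to ``decompose into real and imaginary parts, and finally use Lemma~\ref{lem:PQR}'', omitting the details. Your expansion of $\omega_\alpha$ and $-\omega_\beta$ in the real basis $\{\eta_\beta,\eta_\alpha\}$ via (\ref{RQPR}), followed by coefficient comparison using $PQ=I+R^2$, its transpose, and the symmetry of $RP$ and $QR$, is precisely that omitted computation carried out explicitly.
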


\begin{proof}
A clue for the appearance the linear operators in the lemma comes from
factorization of the identity (\ref{RPQR2}) when written in the form with all terms moved to one side.
Using that the right member of (\ref{PQIR}) factorizes as $I+R^2=(I+\I R)(I-\I R)$ the identity (\ref{RPQR2})
then can be  expressed as
$$
\left( \begin{array}{cc}
                                            -\I I-R & P  \\
                                              -Q & -\I I+R^T \\
\end{array} \right)
\left( \begin{array}{cc}
                                          \I I  -R & P  \\
                                              -Q &\I I+ R^T \\
\end{array} \right)
=
\left( \begin{array}{cc}
                                            0 &0 \\
                                             0  &0 
\end{array} \right).
$$
Obviously the matrices in the left member have to be singular, 
in fact each of them has rank $\texttt{g}$ (half of the maximal rank), and the holomorphic basis 
$\{-\omega_\beta, \omega_\alpha\}$ then represents the null space
of the matrix in the second factor:
$$
\left( \begin{array}{cc}
                                          \I I  -R & P  \\
                                              -Q &\I I+ R^T \\
\end{array} \right)
\left( \begin{array}{cc}
                                     -\omega_\beta \\
                                      \omega_\alpha \\
\end{array} \right)
=
\left( \begin{array}{c}
                                            0  \\
                                             0   \\
\end{array} \right).
$$
This explains the first equation in the lemma. The second equation is explained similarly.

So much for the statements of the lemma.  The actual proof of it is obtained by writing the statements as
$$
Q\omega_\beta+(R^T+\I I )\,\omega_\alpha=0,
\quad P\omega_\alpha+(R-\I I)\omega_\beta=0,
$$
then decompose into real and imaginary parts, and finally use Lemma~\ref{lem:PQR}.
We omit the simple details.
\end{proof}


\subsection{Properties of Bergman and Schiffer kernels}\label{sec:Bergman and Schiffer}

If $f(z)dz$ is a holomorphic one-form on $M$ then, for any $a\in M$,
$$
0=\int_M d\big(f(z)G(z,a)dz\big)=-\int_M f(z)dz\wedge \frac{\partial G(z,a)}{\partial \bar{z}}d\bar{z}.
$$
The singularity of $G(z,a)$ at $z=a$ causes no problem here because it is quite mild. The
above expression is integrable, and if one wants to remove a small disk around the singularity one can 
do so and let the radius tend to zero without getting additional contributions.

Taking the derivatives with respect to $a$ and $\bar{a}$ above gives
\begin{equation}\label{Kprincipal}
\int_M f(z)dz \wedge\overline{\frac{\partial^2 G(z,a)}{\partial z\partial \bar{a}}dzd\bar{a}}=0,
\end{equation}
\begin{equation}\label{Lprincipal}
\int_M f(z)dz \wedge\overline{\frac{\partial^2 G(z,a)}{\partial z\partial {a}}dzd{a}}=0.
\end{equation}
These equalities should be subject to careful interpretation because now the singularities are less innocent.
For (\ref{Kprincipal}), the second factor is (up to a factor and a conjugation) the 
Bergman kernel together with a Dirac
distribution, see (\ref{K}), (\ref{HGza}). Therefore, evaluation of the identity (\ref{Kprincipal})
leads to the {\it reproducing property} of the Bergman kernel:
\begin{equation}\label{Kreproducing}
\frac{\I}{2}\int_M f(z)dz \wedge \overline{K(z,a)dzd\bar{a}}=f(a)da,
\end{equation}
holding for any holomorphic one-form $f(z)dz$ in $M$.
The factor $\I/2$ appears because the definition of the Bergman kernel is adapted to its use for planar domains,
where the area measure relates to complex differentials as in Section~\ref{sec:notations}.
 
For (\ref{Lprincipal}), the second factor is (up to a constant) the Schiffer kernel, as introduced in (\ref{GHza}), (\ref{L}),
so the identity can be written
\begin{equation}\label{Lprincipal1}
\int_M f(z)dz \wedge \overline{L(z,a)dz}=0. 
\end{equation}
The singularity is significant in the sense that the second factor is not
absolutely integrable. But the integral can still be evaluated as a principal value
integral with no extra contribution from the point $z=a$ itself. 
See discussions in \cite{Schiffer-Spencer-1954}.

In a certain sense the properties (\ref{Kreproducing}) and (\ref{Lprincipal1}) replace period requirements
on $K(z,a)dzd\bar{a}$ and $L(z,a)dzda$. Still one can see from (\ref{K}), (\ref{L}) that
the Bergman and Schiffer kernels can alternatively be characterized by having purely imaginary periods over all cycles,
this because $G(z,a)$ and $H(z,a)$ are real-valued functions. Compare the discussion after (\ref{upsilon}).

Next we expand the Bergman kernel along the natural bases of holomorphic differentials.
\begin{proposition}\label{prop:GUU} 
In terms of the matrices $P$, $Q$ we have 
\begin{align}\label{KPQ} 
K(z,a)dzd\bar{a}
&=\sum_{k,j=1}^\texttt{g} (P^{-1})_{kj}\omega_{\beta_k}(z)\overline{\omega_{\beta_j}(a)}\\
&=\sum_{k,j=1}^\texttt{g} (Q^{-1})_{kj}\omega_{\alpha_k}(z)\overline{\omega_{\alpha_j}(a)}.
\end{align}
\end{proposition}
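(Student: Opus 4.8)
The plan is to verify the reproducing property of the candidate expansion against the known reproducing property \eqref{Kreproducing} of $K(z,a)dzd\bar a$, together with its Hermitean symmetry \eqref{symmetryK} and its relation to the periods of the holomorphic forms. Concretely, let $\{\omega_{\beta_1},\dots,\omega_{\beta_\texttt{g}}\}$ be the basis from \eqref{omegabeta}, and recall that by construction (Definition~\ref{def:omegagamma}, \eqref{omegaeta}) and the normalizations \eqref{etaalpha}--\eqref{etabeta} these forms satisfy $\oint_{\alpha_k}\omega_{\beta_j}=-\I\,\delta_{kj}$, while their $\beta$-periods are $\oint_{\beta_k}\omega_{\beta_j}=P_{kj}+\I R_{kj}$ (reading off real and imaginary parts from \eqref{PRRQ}, using $\omega_\beta=\eta_\beta+\I*\eta_\beta$). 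The key algebraic input is that the Hermitean matrix $(P^{-1})_{kj}$ is exactly the inverse of the Gram-type matrix $\oint_{\beta_k}\omega_{\beta_j}$ modulo the imaginary part $R$, but in fact one only needs the $\alpha$-periods here, which are diagonal; this is the point that makes the $P^{-1}$ appear.

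The main step: define $\tilde K(z,a)=\sum_{k,j}(P^{-1})_{kj}\omega_{\beta_k}(z)\overline{\omega_{\beta_j}(a)}$. This is manifestly a holomorphic one-form in $z$, anti-holomorphic in $a$, with the correct Hermitean symmetry since $P^{-1}$ is real symmetric. It suffices to show $\tilde K$ has the reproducing property \eqref{Kreproducing}, since the reproducing kernel for the finite-dimensional space of holomorphic one-forms is unique. For this, first test \eqref{Kreproducing} with $f(z)dz=\omega_{\beta_m}(z)$: the right side is $\omega_{\beta_m}(a)$. For the left side with $K$ replaced by $\tilde K$, I would use the bilinear-relations identity \eqref{general identity} to compute $\frac{\I}{2}\int_M\omega_{\beta_m}\wedge\overline{\omega_{\beta_j}}$ in terms of the $\alpha$- and $\beta$-periods; because the $\alpha$-periods of the $\omega_{\beta}$'s are purely imaginary and diagonal ($-\I\delta_{kj}$), and the $\beta$-periods are $P+\I R$, the skew-symmetric structure of \eqref{general identity} kills the $R$ contribution and leaves $\frac{\I}{2}\int_M\omega_{\beta_m}\wedge\overline{\omega_{\beta_j}}=P_{mj}$. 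Then $\frac{\I}{2}\int_M\omega_{\beta_m}(z)\wedge\overline{\tilde K(z,a)}=\sum_{k,j}(P^{-1})_{jk}\,P_{mk}\,\overline{\overline{\omega_{\beta_j}(a)}}=\omega_{\beta_m}(a)$, as required. Since the $\omega_{\beta_m}$ span the space of holomorphic one-forms, $\tilde K$ reproduces all of them, hence $\tilde K=K$; this proves the first equality in \eqref{KPQ}.

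For the second equality, I would repeat the argument verbatim with the $\alpha$-basis $\{\omega_{\alpha_1},\dots,\omega_{\alpha_\texttt{g}}\}$ from \eqref{omegaalpha}, whose $\beta$-periods are $\oint_{\beta_k}\omega_{\alpha_j}=\I\,\delta_{kj}$ and whose $\alpha$-periods are $-Q_{kj}+\I R_{kj}^{T}$ — again one diagonal purely imaginary set of periods, so the same computation via \eqref{general identity} gives $\frac{\I}{2}\int_M\omega_{\alpha_m}\wedge\overline{\omega_{\alpha_j}}=Q_{mj}$ and hence the $Q^{-1}$ expansion. Alternatively, the second formula follows from the first by substituting the change-of-basis relations of Lemma~\ref{lem:UQRU} and simplifying with Lemma~\ref{lem:PQR}, but the direct computation is cleaner.

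The main obstacle I anticipate is purely bookkeeping: getting the signs and the $\I$'s in the period normalizations right (in particular the orientation conventions baked into \eqref{etaalpha}--\eqref{etabeta} and the sign in \eqref{general identity}), and confirming that the cross term involving $R$ genuinely cancels rather than merely being absorbed. There is no analytic difficulty — the singularity of $G$ is mild enough that \eqref{Kreproducing} is already established — so the whole proof is an exercise in the Riemann bilinear relations applied to a carefully normalized basis.
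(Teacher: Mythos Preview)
Your approach is sound and differs from the paper's: you verify the reproducing property \eqref{Kreproducing} against the basis $\omega_{\beta_m}$ by computing the Gram matrix via the bilinear relation \eqref{general identity}, then invoke uniqueness of the reproducing kernel. The paper instead observes that $K(z,a)dzd\bar a$ must be a finite linear combination of $\omega_{\beta_k}(z)\overline{\omega_{\beta_j}(a)}$ and identifies the coefficients by matching the $\beta$-periods in $z$ of both sides, using $\oint_\beta K(z,a)dzd\bar a=-\I\,\overline{\omega_\beta(a)}$ from \eqref{omegaeta} and $\oint_\beta\omega_\beta^T=-\I P$. The paper's route is shorter because it bypasses the full Gram computation and the $R$-cancellation; yours is more conceptually self-contained but requires more algebra.

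That said, your stated periods are wrong: from \eqref{etaalpha}--\eqref{etabeta} and \eqref{PRRQ} one gets
\[
\oint_{\alpha_k}\omega_{\beta_j}=-\delta_{kj}+\I R_{jk},\qquad \oint_{\beta_k}\omega_{\beta_j}=-\I P_{kj},
\]
not $-\I\delta_{kj}$ and $P_{kj}+\I R_{kj}$ as you wrote (you have essentially swapped real and imaginary parts). With the correct periods the bilinear computation still yields $\tfrac{\I}{2}\int_M\omega_{\beta_m}\wedge\overline{\omega_{\beta_j}}=P_{mj}$, so your conclusion survives; but note that the $R$-terms cancel precisely because $RP$ is symmetric (Lemma~\ref{lem:PQR}), which you should cite explicitly rather than leave as an anticipated obstacle. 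The analogous corrections apply to the $\omega_\alpha$ case: $\oint_{\alpha_k}\omega_{\alpha_j}=-\I Q_{kj}$ and $\oint_{\beta_k}\omega_{\alpha_j}=\delta_{kj}+\I R_{kj}$, with the $R$-cancellation now using symmetry of $QR$.
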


\begin{proof}
Since $K(z,a)dzda$ is holomorphic in $z$,
anti-holomorphic in $a$, there must be expansions of the form in the lemma, for some choice of coefficients.
It only remains to check that the coefficients in the proposition are the right ones. 

On keeping $a$ fixed this is confirmed by showing that either all integrals
$\oint_{\alpha_j}dz$, or all integrals $\oint_{\beta_j}dz$, 
come out the same when applied to the left and right members above. 
If one of these sets of integrals come out correctly, then the other set must too, because an everywhere holomorphic 
differential on $M$ is determined by its periods around any one of these sets of cycles.

The check is indeed straight-forward. Combining (\ref{periods}) and (\ref{PRRQ}) we have  
$$
\oint_\alpha \omega_{\alpha}^T =-\I Q, \quad \oint_\alpha \omega_\beta^T =\I R^T -I,
$$
$$
\oint_\beta \omega_\alpha^T =\I R+I, \quad \oint_\beta \omega_\beta^T =-\I P.
$$
This gives, in view of (\ref{omegaeta}) and with integration with respect to $z$, 
$$
\oint_{\alpha}\omega_\alpha(z)^T Q^{-1}\overline{\omega_\alpha(a)} 
=-\I Q^T Q^{-1}\overline{\omega_\alpha(a)}=-\I\overline{\omega_\alpha(a)}  
= \oint_{\alpha}K(z,a)dzd\bar{a}, 
$$
$$ 
\oint_{\beta}\omega_{\beta}^T(z)P^{-1}\overline{\omega_\beta(a)} 
=-\I P^T P^{-1}\overline{\omega_\beta(a)}=-\I\overline{\omega_\beta(a)}  
=\oint_{\beta}K(z,a)dzd\bar{a},
$$ 
as required.
\end{proof}
 
Since the matrices $P$ and $Q$ (and their inverses)
are positive definite one can easily orthogonalize the bases for holomorphic one-forms
and write (\ref{KPQ}) on the form
\begin{equation}\label{Kee}
K(z,a)dz  d\bar{a}=\sum_{i=1}^\texttt{g} e_i(z) \overline{e_i(a)},
\end{equation}
where the $e_j$ are holomorphic one-forms. Slightly more precisely, $P^{-1}$
has a positive square root $\sqrt{P^{-1}}$ so that
$(P^{-1})_{kj}=\sum_i (\sqrt{P^{-1}})_{ki}(\sqrt{P^{-1}})_{ij}$, and in terms of this (\ref{KPQ})
can be written
$$
K(z,a)dzd\bar{a}
=\sum_{i,k,j=1}^\texttt{g} (\sqrt{P^{-1}})_{ki}\omega_{\beta_k}(z)\overline{(\sqrt{P^{-1}})_{ji}\omega_{\beta_j}(a)},
$$
which is of the form (\ref{Kee}) with $e_i=\sum_{k=1}^\texttt{g}(\sqrt{P^{-1}})_{ki}\omega_{\beta_k}$.

\begin{remark}
We shall later apply (\ref{Kee}) in the case that the Riemann surface is the double of a planar domain,
and we shall then use it with an additional factor $2$ in the left member. This appears because the
orthogonality is to be adapted to the planar domain (half of the closed surface). See Section~\ref{sec:orthogonal}.
\end{remark}
 
Having discussed the Abelian differentials of the first and second kind, 
see (\ref{GHza}), we may also
mention {\it Abelian differentials of the third kind}, that is, those holomorphic one-forms
which have residue poles. A basic such differential, with residue $\pm 1$ poles at points $a$ and $b$, 
can most easily be obtained using the four variable potential in (\ref{VG}) as
\begin{equation}\label{upsilon}
\upsilon_{a-b}(z)=-{4\pi}\Big(\frac{\partial G(z,a)}{\partial z}dz-\frac{\partial G(z,b)}{\partial z}dz\Big)
=\frac{dz}{z-a}-\frac{dz}{z-b}+{\rm regular}.
\end{equation}
Here one can think of $a-b$ as a polar divisor.  

The differential $\upsilon_{a-b}$ is besides this divisor
characterized by having purely imaginary periods. Indeed,
$\upsilon_{a-b}= -2\pi (dV+\I *dV)$ with $V$ as in (\ref{VG}), so the real part of $\upsilon_{a-b}$ is exact.
By adjusting with linear combinations of the holomorphic
one-forms (\ref{omegabeta}), (\ref{omegaalpha}) one can construct bases of Abelian differentials normalized 
instead to have all $\alpha$-periods, alternatively all $\beta$-periods, being zero. Such normalizations have
the advantage that the dependence on the points $a$ and $b$ in the divisor becomes holomorphic. 
Indeed, having vanishing periods along certain cycles is a ``holomorphic'' kind of restriction, while requiring
just the real parts to vanish breaks holomorphicity. This affects only the regular term in (\ref{upsilon}).
See equation (\ref{exupsilonab}) in Example~\ref{ex:kernels2} below for an explicit example. Compare also
the decomposable differential defined by (\ref{LLL}) below (and more generally in \cite{Hawley-Schiffer-1966}).

On taking derivatives of (\ref{upsilon}) with respect to $a$ and $\bar{a}$ one meets again the Schiffer and Bergman kernels, for example
$$
\frac{\partial}{\partial a}\upsilon_{a-b}(z)da=-4\pi \frac{\partial^2 G(z,a)}{\partial z \partial a}dzda=\pi L(z,a)dzda,
$$
where now the dependence on all variables is holomorphic.
 

\subsection{The Robin function}\label{sec:Robin} 

Returning to the Green function in (\ref{ddGdelta}) and the decomposition (\ref{GHlog}),
we are, in the same vein as in (\ref{GlogH}) and Lemma~\ref{lem:hgammadelta}, 
led to 
\begin{definition}\label{def:coordinaterobin}
The {\it coordinate Robin function} is
$$
\gamma(a)=h_0(a)=H(a,a).
$$
\end{definition}
From another point of view this is a {\it capacity function} (or {\it capacity}) in the sense of \cite{Sario-Oikawa-1969}, 
and it is actually not a true function because it  transforms under conformal changes of coordinates in such a way that
$ds=e^{-\gamma(z)}|dz|$ is an invariantly defined metric. 

In the presence of an independent metric $ds=\lambda(z)|dz|$ and on adding $\log \lambda$ to $\gamma$,
one gets a true function,  the {\it Robin function},
\begin{equation}\label{Rh}
R(z)=\frac{1}{2\pi}\big(\gamma(z)+\log \lambda(z)\big).
\end{equation}
This has been used in  \cite{Koiller-Boatto-2009, Grotta-Ragazzo-Barros-Viglioni-2017, Grotta-Ragazzo-2024}, for example,
and it is the constant term when the Green function is expanded as
\begin{equation}\label{GlogdR}
G(z,a)=-\frac{1}{2\pi} \log d(z,a)+R(a)+ \mathcal{O}(d(z,a))\quad (z\to a)
\end{equation}
with $d(z,a)$ denoting the metric distance between $z$ and $a$.

In the Taylor expansion (\ref{HTaylor1}), which is written in terms of a local coordinate, we have
$$
\frac{\partial^2 H(z,a)}{\partial z\partial\bar{z}}\Big|_{z=a}=h_{11}(a),
$$
$$
\frac{\partial^2 H(z,a)}{\partial z\partial\bar{a}}\Big|_{z=a}=\frac{1}{2}\frac{\partial h_1(a)}{\partial \bar{a}}-h_{11}(a).
$$
Since $h_1(a)=\partial h_0(a)/\partial a$ (like in the planar case, see Lemma~\ref{lem:hgammadelta})
we conclude, using also (\ref{Hlambda}), that 
$$
\Delta \gamma(a)=4\frac{\partial^2h_0(a)}{\partial a\partial\bar{a}}=8h_{11}(a)
+8\frac{\partial^2 H(z,a)}{\partial z\partial\bar{a}}\Big|_{z=a}
=\frac{4\pi}{V}\lambda(a)^2-4\pi K(a,a),
$$
$K(z,a)$ referring to the Bergman kernel (\ref{K}). Introducing the notation
$$
K_{\rm diag}(a)=K(a,a)
$$
for the Bergman kernel restricted to the diagonal we can write
\begin{equation}\label{Deltagammadouble}
\Delta \gamma=4\pi\Big(\frac{\lambda^2}{V}- K_{\rm diag}\Big),
\end{equation}
and similarly
\begin{equation}\label{DeltaR}
\Delta R =\Big(\frac{2}{V}-\frac{\kappa}{2\pi}\Big) \lambda^2-2K_{\rm diag},
\end{equation}
where we used the definition (\ref{kappageneral})  of Gaussian curvature. Here one can insert
the expressions in Proposition~\ref{prop:GUU}, or (\ref{Kee}), to obtain explicit formulas
exactly like those obtained by Okikiolu, Steiner, and Grotta-Ragazzo, see
 \cite{Okikiolu-2009, Steiner-2005, Grotta-Ragazzo-2024}. 

We remark that also the Bergman kernel itself defines a metric, the {\it Bergman metric},
\begin{equation}\label{Bergman metric}
ds^2=K(z,z)|dz|^2=K_{\rm diag}(z)|dz|^2,
\end{equation}
in general not identical with the previously mentioned metrics.
See (\ref{inequalities}) below for comparisons and for some more metrics,
and \cite{Jost-2001, Krantz-2004, Grotta-Ragazzo-2024} for some further metrics. 

\begin{remark}\label{rem:robin}
The term ``Robin function'' is also used in  other meanings, related to mixed boundary value problems for planar
domains. See \cite {Duren-Schiffer-1991}.
\end{remark}


\subsection{Examples in genus zero and one}\label{sec:examples}

\subsubsection{The sphere}\label{sec:sphere}

First a brief treatment of the genus zero case. For the sphere of radius one, with the metric 
$$
ds^2=\frac{4|dz|^2}{(1+|z|^2)^2},
$$
obtained via stereographic projection, the monopole Green function is
$$
G(z,a)=-\frac{1}{4\pi}\Big(\log \frac{|z-a|^2}{(1+|z|^2)(1+|a|^2)}+1\Big).
$$
This gives the Bergman and Schiffer kernels
$$
K(z,a)=0, \quad L(z,a)dzda=\frac{dzda}{\pi (z-a)^2}.
$$

As for the coefficients in the expansion (\ref{GHlog}), (\ref{HTaylor1}) of the Green function we have
$$
h_0(a)=\log (1+|a|^2)-\frac{1}{2},\quad
h_1(a)=\frac{\bar{a}}{1+|a|^2},\quad
h_2(a)=\frac{\bar{a}^2}{2(1+|a|^2)^2},
$$
$$
e^{-2h_0(a)}=\frac{e}{(1+|a|^2)^2},\quad
\lambda(a)^2=8h_{11}(a)= \frac{4}{(1+|a|^2)^2}. 
$$


\begin{figure}
\begin{center}
\includegraphics[scale=0.7]{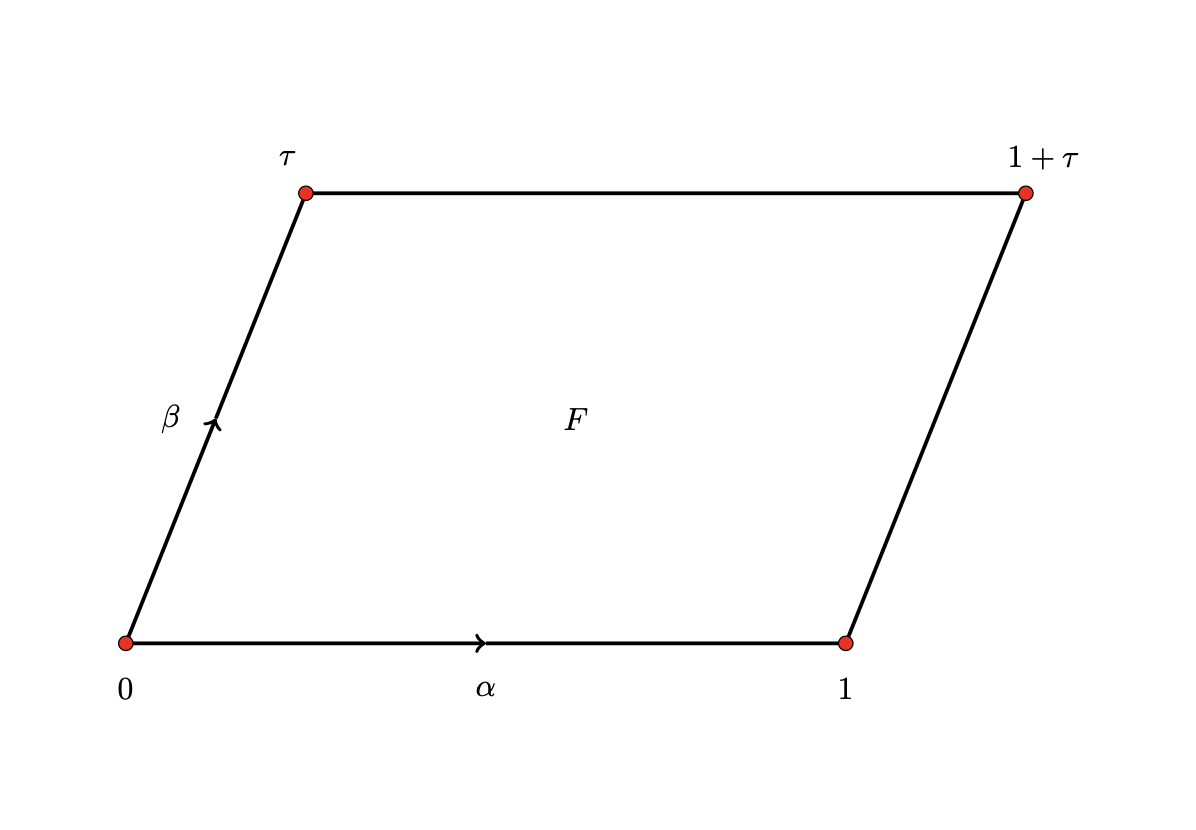}
\end{center}
\caption{Period parallelogram representing a torus. Notations as in text.}
\label{fig:torus1}
\end{figure}


\subsubsection{The torus}\label{sec:torus}

Next we turn to genus one.
Let $M$ be the flat torus represented as $M=\C/(\Z\times \tau \Z)$, where $\im \tau>0$, and with the Euclidean metric
$ds^2=dx^2+dy^2$. It suffices to work in the fundamental domain $F$ generated by the vectors $1$ and $\tau$. The area of $F$
is $V=\im \tau$, and as canonical homology basis
we choose $\alpha$ and $\beta$ (no indices are needed) to be the straight lines from the origin to $1$ and to $\tau$, 
respectively, see Figure~\ref{fig:torus1}.

The monopole Green function $G(z,a)$ is to satisfy
\begin{equation}\label{Gtorus}
-4\frac{\partial^2}{\partial z\partial \bar{z}} G(z,a) =\delta(z-a)-\frac{1}{\im\tau} \quad (z,a\in F), 
\end{equation}
with boundary conditions which allow for the natural periodic continuation of this relation.
In principle, $G(z,a)$ can be represented in terms of elliptic functions, but the expressions are somewhat complicated 
and do not add much of theoretical understanding. See however \cite{Courant-Hilbert-1968, Crowdy-Marshall-2007} for
formulas and series developments useful for numerical purposes.

The regular part $H(z,a)$ of the Green function is equally complicated, but some of its derivatives
are simple and can be obtained by other means. 
For example, the Schiffer kernel  (\ref{L}), (\ref{GHza}) is, because of the singularity structure, necessarily of the form
$$
L(z,a)dzda=-4\frac{\partial^2 G(z,a)}{\partial z \partial a}dzda=\frac{1}{\pi}\big( \wp(z-a)+C\big)dzda
$$
for some constant $C$,
 and where $\wp(z)$ is the Weierstrass $\wp$-function \cite{Ahlfors-1966}:
$$
\wp(z)=\frac{1}{z^2}+\sum\Big(\frac{1}{(z-m-n\tau)^2}-\frac{1}{(m+n\tau)^2}\Big),
$$
with summation over all $(m,n)\in\Z\times \Z \setminus \{(0,0)\}$.
 
The constant $C$ is to be determined so that 
\begin{equation}\label{C}
\int_M L(z,a)dzd\bar{z}=0,
\end{equation}
because this is what (\ref{Lprincipal1}) says on choosing $f(z)dz=dz$, up to a constant factor the only holomorphic
one-form on the torus. 

Let, in traditional notation, $\zeta(z)$ be a primitive function of minus $\wp(z)$, so that
$\zeta^\prime(z)=-\wp({z})$. Then, for any $a\in F$, $\zeta(z-a)$ is well-defined in $F$ and has a simple pole at $z=a$ with
residue one. However, $\zeta(z-a)$ is not doubly periodic, and so is not single-valued on $M$, but one can
still work with it within $F$ to obtain the following identities.

\begin{lemma}\label{lem:torus}
Defining the periods $\eta_1, \eta_2$ of $\wp(z)$ so that $\zeta(z+1)=\zeta(z)+\eta_1$, $\zeta(z+\tau)=\zeta(z)+\eta_2$ 
we have, for any $a\in F$, the identities
$$
\int_\alpha \wp(z-a)dz=-\eta_1, \quad \int_\beta \wp(z-a)dz=-\eta_2,
$$
$$
2\pi \I=\int_{\partial F}\zeta(z-a)dz=\eta_1 \tau-\eta_2,
$$
$$
-\int_F\wp(z-a)dzd\bar{z}=\int_{\partial F}\zeta(z-a)d\bar{z}=\bar{\tau}\eta_1-\eta_2.
$$
\end{lemma}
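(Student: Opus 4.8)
The plan is to reduce all four identities to two elementary facts about the Weierstrass functions on $M=\C/(\Z+\tau\Z)$: the relation $\zeta'(w)=-\wp(w)$, and the quasi-periodicity $\zeta(w+1)=\zeta(w)+\eta_1$, $\zeta(w+\tau)=\zeta(w)+\eta_2$. I would fix $a$ in the interior of $F$, so that $z-a$ avoids the lattice along the edges $\alpha$, $\beta$ and along $\partial F$; since the right members of the four identities are constants and both sides depend continuously on $a$, this entails no loss of generality. The first two identities are then just the fundamental theorem of calculus along the respective segments: $\int_\alpha\wp(z-a)\,dz=-\bigl[\zeta(z-a)\bigr]_{0}^{1}=-\bigl(\zeta(1-a)-\zeta(-a)\bigr)=-\eta_1$ by quasi-periodicity, and identically $\int_\beta\wp(z-a)\,dz=-\bigl(\zeta(\tau-a)-\zeta(-a)\bigr)=-\eta_2$.

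For the third identity I would compute $\oint_{\partial F}\zeta(z-a)\,dz$ twice. Parametrizing the four sides of the parallelogram traversed as $0\to1\to1+\tau\to\tau\to0$ and pairing opposite sides, the pair $\{0\to1,\ 1+\tau\to\tau\}$ telescopes, using $\zeta(w+\tau)=\zeta(w)+\eta_2$, to $-\eta_2$, while the pair $\{1\to1+\tau,\ \tau\to0\}$ gives $\tau\eta_1$ using $\zeta(w+1)=\zeta(w)+\eta_1$; hence the integral equals $\eta_1\tau-\eta_2$. Independently, $\zeta(z-a)$ has in $F$ exactly one pole, the simple pole $z=a$ with residue $1$, so the residue theorem gives $\oint_{\partial F}\zeta(z-a)\,dz=2\pi\I$. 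Equating the two evaluations yields the full chain $2\pi\I=\oint_{\partial F}\zeta(z-a)\,dz=\eta_1\tau-\eta_2$, which incorporates Legendre's relation.

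The fourth identity has two equalities. For $\oint_{\partial F}\zeta(z-a)\,d\bar z=\bar\tau\eta_1-\eta_2$ I would run the same side-pairing computation with $d\bar z$ in place of $dz$ along each edge: the edges in the $\tau$-direction now carry the factor $\bar\tau$ instead of $\tau$, while the $-\eta_2$ contribution of the horizontal edges is unchanged. For the first equality, $-\int_F\wp(z-a)\,dz\,d\bar z=\oint_{\partial F}\zeta(z-a)\,d\bar z$, I would apply Stokes' theorem on the punctured region $F\setminus D_\varepsilon(a)$ to the one-form $\zeta(z-a)\,d\bar z$, using $d\bigl(\zeta(z-a)\,d\bar z\bigr)=\zeta'(z-a)\,dz\wedge d\bar z=-\wp(z-a)\,dz\wedge d\bar z$. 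The boundary contribution over the circle $\partial D_\varepsilon(a)$ tends to $0$ as $\varepsilon\to0$, since $\int_{\partial D_\varepsilon(a)}\frac{d\bar z}{z-a}=0$ (a direct computation with $z-a=\varepsilon e^{\I\theta}$) and the remainder $\zeta(w)-\tfrac1w=O(w^3)$ contributes $O(\varepsilon^4)$. Letting $\varepsilon\to0$ then gives the stated equality, with $\int_F$ read as a principal value.

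The only delicate point — more bookkeeping than genuine difficulty — lies in the fourth identity: the density $\wp(z-a)\,dz\,d\bar z$ is not absolutely integrable near the diagonal (it behaves like $r^{-1}\,dr$ in polar coordinates centred at $a$), so the left member must be understood as the principal value $\lim_{\varepsilon\to0}\int_{F\setminus D_\varepsilon(a)}$, in accordance with the discussion following (\ref{Lprincipal1}); one must also keep straight the induced orientation $\partial(F\setminus D_\varepsilon(a))=\partial F-\partial D_\varepsilon(a)$. With these conventions in force the computation proceeds exactly as described, and the lemma will then be used, together with the Legendre relation, to pin down the constant $C$ in the Schiffer kernel via (\ref{C}).
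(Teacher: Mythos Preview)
Your proposal is correct and follows essentially the same route as the paper: the paper remarks that the first two identities are immediate from the definition of $\eta_1,\eta_2$, that the Legendre relation is proved by the same side-pairing technique, and then carries out in detail only the last identity via Stokes' theorem and pairing of opposite edges of $\partial F$, exactly as you do. Your treatment is in fact more explicit about the principal-value interpretation and the vanishing of the small-circle contribution $\oint_{\partial D_\varepsilon(a)}\zeta(z-a)\,d\bar z$, which the paper dispatches in one sentence.
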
 

The first relations are simply the definition of $\eta_1$ and $\eta_2$, and the second relation 
is an instance of the well-known {\it Legendre identity} \cite{Ahlfors-1966}. In the last relation the left member shall be
interpreted as a {\it principal value} integral (i.e., one removes a small disk $\D(a,\varepsilon)$ from $F$ and takes the
limit as $\varepsilon\to 0$).

\begin{proof}
We only prove the last identity (the proof of the Legendre identity is similar). 
The integrand in the first term has a singularity with leading part $1/(z-a)^2$, 
which can be handled by removing a small disk
around $a$ and letting the radius tend to zero. In the limit there will be no contribution,
so the singularity can just be ignored.

We therefore get, letting $t$ parametrize the various parts of $\partial F$,
$$
\int_F\wp({z-a})dzd\bar{z}=-\int_F \zeta^\prime(z-a) dzd\bar{z}=-\int_{\partial F} \zeta(z-a)d\bar{z} 
$$
$$
=-\int_0^1\zeta(t-a)dt-\int_0^1 \zeta(1+t\tau-a)\bar{\tau}dt
$$
$$
-\int_0^1\zeta(1+\tau-t-a)(-dt)-\int_0^1\zeta(\tau+(-\tau) t-a)(-\bar{\tau}dt)
$$
$$
=-\int_0^1\zeta(t-a)dt-\int_0^1 \zeta(1+t\tau-a)\bar{\tau}dt
$$
$$
+\int_0^1\zeta(t+\tau-a)dt+\int_0^1\zeta(t\tau -a)\bar{\tau}dt
=\eta_2-\bar{\tau}\eta_1.
$$
\end{proof}

We proceed with identifying the two basic harmonic differentials $\eta_\alpha$ and $\eta_\beta$
(not to be confused with the periods $\eta_1$, $\eta_2$ above).
They  are determined via the periodicity properties (\ref{etaalpha}), (\ref{etabeta}). This gives
$$
\eta_\alpha=\frac{1}{\im \tau}dy, \quad \eta_\beta=-dx+\frac{\re \tau}{\im \tau}dy,
$$
$$
*\eta_\alpha=-\frac{1}{\im \tau}dx, \quad *\eta_\beta=-\frac{\re \tau}{\im \tau}dx-dy,
$$
with the period matrix (see (\ref{PRRQ}) for the general expression) given by
\begin{equation*}
\left( \begin{array}{cc}
                                            P & R  \\
                                              R^T & Q  \\
\end{array} \right)=
\left( \begin{array}{cc} 
\frac{|\tau|^2}{\im\tau} & -\frac{\re \tau}{\im\tau}  
\vspace{2mm}\\
                                             -\frac{\re \tau}{\im\tau}   & \frac{1}{\im\tau} \\
\end{array} \right).
\end{equation*}
The two versions of a basic holomorphic one-form (see (\ref{omegabeta}), (\ref{omegaalpha})) are
$$
\omega_\alpha=-\frac{\I \,dz}{\im \tau},\quad 
\omega_\beta=-\frac{\I\bar{\tau}dz}{\im\tau},
$$
the relations in Lemma~\ref{lem:UQRU} thus being confirmed with
$$
Q^{-1}(R^T+\I I)= -\re\tau+\I\im\tau=-\bar{\tau},
$$
$$
P^{-1}(R-\I I)=-\frac{{\tau}}{|\tau^2|}=-\frac{1}{\bar{\tau}}.
$$

Turning to the Bergman kernel, this can be read off  from either of the two formulas in Proposition~\ref{prop:GUU},
for example the first one, then coming out as 
\begin{equation}\label{Ktorus}
K(z,a)dzd\bar{a}=\frac{\im \tau}{|\tau|^2}\cdot (-\frac{\I \bar{\tau}dz}{\im \tau})\cdot (+\frac{\I \tau d\bar{a}}{\im\tau})
=\frac{dzd\bar{a}}{\im\tau}.
\end{equation}

Here we can confirm that (see (\ref{omegaeta}))
$$
\omega_\alpha(a)=-\frac{\I da}{\im \tau}=-\I\oint_\alpha K(a,z)d\bar{z}da=\I \oint_\alpha L(z,a)dzda,
$$
$$
\omega_\beta(a)=-\frac{\I \bar{\tau}da}{\im \tau}=-\I\oint_\beta K(a,z)d\bar{z}da=\I \oint_\beta L(z,a)dzda.
$$

Combining the two identities in Lemma~\ref{lem:torus} and comparing with (\ref{C}) gives
$$
L(z,a)=\frac{1}{\pi}\big(\wp(z-a)+ \eta_1\big)-\frac{1}{ \im \tau}.
$$
We summarize as

\begin{proposition}\label{prop:Lwp}
The Bergman and Schiffer kernels, and the holomorphic one-forms, for the torus in a period parallelogram 
in $\C$ spanned by $\{1,\tau\}$, are
\begin{align*}
K(z,a)dzd\bar{a}&=\frac{dzd\bar{a}}{\im \tau},\\
L(z,a)dzda&=\frac{1}{\pi}\big(\wp(z-a)+ \eta_1\big)dzda-\frac{dzda}{ \im \tau},\\
\omega_\alpha(a)&=-\frac{\I da}{\im \tau},\\
\omega_\beta(a)&=-\frac{\I \bar{\tau}da}{\im \tau}.
\end{align*}
Here the cycles $\alpha, \beta$ correspond to the oriented segments $[0,1]$ and $[0,\tau]$,
respectively.
\end{proposition}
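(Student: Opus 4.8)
The plan is to assemble the computations already carried out in the paragraphs immediately preceding the statement, filling the single remaining gap — the value of the constant $C$ in the Schiffer kernel — by invoking Lemma~\ref{lem:torus}. The argument splits into three essentially independent parts: the two holomorphic one-forms, the Bergman kernel, and the Schiffer kernel.

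First I would pin down $\omega_\alpha$ and $\omega_\beta$. Since the metric is flat, the harmonic one-forms on $M$ are exactly the constant-coefficient forms, and among these the basic ones $\eta_\alpha,\eta_\beta$ are determined by the normalization (\ref{etaalpha}), (\ref{etabeta}) against $\alpha=[0,1]$ and $\beta=[0,\tau]$; solving the resulting $2\times 2$ linear systems gives $\eta_\alpha=\frac{1}{\im\tau}\,dy$ and $\eta_\beta=-dx+\frac{\re\tau}{\im\tau}\,dy$. Applying $*$ (which on the torus sends $dx\mapsto dy$, $dy\mapsto -dx$) and forming $\omega_\alpha=\eta_\alpha+\I*\eta_\alpha$, $-\omega_\beta=-(\eta_\beta+\I*\eta_\beta)$ as in Definition~\ref{def:omegagamma} with the sign conventions of (\ref{omegabeta}), (\ref{omegaalpha}), the $dx$, $dy$ recombine into multiples of $dz$ — as they must, $dz$ spanning the one-dimensional space of holomorphic one-forms — yielding $\omega_\alpha=-\frac{\I\,dz}{\im\tau}$ and $\omega_\beta=-\frac{\I\bar\tau\,dz}{\im\tau}$. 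Along the way I would record the conjugate period matrix entries $P=\frac{|\tau|^2}{\im\tau}$, $Q=\frac{1}{\im\tau}$, $R=-\frac{\re\tau}{\im\tau}$ read off from (\ref{PRRQ}). The Bergman kernel is then immediate from Proposition~\ref{prop:GUU}: with $\texttt{g}=1$ the first formula there reads $K(z,a)\,dzd\bar a=(P^{-1})\,\omega_\beta(z)\overline{\omega_\beta(a)}=\frac{\im\tau}{|\tau|^2}\cdot\frac{|\tau|^2}{(\im\tau)^2}\,dzd\bar a=\frac{dzd\bar a}{\im\tau}$, and the $Q^{-1}$-version gives the same answer as a consistency check.

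The Schiffer kernel requires the one genuinely new step. By the singularity structure (\ref{singularityL}), $L(z,a)$ viewed as a function of $z$ with $a$ fixed is a globally defined, hence doubly periodic, object, holomorphic away from $z=a$ where it has principal part $\frac{1}{\pi(z-a)^2}$ and vanishing residue; therefore $\pi L(z,a)-\wp(z-a)$ is an elliptic function without poles, i.e.\ a constant $C$, so $L(z,a)=\frac{1}{\pi}\big(\wp(z-a)+C\big)$. To fix $C$ I would impose (\ref{Lprincipal1}) with the unique (up to scale) holomorphic one-form $f(z)dz=dz$, which is precisely condition (\ref{C}): $\int_M L(z,a)\,dzd\bar z=0$. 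Using $\int_F dz\wedge d\bar z=-2\I\,\im\tau$ together with the last identity of Lemma~\ref{lem:torus}, namely $\int_F\wp(z-a)\,dzd\bar z=\eta_2-\bar\tau\eta_1$, this gives $C=\frac{\eta_2-\bar\tau\eta_1}{2\I\,\im\tau}$. Substituting $\eta_2=\eta_1\tau-2\pi\I$ from the Legendre identity (the second identity of Lemma~\ref{lem:torus}) turns the numerator into $\eta_1(\tau-\bar\tau)-2\pi\I=2\I(\eta_1\im\tau-\pi)$, whence $C=\eta_1-\frac{\pi}{\im\tau}$, and therefore $L(z,a)\,dzda=\frac{1}{\pi}\big(\wp(z-a)+\eta_1\big)\,dzda-\frac{dzda}{\im\tau}$, as claimed.

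The steps are all short, and the substantive input — Lemma~\ref{lem:torus} (the contour integrals around $\partial F$ with the quasi-periodic Weierstrass $\zeta$) and the Legendre relation — has already been established. If anything here is an obstacle it is purely notational: keeping the factors of $\I$, the complex conjugations, and the orientation of $dz\wedge d\bar z$ mutually consistent, so that $C$ comes out in exactly the form $\eta_1-\pi/\im\tau$ rather than an equivalent but differently presented constant.
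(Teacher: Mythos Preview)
Your proposal is correct and follows essentially the same route as the paper: identify the constant-coefficient harmonic forms from the period normalizations, read off $\omega_\alpha,\omega_\beta$ and the period matrix, invoke Proposition~\ref{prop:GUU} for $K$, and fix the constant $C$ in $L(z,a)=\frac{1}{\pi}(\wp(z-a)+C)$ via the vanishing condition (\ref{C}) combined with Lemma~\ref{lem:torus} and the Legendre relation. Your explicit algebra for $C=\eta_1-\pi/\im\tau$ is exactly what the paper leaves implicit in the phrase ``combining the two identities in Lemma~\ref{lem:torus}.''
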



\section{Planar domains by Schottky double}\label{sec:planar domains}

\subsection{The Schottky double}\label{sec:Schottky}

Now we return to the case of a planar domain $\Omega\subset \C$, which we for simplicity assume is bounded, 
with boundary consisting of $\texttt{g}+1$ analytic curves. 
The latter assumption is actually very innocent: every domain of finite connectivity for which none of the boundary components
consists of just a single point is conformally equivalent  to a domain with analytic boundary, in fact even
to a domain bounded by perfect circles, see \cite{Koebe-1918, Nehari-1952}. This is often used for computational
purposes and for representation of multiply connected domains in terms of Schottky groups of reflection maps, as in
\cite{Crowdy-Marshall-2006}.

The Schottky double of a planar domain, first described in \cite{Schottky-1877},
is the compact Riemann surface $M=\Hat{\Omega}$ obtained by completing $\Omega$ with a ``backside'' $\tilde{\Omega}$
having the opposite conformal structure and glueing the two along their common boundary $\partial\Omega$. Thus 
$\Hat{\Omega}=\Omega \cup\partial\Omega\cup \tilde{\Omega}$ in a set theoretic sense, and the conformal structure becomes
smooth over $\partial\Omega$, as can be seen from well-known reflection principles (see further below). If $z$ is a point in $\Omega$,
then we denote by $\tilde{z}$ the corresponding (reflected) point on $\tilde{\Omega}$. The map $z\mapsto \tilde{z}$,
together with $\tilde{z}\mapsto z$, defines an anti-conformal involution $J:M\to M$ making $M=\Hat{\Omega}$ 
become a {\it symmetric Riemann surface}.
The boundary $\partial\Omega$ can be recovered as the set of fixed points of $J$, and the domains $\Omega$ and $\tilde{\Omega}$ are 
the two components of $M$ when this fixed point set is removed.

There also exist symmetric Riemann surfaces $(M,J)$ for which the fixed point set of $J$ does not divide $M$ into two
components. One example is the Riemann sphere with involution $z\mapsto -1/\bar{z}$. This involution has no fixed points at all.
Such involutions come up in the context of non-orientable Riemann surfaces, or Klein surfaces. See for example
\cite{Schiffer-Spencer-1954, Alling-Greenleaf-1971}, and in the context of vortex motion \cite{Balabanova-Montaldi-2022}.

In order to be relevant for physical problems, such as vortex motion, the Schottky double
need to be provided with a Riemannian metric, preferably one which is invariant with respect to $J$.
For the unit disk there are three metrics of constant curvature which come up naturally, namely
\begin{itemize}
\item the spherical metric $ds=\frac{2|dz|}{1+|z|^2}$,  
\item the Euclidean metric $ds=|dz|$,
\item  the hyperbolic metric $ds=\frac{2|dz|}{1-|z|^2}$.
\end{itemize}
They are here normalized so that the curvatures are $+1$, $0$, $-1$.  

Identifying the Schottky double of the disk with the (Riemann) sphere, the first metric above becomes the ordinary round metric 
on the sphere,
which is good in all respects. The Euclidean metric is of course good in itself, but when extended symmetrically to the double it
becomes singular on the boundary. In fact, all the curvature becomes concentrated there, the total Gaussian curvature of $4\pi$ 
(for any topological sphere) 
being uniformly distributed on the boundary circle of length $2\pi$. It is natural think in this case of the double as a ``pancake''.

The final case, with the hyperbolic metric, is 
however no good for the Schottky double. The metric is complete in itself, and this means that the boundary is
infinitely far away and cannot be incorporated.

For the future discussion we shall work with the Schottky double 
$M=\Hat{\Omega}$ with each of $\Omega$  and $\tilde{\Omega}$
being provided with the Euclidean metric. This gives the pancake metric on $M=\Hat{\Omega}$, namely
\begin{equation}\label{metricM}
ds=
\begin{cases}
|dz|, \quad z\in \Omega,\\
|d\tilde{z}|, \quad \tilde{z}\in \tilde{\Omega}.
\end{cases}
\end{equation}
To see how this behaves across
$\partial \Omega$ we need a holomorphic coordinate defined in a full neighborhood of $\partial\Omega$ in $M$. 
A natural candidate can be
defined in terms of the {\it Schwarz function} $S(z)$ for $\partial\Omega$, 
a function which is defined by its properties of being holomorphic 
in a two-sided neighborhood of $\partial\Omega$ in the complex plane and by satisfying
\begin{equation}\label{Schwarz}
S(z)=\bar{z} \quad \text{on }\partial\Omega.
\end{equation} 
See \cite{Davis-1974, Shapiro-1992} for details about $S(z)$. We remark that 
$z\mapsto \overline{S(z)}$ is the locally defined anti-conformal reflection map in $\partial\Omega\subset\C$,
thus
\begin{equation}\label{SSz}
\overline{S(\overline{S(z)})}=z.
\end{equation}
The derivative satisfies
\begin{equation}\label{SprimeT}
S'(z)=T(z)^{-2},
\end{equation}
where $T(z)$ is the positively oriented and holomorphically extended unit tangent vector on $\partial\Omega$. 

The complex coordinate $z$ in $\Omega$ extends, as a holomorphic function, to a full neighborhood of $\Omega\cup\partial\Omega$,
both when this neighborhood is considered as a subset of $\C$ and when it is considered as a subset of $M$.
In the first case this is obvious, while in the second case it depends on $\partial\Omega$ being analytic. 
In terms of the Schwarz function this second extension is given by
\begin{equation}\label{extension}
\phi(z)=
\begin{cases}
z\quad &\text{for }z\in\Omega\cup \partial\Omega,\\
\overline{S({\tilde{z}})}\quad&\text{for }\tilde{z}\in \tilde{\Omega},\text{ close to }\partial\Omega,
\end{cases}
\end{equation} 
where, in the latter expression, the variable $\tilde{z}$ is to be interpreted as a complex number.

The function $\phi(z)$ defined by (\ref{extension}) is a holomorphic coordinate on a neighborhood of $\Omega\cup\partial\Omega$
in the Schottky double $M$. It is to be combined with a corresponding coordinate on $\tilde{\Omega}\cup\partial\Omega$,
and that can similarly be taken to be
\begin{equation}\label{extensionbackside}
\tilde{\phi}(\tilde{z})=
\begin{cases}
\overline{\tilde{z}}\quad &\text{for }\tilde{z}\in\tilde{\Omega}\cup \partial\Omega,\\
{S({{z}})}\quad&\text{for }{z}\in {\Omega},\text{ close to }\partial\Omega.
\end{cases}
\end{equation} 
The two functions $\phi$ and $\tilde{\phi}$ then make up a complex analytic  atlas on the Schottky double, and using (\ref{SSz})
one finds that the transition function is the Schwarz function itself:
\begin{equation}\label{Sphi}
S=\tilde{\phi}\circ \phi^{-1}.
\end{equation}

When the metric on $M$ is expressed in the coordinate (\ref{extension}) it becomes
\begin{equation}\label{coordinate metric}
ds=
\begin{cases}
|dz|\quad &\text{for }z\in\Omega\cup \partial\Omega,\\
|S^\prime({z})||d{z}|\quad&\text{for }{z}\in \C\setminus\overline{\Omega},\text{ close to }\partial\Omega.
\end{cases}
\end{equation}
In the second case we have $\tilde{z}=\overline{S(z)}$, and so $|d\tilde{z}|=|S^\prime({z})||d{z}|$. 
Therefore (\ref{coordinate metric}) is consistent with (\ref{metricM}).
And we see from the coordinate representation (\ref{coordinate metric}) that the metric is only Lipschitz continuous across $\partial\Omega$. 
This is the best one can expect. A function $f$ in general is called {\it Lipschitz continuous} if an exact modulus of continuity
$$
|f(z)-f(w)|\leq C |z-w|
$$
holds for $z$ close to $w$ and for some fixed constant $C$.


\subsection{Electrostatic versus hydrodynamic Green functions}\label{sec:electro-hydro}

We shall now consider the Schottky double in hydrodynamical and electrostatic contexts.
Hodge decomposition of a two-form $\omega$, representing for example a vorticity distribution, 
becomes explicitly (see (\ref{omegaddG}))
\begin{equation*}\label{Hodgeomega1}
\omega =-d*dG^{\omega} + c\cdot {{\rm vol}}, \quad c=-\frac{1}{V}\int_M \omega.
\end{equation*}
If $\omega$ is antisymmetric with respect to $J$, then $c=0$, and in particular if 
$$
\omega=\delta_a -\delta_{J(a)} \quad (a\in \Omega),
$$
describing a vortex pair with vortices of opposite unit strengths at $a\in\Omega$ and $\tilde{a}=J(a)\in \tilde\Omega$,
then the corresponding Green function $G^\omega(z)$  is simply the odd
extension to the Schottky double of the ordinary Dirichlet Green function for  $\Omega$ with zero
boundary values. 
A suggestive terminology, inspired by  \cite{Cohn-1980},  is to call this Dirichlet Green function 
the {\it electrostatic Green function} and denote it $G_{\rm electro}(z,a)$. Thus
\begin{equation}\label{Gelectro}
G_{\rm electro}(z,a)=G^{\delta_a-\delta_{\tilde{a}}}(z)\quad (z\in\Omega).
\end{equation}

The monopole Green function (\ref{ddGdelta}) on the double, that is $G(z,a)=G^{\delta_a}(z)$, may in this context be denoted
$G_{\rm double}(z,a)$. This gives, as a reformulation of (\ref{Gelectro}),
\begin{equation}\label{GGG}
G_{\rm electro}(z,a)= G_{\rm double}(z,a)-G_{\rm double} (z,J(a)).
\end{equation}
As a further observation, we notice that the differential of the function in (\ref{Gelectro}), being exact on the Schottky double,
has no periods. It follows that the analytic completion of it has only imaginary periods. In addition it has
polar divisor $a-J(a)$, the same as the Abelian differential $\upsilon_{a-J(a)}$ introduced in (\ref{upsilon}). Therefore
\begin{equation}\label{dGdGnu}
dG_{\rm electro}(z,a)+\I*dG_{\rm electro}(z,a)=-\frac{1}{2\pi}\upsilon_{a-J(a)}(z).
\end{equation}

As previously mentioned, the differential  $\upsilon_{a-b}$ does not depend analytically
on $a$. Still the combination used in (\ref{dGdGnu}), namely $\upsilon_{a-J(a)}$, 
is analytically with respect $a$. In fact, it equals that fundamental Abelian differential
which has zero periods around those cycles on the Schottky double which go across the surface and
returns on the back-side (the ``$\alpha$-cycles'', see below).
The simple details of explanation can be found in  Lemma~2.3 of \cite{Gustafsson-Sebbar-2012}, and in the doubly connected 
case it becomes apparent, as is discussed after equation (\ref{exupsilonab}) below.

In hydrodynamic contexts the Green function has the role of being a stream function, like $\psi$  in (\ref{psi}), 
but in the case that the domain $\Omega$ is multiply connected it is actually another Green function which is more
relevant, namely the {\it hydrodynamic Green function}, to be denoted $G_{\rm hydro}(z,a)$,
again inspired by \cite{Cohn-1980}. Instead of having vanishing Dirichlet data on the boundary
it is only required to have locally constant boundary values, and is then determined by preassigned
circulations around the boundary components, this in order to 
automatically adapt to the Kelvin/Helmholtz law of conservation of circulations.
For this reason the hydrodynamic Green function actually depends on a number of parameters, the circulations.
At least in the case that all circulations around the inner boundary components are zero,
the hydrodynamic Green function can be traced back to \cite{Koebe-1916, Lin-1943}. 
See also \cite{Crowdy-Marshall-2005, Crowdy-Marshall-2006, Crowdy-Marshall-2007}, in particular for computational aspects.
More general circulations are treated in, for example, \cite{Cohn-1980, Gustafsson-1979, Flucher-Gustafsson-1997, Flucher-1999}.


\begin{figure}
\begin{center}
\includegraphics[scale=0.7]{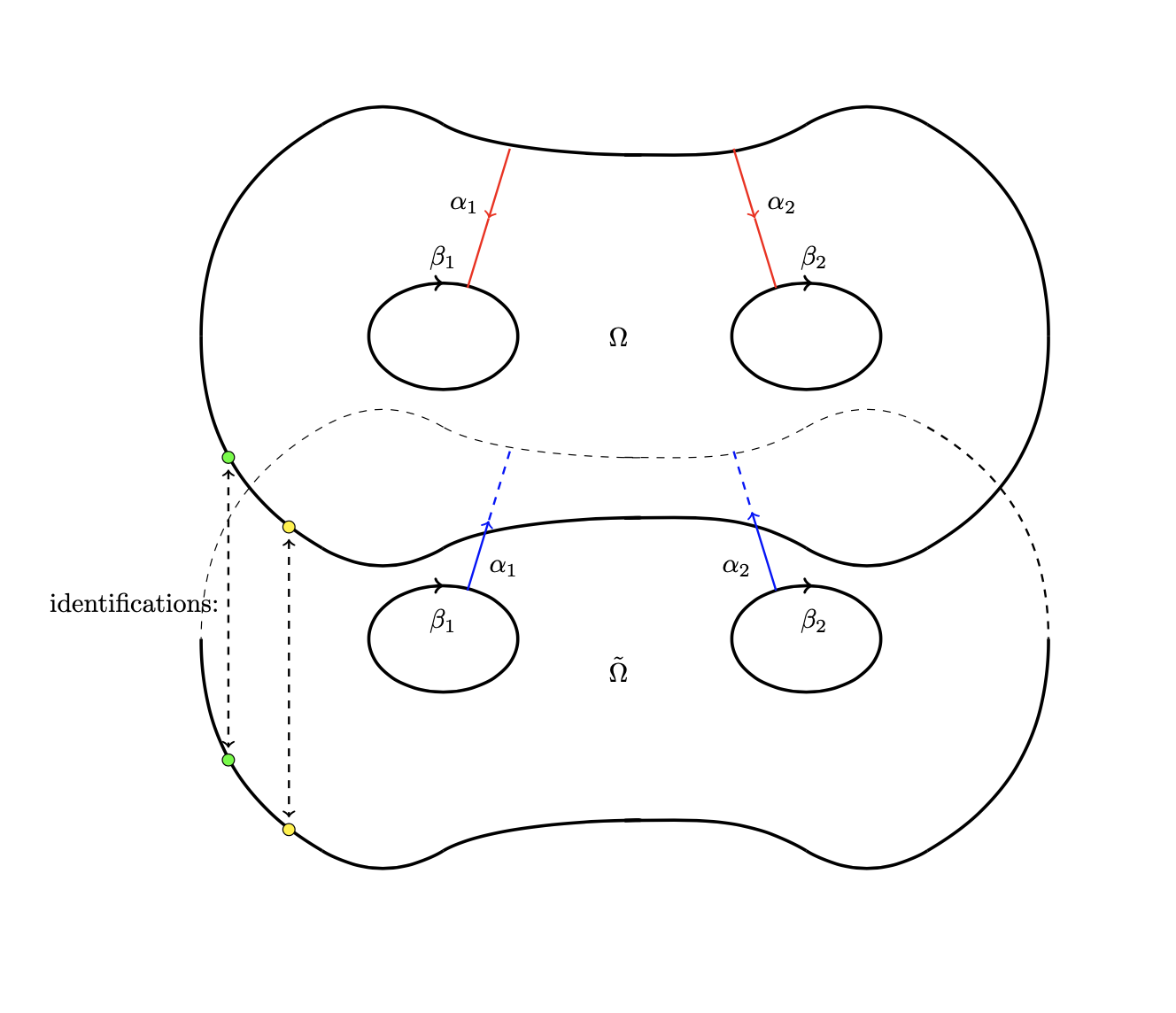}
\end{center}
\caption{Illustration of Schotttky double, with homology basis, for a domain of connectivity $\texttt{g}+1=3$.}
\label{fig:Schottky}
\end{figure} 


In order to describe the period requirements for the hydrodynamic Green function we first have to fix, 
for Schottky double $M=\hat{\Omega}$, an appropriate homology basis 
$\{\alpha_1, \dots, \alpha_{\texttt{g}},\beta_1,\dots, \beta_\texttt{g}\}$.
We shall choose it so that the curves $\beta_j$,
$j=1,\dots,\texttt{g}$, coincide with the inner components $(\partial\Omega)_j$ of $\partial\Omega$, 
and so that each curve $\alpha_j$ goes from the outer component $(\partial\Omega)_0$
through $\Omega$ to $(\partial\Omega)_j$, and then back again along the same track
on the backside. See Figure~\ref{fig:Schottky}. We also introduce the {\it harmonic measures}
$u_j$, $j=1,\dots,\texttt{g}$, here defined to be those harmonic functions in $\Omega$ which have boundary values 
$$
u_j(z)=
\begin{cases}
1, \quad z\in (\partial\Omega)_j,\\
0, \quad z\in \partial\Omega\setminus (\partial\Omega)_j.
\end{cases}
$$

These functions (or potentials) are traditionally called harmonic measures because their normal derivatives represent, 
after integration, natural measures (for example electrostatic charges) on the boundary, in the same sense as the normal 
derivative of the Green function represents a harmonic measure as in Definition~\ref{def:harmonic measure}
and equation (\ref{dGdn}).

In $\Omega$ we have
\begin{equation}\label{dudU}
du_j=-2dU_{\beta_j}=-2\eta_{\beta_j},
\end{equation}
where the $\eta_{\beta_j}$ is one of the basic harmonic forms on the Schottky double, see (\ref{deltaeta}), (\ref{etaalpha}), (\ref{etabeta}). 
(The one-form $\eta_{\beta_j}$ is not to be confused with the measure $\eta_a$ in equation (\ref{dGdn}), even though
the letter $\eta$ in both cases refer to something ``harmonic''.)
Since $u_j$ is single-valued on $\Omega$, so is $U_{\beta_j}$, and integration of (\ref{dudU}) gives
\begin{equation}\label{Uup}
2U_{\beta_j}=-u_j+p_j \quad \text{in \,}\Omega,
\end{equation}
where the $p_j$ are constants of integration. These will turn out to be the circulations.

Recall from (\ref{periods}) the period matrices $P=(P_{kj})$, $Q=(Q_{kj})$, $R=(R_{kj})$: 
$$
P_{kj}=-\oint_{\beta_j} *\eta_{\beta_k}, \quad Q_{kj}=-\oint_{\alpha_j}*\eta_{\alpha_k}, \quad   R_{kj}=\oint_{\alpha_j}*\eta_{\beta_k}.
$$
In the last integral the contributions from $\alpha_j\cap \Omega$ and $\alpha_j\cap \tilde{\Omega}$
cancel since the curve goes in opposite directions on the two sides of the double , while the integrand $*\eta_{\beta_k}$
itself is even. Thus $R=(R_{kj})=0$ as a consequence of $M=\Hat{\Omega}$ being a symmetric Riemann surface. Note also that 
\begin{equation}\label{PUU}
P_{kj}=\int_{\hat{\Omega}} \eta_{\beta_k}\wedge *\eta_{\beta_j}
=2\int_{{\Omega}} \eta_{\beta_k}\wedge *\eta_{\beta_j}
=\frac{1}{2}\int_\Omega du_k\wedge *du_j
\end{equation}
by (\ref{ointdU}), (\ref{dudU}). Thus the system (\ref{RQPR}) takes the form 
\begin{equation}\label{stardUqdU}
-*\eta_\beta=P\,\eta_\alpha, \quad *\eta_\alpha=Q\,\eta_\beta.
\end{equation}
These equations are consistent since (\ref{PQIR}) shows that $Q=P^{-1}$ in the present case.
 
Consider now a fluid flow in $\Omega$ governed by Euler's equation (see Section~\ref{sec:Bernoulli}),
and copy this flow symmetrically to $\tilde{\Omega}$ so that we on the double $M$ have a flow
satisfying $\nu=J^* (\nu)$, this star denoting pull-back of a one-form. In particular, the flow does not cross $\partial\Omega$.
The stream function $\psi$, which is single-valued on $\Omega$ and 
for which the fluid velocity field as a one-form is given by $\nu=-*d\psi$,
must, in the case of a single vortex of strength $\Gamma=1$ at $a\in\Omega$, have a local expansion as in 
(\ref{psi}) and so be of the form 
$$
\psi=G_{\rm electro}(\cdot,a)+\psi_0
$$
for some function $\psi_0$ which is harmonic in all $\Omega$ and satisfies $d\psi_0=0$ along $\partial \Omega$. The latter means
that $\psi_0$ is constant on each boundary component, and these constants may depend on $a$. 

We thus have, for the flow, 
$$
\nu=-*dG_{\rm electro}(\cdot,a)+\eta,
$$
where $\eta=-*d\psi_0$ is a harmonic flow. 
Since $\psi_0$ takes nonzero, and possibly different, constant values on the components of 
$\partial\Omega$ it does not, in the multiply connected case,
extend to the  Schottky double as a harmonic function. However, $\psi_0$ is single-valued
on $\Omega$ and $\eta$ extends as a harmonic one-form, in fact an ``even'' form (symmetric
with respect to the involution), hence it is a linear combination of only the $\eta_{\alpha_j}$ ($j=1,\dots, \texttt{g}$),
referring to the basic harmonic forms in (\ref{etaalpha}), (\ref{etabeta}).

The above analysis leads eventually to that
$$
\psi=G_{\rm hydro}(\cdot,a), \quad\nu=-*dG_{\rm hydro}(\cdot,a),
$$
where, given the periods, $G_{\rm hydro}(\cdot,a)$ is an instance of the hydrodynamic Green function,
defined as follows.
\begin{definition}
With $p_1,\dots,p_\texttt{g}$ any set of prescribed periods
we define the {\it hydrodynamic Green function} for $\Omega$ associated with the list $p_1, \dots, p_{\texttt{g}}$ by
\begin{equation}\label{Ghydrodef}
G_{\rm hydro}(z,a)
=G_{\rm electro} (z,a)+\frac{1}{2}\sum_{i,j=1}^{\texttt g}Q_{ij}(u_{i}(z)-p_i)(u_{j}(a)-p_j) 
\end{equation}
for $z,a\in\Omega$.
\end{definition} 

A more complete notation would be $G_{{\rm hydro}, p_1,\dots, p_\texttt{g}}(z,a)$, but we refrain from using that.
The crucial properties of $G_{\rm hydro}(z,a)$ are that
\begin{equation}\label{dGhydro}
dG_{\rm hydro}(\cdot,a)=0 \quad \text{along }\partial \Omega,
\end{equation}
and that the periods have the prescribed values independent of $a$:
\begin{equation}\label{ointdGp}
-\oint_{\beta_k} *dG_{\rm hydro}(\cdot,a)=p_k.
\end{equation}
Aside from the obvious symmetry
$$
G_{\rm hydro}(a,b)=G_{\rm hydro}(b,a),
$$
the normalization statement
\begin{equation}\label{hydrohydro}
\oint_{\partial\Omega}G_{\rm hydro}(\cdot,a)*dG_{\rm hydro}(\cdot,b)=0,
\end{equation}
valid for all $a,b\in\Omega$, comes out as a consequence of (\ref{Ghydrodef}).

Independent of the construction, $G_{\rm hydro}(z,a)$ is determined by the singularity structure
together with (\ref{dGhydro}), (\ref{ointdGp}), (\ref{hydrohydro}).
From these one can deduce (as in (\ref{mutual energyG}))
$$
G_{\rm hydro}(a,b)=\int_\Omega dG_{\rm hydro}(\cdot, a)\wedge *dG_{\rm hydro}(\cdot, b),
$$
from which the symmetry is a consequence. 
See also \cite{Grotta-Ragazzo-Gustafsson-Koiller-2024}.
The period property (\ref{ointdGp}) requires a short proof:
$$
\oint_{\beta_k}*dG_{\rm hydro}(\cdot, a)=\oint_{\beta_k}*dG_{\rm electro}(\cdot,a)
+\frac{1}{2}\sum_{i,j=1}^{\texttt g}Q_{ij}\big(u_j(a)-p_j\big)\oint_{\beta_k}*du_i 
$$
$$
=\oint_{\partial \Omega}{u_k}*dG_{\rm electro}(\cdot, a)
+\frac{1}{2}\sum_{i,j=1}^{\texttt g}Q_{ij}\big(u_j(a)-p_j\big)\oint_{\partial\Omega}u_k *du_i
$$
$$
=-u_k(a)+\sum_{i,j=1}^{\texttt g}Q_{ij}\big(u_j(a)-p_j\big))P_{ki}=-p_k.
$$
Here we used (\ref{Uup}), (\ref{PUU}) and, in the final step, the fact that $Q=P^{-1}$.


\subsection{The hydrodynamic Bergman kernel}\label{sec:hydrodynamic Bergman} 

The {Bergman kernel}  for the full space of square 
integrable analytic functions in the planar domain $\Omega$ was discussed in 
Section~\ref{sec:Bergman} and denoted there $K(z,a)$,
as is standard. We shall in this section denote it
$K_{\rm electro}(z,a)$ in order to conform with the notation for the  electrostatic Green function, the 
 regular part of which then becomes $H_{\rm electro}(z,a)$, defined like in (\ref{GlogH}). Thus
\begin{equation}\label{KHelectro}
K_{\rm electro}(z,a)dzd\bar{a}
=-\frac{2}{\pi}\frac{\partial^2 H_{\rm electro}(z,a)}{\partial z\partial\bar{a}}dzd\bar{a}, \quad z,a\in\Omega,
\end{equation}
and considering it as reproducing kernel for the square integrable holomorphic differentials on $\Omega$
we have, in similarity with (\ref{Kreproducing}),
\begin{equation}\label{Kelectroreproducing}
f(a)da=\frac{\I}{2}\int_\Omega f(z)\wedge \overline{K_{\rm electro}(z,a)dzd\bar{a}}.
\end{equation} 
  
Using instead the hydrodynamic Green function and its regular part $H_{\rm hydro}(z,a)$, for {any} choice of circulations, 
one gets what usually goes under names as the {\it reduced}, {\it exact} or {\it semiexact}, Bergman kernel. We shall call it
the {hydrodynamic} Bergman kernel:
\begin{definition}\label{def:hydrodynamic Bergman} 
The {\it hydrodynamic Bergman kernel} $K_{\rm hydro}(z,a)$ is the reproducing kernel for the Hilbert space of those 
square integrable analytic functions $f$ in $\Omega$ which have a single-valued integral in $\Omega$,
equivalently are of the form $f=F^\prime$ for some $F$ analytic in $\Omega$.
The defining property is that $K_{\rm hydro}(\cdot,a)$ shall itself belong to the space and that, for all $f$ in it, 
\begin{equation}\label{Khydroreproducing}
f(a)da=\frac{\I}{2}\int_\Omega f(z)\wedge \overline{K_{\rm hydro}(z,a)dzd\bar{a}}.
\end{equation}
In terms of the regular part of hydrodynamic Green function it is given by
\begin{equation}\label{Khydro}
K_{\rm hydro}(z,a)dzd\bar{a}
=-\frac{2}{\pi}\frac{\partial^2 H_{\rm hydro}(z,a)}{\partial z\partial\bar{a}}dzd\bar{a}. 
\end{equation}
\end{definition}

The proof that the kernel (\ref{Khydro}) has the stated reproducing property is straight-forward, and well-known,
but we shall still say a few words about it.
For the electrostatic Green function we had
$$
\int_{\partial\Omega}f(z)G_{\rm electro}(z,a)dz=0,
$$
simply because $G_{\rm electro}(z,a)$ vanishes on the boundary. In the derivation of (\ref{intfG}) we actually used only
that the left member above is independent of $a$, and then we differentiated with respect to $a$. In the case of 
$G_{\rm hydro}(z,a)$ we instead use partial integration: when $fdz=dF$ in $\Omega$,
$$
\int_{\partial \Omega}f(z)G_{\rm hydro}(z,a)dz=-\int_{\partial\Omega}F(z)dG_{\rm hydro}(z,a)=0
$$
because of (\ref{dGhydro}), and so by Stokes' theorem,
$$
\int_{ \Omega}f(z)\frac{\partial G_{\rm hydro}(z,a)}{\partial \bar{z}}d\bar{z}dz=0.
$$
Then the hydrodynamic counterpart of (\ref{intfG}) follows after taking a derivative
with respect to $a$. Again we see that there is some redundancy: we need only that the left member above is
independent of $a$. This opens up for replacing $G_{\rm hydro}(z,a)$ with other functions, for example a certain 
Neumann function, see Section~\ref{sec:Neumann}.

Part of the requirements on $K_{\rm hydro}(z,a)dz$ is that it, as a differential, is exact in $\Omega$.
Thus it needs to satisfy
\begin{equation}\label{betaKhydro}
\oint_{\beta_j} K_{\rm hydro}(z,a)dz=0, \quad j=1,\dots, \texttt{g}.
\end{equation}
Since $a\in\Omega$, thus $a\notin \beta_j$, the verification of (\ref{betaKhydro}) is straight-forward:
$$
\oint_{\beta_j} K_{\rm hydro}(z,a)dz=-\frac{2}{\pi}\frac{\partial}{\partial\bar{a}}\oint_{\beta_j}\frac{\partial H_{\rm hydro}(z,a)}{\partial z}dz
=-4\frac{\partial}{\partial\bar{a}}\oint_{\beta_j}\frac{\partial G_{\rm hydro}(z,a)}{\partial z}dz
$$
$$
=-2\frac{\partial}{\partial\bar{a}}\oint_{\beta_j}\big({dG_{\rm hydro}(z,a)}+\I *dG_{\rm hydro} \big)
=0+2\I\frac{\partial }{\partial\bar{a}}p_j =0.
$$
Here we used (\ref{dGhydro}), (\ref{ointdGp}) in the last step. 
Similarly, one shows that
\begin{equation}\label{alphaKelectro}
\oint_{\alpha_j} K_{\rm electro}(z,a)dz=0, \quad j=1,\dots, \texttt{g}.
\end{equation}

The space of the holomorphic functions of the form $f=F^\prime$ has complex  codimension 
$\texttt{g}$ in the full Bergman space 
because for $f$ to have a single-valued anti-derivative requires that the $\texttt{g}$ integrals $\oint_{\beta_j}fdz$
($j=1,\dots,\texttt{g}$) vanish.	
In view of (\ref{Ghydrodef}), (\ref{Uup}) it follows that, accordingly,  
\begin{equation}\label{KKQ} 
K_{\rm electro}(z,a) -K_{\rm hydro}(z,a)
=2\sum_{i,j=1}^\texttt{g} Q_{ij}\frac{\partial u_i}{\partial z}\frac{\partial u_j}{\partial \bar{a}}.
\end{equation}

At this stage the Bergman kernel of a closed surface in Definition~\ref{def:Bergman and Schiffer},
specifically (\ref{K}), comes into use, the closed surface now being the Schottky double $M=\hat{\Omega}$ of $\Omega$. 
Recall the monopole Green function $G_{\rm double}(z,a)$ introduced in the beginning of Section~\ref{sec:electro-hydro},
which we decompose as
\begin{equation}\label{Gdouble}
G_{\rm double}(z,a)=\frac{1}{2\pi}\big(-\log |z-a|+H_{\rm double}(z,a)\big).
\end{equation}
Here the last term gives the Bergman kernel for the double:
\begin{equation}
\label{KdoubleH}
K_{\rm double}(z,a)=-\frac{2}{\pi}\frac{\partial^2 H_{\rm double}(z,a)}{\partial z \partial\bar{a}}.
\end{equation}
Now we compare (\ref{KKQ}) with the expansion (\ref{KPQ}), where by Definition~\ref{def:omegagamma}
and (\ref{dudU}),
$$
\omega_{\beta_j}=dU_{\beta_j}+\I*dU_{\beta_j}=2\frac{\partial U_{\beta_j}}{\partial z}dz
=-\frac{\partial u_j}{\partial z}dz.
$$
Since $P^{-1}=Q$ (recall (\ref{PQIR}), where presently $R=0$) we find that
\begin{equation}\label{KQu}
K_{\rm double}(z,a)dzd\bar{a}
=\sum_{k,j=1}^\texttt{g} Q_{kj}\frac{\partial u_k}{\partial z}\frac{\partial u_j}{\partial \bar{a}}dzd\bar{a}.
\end{equation}
Thus (\ref{KKQ}) gives the fundamental identity
\begin{equation}\label{KKH}
K_{\rm electro}(z,a)-K_{\rm hydro}(z,a)=2K_{\rm double}(z,a).
\end{equation}

Here $K_{\rm electro}$ and $K_{\rm hydro}$ are reproducing kernels for the planar domain $\Omega$,
while $K_{\rm double}$ is a reproducing kernel for the Schottky double of $\Omega$.
This explains the factor $2$ in (\ref{KKH}): the
reproducing property of $K_{\rm double}$ is given in (\ref{Kreproducing}) and involves integration over all
of double $M=\hat{\Omega}$, while the reproducing properties (\ref{Kelectroreproducing}), (\ref{Khydroreproducing})
of  $K_{\rm electro}$ and $K_{\rm hydro}$ only involve integration over $\Omega$. 

Implicit above is that all kernels have analytic continuations to the double (via the Green functions),
and the relations above remain valid all over the surface. The restrictions of $K_{\rm electro}$ and $K_{\rm hydro}$ 
to the back side are, after a pull back to the front side, often called the {\it adjoint} kernels.
For example, the adjoint $L_{\rm hydro}(z,a)dz$ of $K_{\rm hydro}(z,a)dz$ may  on the level of differentials be defined by 
\begin{equation}\label{adjointBergmanhydro}
L_{\rm hydro}(z,a)dz+\overline{K_{\rm hydro}(J(z),a)dJ(z)}=0, \quad z\in\Omega.
\end{equation}
This means that the kernel and its adjoint are glued by the boundary relation (recall that $J(z)=z$ on $\partial\Omega$)
\begin{equation}\label{LKboundary}
L_{\rm hydro}(z,a)dz+\overline{K_{\rm hydro}(z,a)dz}=0, 
\end{equation}
holding along $\partial\Omega$ (that is, with $dz$ aligned with the boundary). 
The same holds for the electrostatic kernels, and compare also with the
relation (\ref{gammaLK}) connecting the general Bergman and Schiffer kernels.

The relation (\ref{KKH}) expresses $K_{\rm double}$ in terms of $K_{\rm electro}$ and $K_{\rm hydro}$,
but we also want to have relations in the other direction. Such statements are contained in the following
proposition.

\begin{proposition}\label{prop:KKL}
The Bergman kernels on a planar domain are given in terms of the Bergman and Schiffer kernel on the double by
$$
K_{\rm electro}(z,a)dzd\bar{a}=K_{\rm double}(z,a)dzd\bar{a}-L_{\rm double}(z,J(a))dzdJ(a),
$$
$$
K_{\rm hydro}(z,a)dzd\bar{a}=-K_{\rm double}(z,a)dzd\bar{a}-L_{\rm double}(z,J(a))dzdJ(a).
$$
In the other direction,
$$
K_{\rm double}(z,a)dzd\bar{a}=\frac{1}{2}\big(K_{\rm electro}(z,a)dzd\bar{a}-K_{\rm hydro}(z,a)dzd\bar{a}\big),
$$
$$
-L_{\rm double}(z,J(a))dzdJ(a)=\frac{1}{2}\big(K_{\rm electro}(z,a)dzd\bar{a}+K_{\rm hydro}(z,a)dzd\bar{a}\big).
$$
When extended as differentials to the double by the above formulas, the Bergman kernels for $\Omega$
satisfy the period relations (\ref{betaKhydro}) and (\ref{alphaKelectro}).
\end{proposition}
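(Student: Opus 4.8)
The plan is to start from the defining relation \ref{GGG}, namely $G_{\rm electro}(z,a)=G_{\rm double}(z,a)-G_{\rm double}(z,J(a))$, and differentiate it twice. Since $G_{\rm electro}$ and $G_{\rm double}(z,a)$ carry the identical logarithmic singularity $-\frac{1}{2\pi}\log|z-a|$ at $z=a$, while $G_{\rm double}(z,J(a))$ is a smooth function of $z$ near $z=a$ (because $J(a)\in\tilde\Omega$, so $J(a)\ne a$), the relation between the regular parts reads $H_{\rm electro}(z,a)=H_{\rm double}(z,a)-2\pi\,G_{\rm double}(z,J(a))$ in a neighbourhood of the diagonal; no distributional terms arise if one works with the $H$'s throughout. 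The only non-routine point is the bookkeeping for the anti-conformal involution $J$: near $J(a)\in\tilde\Omega$ one has, by the very construction of the Schottky double, a holomorphic local coordinate whose value is $\bar a$, so that $\partial/\partial\bar a$ acting on $G_{\rm double}(z,J(a))$ turns it into the $\partial/\partial b$-derivative of $G_{\rm double}(z,b)$ evaluated at $b=\bar a$, and $dJ(a)=d\bar a$.

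Carrying this out, $\partial^2 G_{\rm double}(z,J(a))/\partial z\partial\bar a=\{\partial^2 G_{\rm double}(z,b)/\partial z\partial b\}|_{b=\bar a}=-\tfrac14 L_{\rm double}(z,\bar a)$ by the definition \ref{L} of the Schiffer kernel; hence $\partial^2 H_{\rm electro}/\partial z\partial\bar a=\partial^2 H_{\rm double}/\partial z\partial\bar a+\tfrac{\pi}{2}L_{\rm double}(z,\bar a)$, and multiplying by $-2/\pi$ and restoring the differential $dzd\bar a$ (so that $L_{\rm double}(z,\bar a)dzd\bar a=L_{\rm double}(z,J(a))dz\,dJ(a)$) gives the first asserted formula $K_{\rm electro}(z,a)dzd\bar a=K_{\rm double}(z,a)dzd\bar a-L_{\rm double}(z,J(a))dz\,dJ(a)$. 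The second formula then follows immediately by subtracting $2K_{\rm double}$ and invoking the already-established identity \ref{KKH}. The two ``other direction'' formulas are nothing more than the solution of the resulting two-by-two linear system in the unknowns $K_{\rm double}(z,a)dzd\bar a$ and $-L_{\rm double}(z,J(a))dz\,dJ(a)$ — add and subtract the first two formulas — the subtraction reproducing \ref{KKH}.

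Finally, once $K_{\rm electro}$ and $K_{\rm hydro}$ are extended to $M$ by the formulas just proved, $K_{\rm electro}(z,a)dz$ and $K_{\rm hydro}(z,a)dz$ are meromorphic one-forms in $z$ on the double whose only pole in $z$ (a double pole, inherited from $L_{\rm double}$) sits at $z=J(a)\in\tilde\Omega$, away from $\Omega\cup\partial\Omega$. The required vanishings $\oint_{\beta_j}K_{\rm hydro}(z,a)dz=0$ and $\oint_{\alpha_j}K_{\rm electro}(z,a)dz=0$ are then precisely the relations \ref{betaKhydro} and \ref{alphaKelectro} already obtained from the boundary and period properties of $G_{\rm hydro}$ and $G_{\rm electro}$ (the $\alpha$-relation using that $dG_{\rm electro}+\I*dG_{\rm electro}$ has vanishing $\alpha$-periods, cf. the discussion around \ref{dGdGnu}); since $\beta_j\subset\partial\Omega$ and $\alpha_j$ may be taken to miss the point $J(a)$, the extension does not interfere with these computations. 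I expect the coordinate/involution bookkeeping in the first step to be the one place where genuine care is needed; the remainder is linear algebra and citation.
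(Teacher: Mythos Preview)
Your argument is correct and follows essentially the same route as the paper: differentiate the identity $G_{\rm electro}(z,a)=G_{\rm double}(z,a)-G_{\rm double}(z,J(a))$ using the definitions of $K$ and $L$, then invoke the already-proved relation $K_{\rm electro}-K_{\rm hydro}=2K_{\rm double}$ for the hydrodynamic formula, with the remaining statements being linear algebra and citations of the period computations. You have simply supplied more detail (particularly on the involution bookkeeping) than the paper's terse proof does.
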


\begin{proof}
The first equation, for $K_{\rm electro}$, follows from (\ref{GGG}), (\ref{Gdouble}), (\ref{Ghydrodef}) 
along with the definitions (\ref{K}), (\ref{L}) of the kernels. For $K_{\rm hydro}$ we combine this with (\ref{KKH}).
The relations in the other direction are immediate consequences, and the period relations have already been discussed.
\end{proof}

Returning briefly to the Robin function, 
the electrostatic and hydrodynamic Green functions have their own Robin functions, denoted
$\gamma_{\rm electro}$ and $\gamma_{\rm hydro}$, and the Laplacian of these are the Bergman kernels
specialized to the diagonal:
\begin{equation}\label{Deltagamma}
-\Delta \gamma_{\rm electro}(z)=4\pi K_{\rm electro}(z,z), \quad -\Delta \gamma_{\rm hydro}(z)=4\pi K_{\rm hydro}(z,z).
 \end{equation}
For $\gamma_{\rm electro}$ this equation was mentioned already in (\ref{Deltah0K}).
Comparing with (\ref{Deltagammadouble}) we see that there is no contribution from any metric 
here. The reason is that the contributions from the two sides of the Schottky double
cancel, and $G_{\rm electro}(z,a)$, $G_{\rm hydro}(z,a)$ are indeed purely harmonic quantities.
 
For $\gamma_{\rm double}$ we have on the other hand, by (\ref{Deltagammadouble}),
$$
-\Delta\gamma_{\rm double}(z)=4\pi \big(K_{\rm double}(z,z)-\frac{1}{V}\big).
$$
The additional term in the right member enters here is because $\gamma_{\rm double}$ is 
based on the monopole Green function, which depends on the metric. In (\ref{DeltaR}), we have $\kappa =0$
(except on $\partial\Omega$) since we are using the ``pancake'' metric. 

\begin{example}\label{ex:kernels}
As an illustration of the use of differentials we have, for the unit disk $\D$, for which
$J(a)={1}/{\bar{a}}, \quad dJ(a)=-{d\bar{a}}/{\bar{a}^2}$,
$$
K_{\rm electro}(z,a)=\frac{1}{\pi(1-z\bar{a})^2}, \quad L_{\rm electro}(z,a)=\frac{1}{\pi(z-a)^2},
$$
$$
 K_{\rm double}(z,a)=0, \quad L_{\rm double}(z,J(a))=\frac{1}{\pi(z-J(a))^2}.
$$
Since $K_{\rm hydro}=K_{\rm electro}$, $L_{\rm hydro}=L_{\rm electro}$ in the simply connected case
these formulas are in full agreement with (\ref{LKboundary}) and the statements in Proposition~\ref{prop:KKL}.

The monopole Green function $G_{\rm double}(z,a)$ for the unit disk with the Euclidean metric (pancake on the double)
was computed in Proposition~14 in \cite{Grotta-Ragazzo-Gustafsson-Koiller-2024}. It seems to be difficult to find it for
more general domains, even when $G_{\rm electro}(z,a)$ is known (in the opposite direction we have (\ref{GGG})).
\end{example}


\begin{figure}
\begin{center}
\includegraphics[scale=0.7]{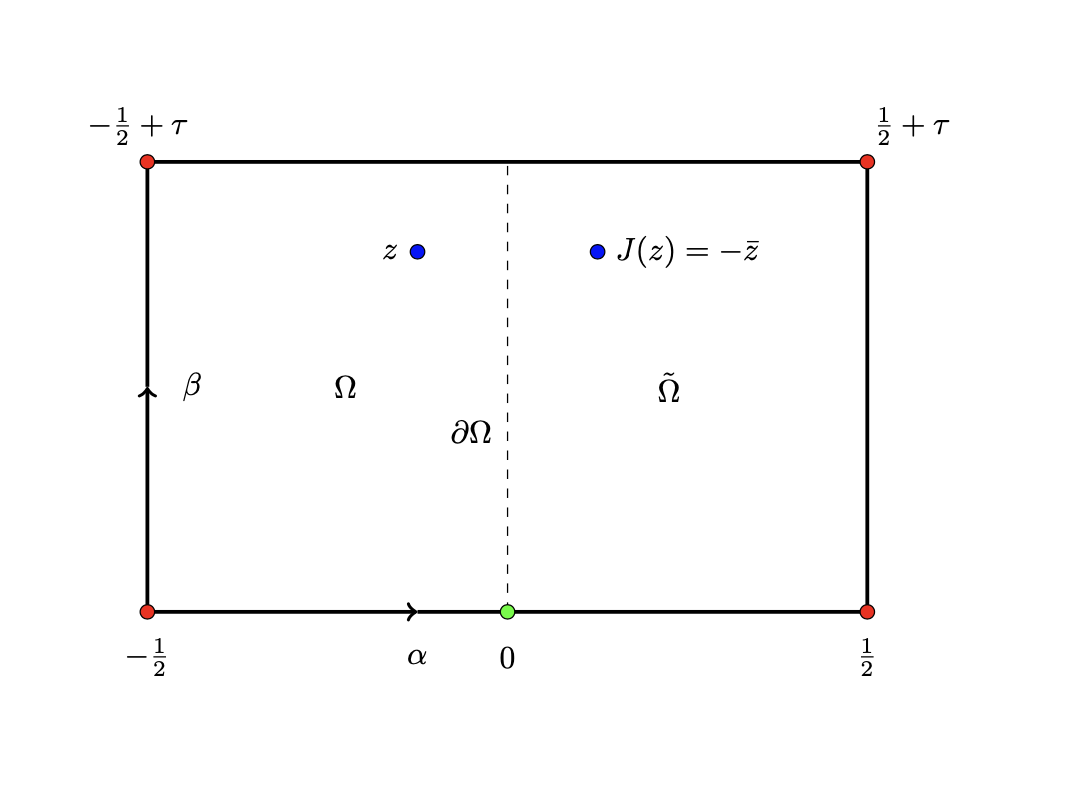}
\end{center}
\caption{Period perallelogram of a symmetric torus, the Schottky double of a doubly connected planar domain $\Omega$,
with backside $\tilde{\Omega}$ and involution $J$.}
\label{fig:torus2}
\end{figure}


\begin{example}\label{ex:kernels2}
In the case of a torus, and using notations as in Section~\ref{sec:torus}, 
the period parallelogram $F$ is a rectangle with right angles in the case of a double of a planar domain, 
thus the ``modulus'' $\tau$ is purely imaginary.
We need to identify the involution $J$ and the parts of the fundamental domain $F$ which correspond to 
$\Omega$ and $\tilde{\Omega}$, respectively. 
First of all, to simplify formulas we move $F$ leftwards so that it becomes symmetric with respect
to the $y$-axis, which then can be identified with the symmetry line of the Schottky double.

The involution then becomes reflection in the $y$-axis, $x\leftrightarrow -x$. Thus $J(z)=-\bar{z}$,
and we have (see Figure~\ref{fig:torus2})
$$
F=\{ -\frac{1}{2}<x<\frac{1}{2}, \,0<y<\im\tau\} \quad (\re \tau=0),  
$$
$$
{\Omega} =F\cap \{x<0\}, \quad \tilde{\Omega}=F\cap \{x>0\},
$$
$$
\alpha= \text{positively oriented }\,x\text{-axis},
$$
$$
\beta= \text{positively oriented }\,y\text{-axis}.
$$
We recall from Section~\ref{sec:torus} that, in present notation,
$$
K_{\rm double}(z,a)=\frac{1}{\im \tau}, \quad L_{\rm double}(z,a)=\frac{1}{\pi}\big(\wp (z-a)+\eta_1\big)- \frac{1}{\im \tau}.
$$
Since $dJ(a)=-\bar{a}$ this gives
$$
L_{\rm double}(z,J(a))dJ(a)=-\frac{1}{\pi}\big(\wp (z+\bar{a})+\eta_1\big)d\bar{a}+ \frac{1}{\im \tau}d\bar{a}.
$$

Thus, by Proposition~\ref{prop:KKL},
$$
K_{\rm electro}(z,a)=\frac{1}{\pi}\big(\wp (z+\bar{a})+\eta_1\big),
$$
$$
K_{\rm hydro}(z,a)=\frac{1}{\pi}\big(\wp (z+\bar{a})+\eta_1\big)-\frac{2}{\im\tau}.
$$
From this one confirms that (integrations along $\alpha$ and $\beta$) 
$$
\int_{-\frac{1}{2}}^\frac{1}{2} K_{\rm electro}(x,a)dx=0, \quad
\int_0^{\im \tau} K_{\rm electro}(\I y,a)\I dy=2\I,
$$
$$
\int_{-\frac{1}{2}}^\frac{1}{2} K_{\rm hydro}(x,a)dx=-\frac{2}{\im \tau}, \quad
\int_0^{\im \tau} K_{\rm hydro}(\I y,a)\I dy=0.
$$

The Abelian differential of the third kind with purely imaginary periods, see (\ref{upsilon}), is
in the present example
\begin{equation}\label{exupsilonab}
\upsilon_{a-b}(z)=\big(\zeta(z-a)-\zeta(z-b)+\eta_1 (a-b)+\frac{2\pi}{\tau} \im (a-b)\big)dz
\end{equation}
in terms of the $\zeta$-function used in Section~\ref{sec:torus}. 
Compare equations (6.5)-(6.8) in \cite{Gustafsson-Sebbar-2012}, and note that the last
term above makes the dependence on $a$ and $b$ to be only harmonic (not holomorphic).
However, when $b=J(a)=-\bar{a}$ this last term disappears, clarifying a previous remark after (\ref{dGdGnu}).

Further analysis related to doubly connected planar domains is available in \cite{Aboudi-2005}.
\end{example}


\subsection{A Neumann function}\label{sec:Neumann}

As mentioned, the space of the holomorphic functions of the form $f=F^\prime$ has complex  codimension 
$\texttt{g}$ in the full Bergman space (consisting of all square integrable analytic functions), and for the Bergman
kernels we had the relation (\ref{KKH}), also contained in Proposition~\ref{prop:KKL}. However, this relation cannot 
be raised to the level of potentials. We only have 
(\ref{GGG}), saying that $G_{\rm electro}(z,a)$ is twice the odd part $G_{\rm double}(z,a)$, while $G_{\rm hydro}(z,a)$
is not the corresponding even part. Indeed, the even part is not even harmonic outside the singularity.

Still we now introduce this even part, as a kind of Neumann function: 
\begin{definition}
The {\it Neumann function} $N(z,a)$ relevant in our context is
\begin{equation}\label{Neumann}
N(z,a)=G_{\rm double}(z,a)+G_{\rm double}(z,J(a)). 
\end{equation}
\end{definition}

Adding (\ref{GGG}) and (\ref{Neumann})  gives, in the other direction
\begin{equation}\label{GGN}
G_{\rm double}(z,a)=\frac{1}{2}\big(G_{\rm electro}(z,a)+N(z,a)\big).
\end{equation}

The above function $N(z,a)$ is not exactly the usual harmonic Neumann function for a planar domain (as in \cite{Nehari-1952},
for example).  It has the right singularity in $\Omega$ but it is not harmonic. 
The restriction to $\Omega$ satisfies instead the Poisson equation
$$
-4\frac{\partial^2 N(z,a)}{\partial z\partial\bar{z}} =\delta_a(z)-\frac{1}{{\rm area}(\Omega)}, \quad z\in\Omega.
$$
On the boundary $\partial\Omega$, the normal derivative vanishes because the function is an even function on the double.
In general, the term ``Neumann function'' usually refers to a Green type function having  constant normal derivative on the
boundary. 

Returning to the Bergman kernels in (\ref{KKH}) and comparing with (\ref{GGN}) we now have
$$
\frac{\partial^2 G_{\rm double}(z,w)}{\partial z \partial\bar{a}}
=\frac{1}{2}\Big(\frac{\partial^2 G_{\rm electro}(z,a)}{\partial z \partial\bar{a}}
+\frac{\partial^2 N(z,a)}{\partial z \partial\bar{a}}\Big).
$$
Since on the other hand,
$$
\frac{\partial^2 G_{\rm double}(z,a)}{\partial z \partial\bar{a}}
=\frac{1}{2}\Big(\frac{\partial^2 G_{\rm electro}(z,a)}{\partial z \partial\bar{a}}
-\frac{\partial^2 G_{\rm hydro}(z,a)}{\partial z \partial\bar{a}}\Big)
$$
we see that
\begin{equation}\label{NG}
\frac{\partial^2 N(z,a)}{\partial z \partial\bar{a}}=-\frac{\partial^2 G_{\rm hydro}(z,a)}{\partial z \partial\bar{a}}.
\end{equation}
This identity is well-known in classical contexts, namely when $N(z,a)$ is harmonic in $\Omega$ and has a constant,
but non-zero, normal derivative on $\partial\Omega$.
The identity might seem remarkable because $G_{\rm hydro}(z,a)$
depends on given circulations, while $N(z,a)$ has no parameters in it. However, the dependence 
on the circulation disappears under the differentiations. 


\subsection{Orthogonal decomposition of Bergman space}\label{sec:orthogonal}

The space of the square integrable holomorphic one-forms on the planar domain $\Omega$ 
is infinite dimensional, while those which extend holomorphically to the double make up a subspace of finite
dimension $\texttt{g}$. No independent metric is needed since we work with one-forms, for which there is the  intrinsic 
inner product such as (\ref{mutual energy}), with a conjugation of the second factor in the case of complex
Hilbert spaces.  Choosing an orthonormal basis $\{e_k\}_{k=1}^\texttt{g}$ for the latter space,
for example by orthonormalizing one of the bases in Proposition~\ref{prop:GUU}
as after (\ref{Kee}), we have
\begin{equation}\label{Kdoubleee}
2K_{\rm double}(z,a)dzd\bar{a}=\sum_{k=1}^\texttt{g} e_k(z)\overline{e_k(a)}.
\end{equation}
Here a factor $2$ is inserted in the left member because the kernel $K_{\rm double}$ is adapted to a Hilbert
on the Schottky double of $\Omega$, while we now are orthonormalizing only on $\Omega$ itself.
Compare discussions after (\ref{KKH}). In brief, it is $2K_{\rm double}$ that is comparable with 
$K_{\rm electro}$ and $K_{\rm hydro}$.

We may extend the basis $\{e_k\}_{k=1}^\texttt{g}$ to $\{e_k\}_{k=1}^\infty$ (still orthonormal) to write
\begin{equation}\label{Kelectroee}
K_{\rm electro}(z,a)dzd\bar{a}=\sum_{k=1}^\infty e_k(z)\overline{e_k(a)}.
\end{equation}
From (\ref{KKH}), (\ref{Kdoubleee}), (\ref{Kelectroee}) it is obvious that $K_{\rm hydro}(z,a)$
now is given by
\begin{equation}\label{Khydroee}
K_{\rm hydro}(z,a)dzd\bar{a}=\sum_{k=\texttt{g}+1}^\infty e_k(z)\overline{e_k(a)}.
\end{equation}
Thus, in some sense, 
$$
2\cdot {\rm double}+{\rm hydrodynamic}= {\rm electrostatic},
$$
and this corresponds to an orthogonal decomposition of the Bergman space.

The splitting 
$$
{\rm electrostatic}\leftrightarrow {\rm hydrodynamic}
$$
is closely related to the dichotomy of having two kinds of cycles on the Schottky double,
$$
\alpha-{\rm cycles} \leftrightarrow \beta-{\rm cycles}.
$$
Recall that the two Bergman kernels can be characterized as being 
Abelian integrals of the second kind with single second order poles on the backside of the 
surface and determined by having vanishing $\alpha$-periods (for $K_{\rm electro}$), respectively vanishing
$\beta$-periods (for $K_{\rm hydro}$), see (\ref{betaKhydro}), (\ref{alphaKelectro}) and Example~\ref{ex:kernels}. 

It may appear surprising that one of the kernels ($K_{\rm electro}$) is bigger than the other ($K_{\rm hydro}$)
in the sense that the difference ($2K_{\rm double}$) is positive definite, while the choice between $\alpha$ and
$\beta$ cycles seems completely symmetric. However, it is not really symmetric because the involution $J$
involved in the formation of the Schottky double does not treat the two types of cycles equally.
Positive definiteness questions for Bergman kernels are discussed in depth in \cite{Hejhal-1972}, along with  the
Szeg\"o kernel (see Section~\ref{sec:Szego}). The latter cuts through  $K_{\rm double}$
in a way which is (perhaps) not fully understood at present. See further below. 

The orthogonality implicit in the expansions (\ref{Kdoubleee}), (\ref{Kelectroee}), (\ref{Khydroee})
can also be described as follows (immediate consequence of (\ref{Kdoubleee}) and (\ref{Khydroee})):
\begin{proposition}\label{prop:orthogonal}
For any $a,b\in\Omega$,
$$
\int_\Omega K_{\rm double}(z,a)dz\wedge  \overline{K_{\rm hydro}(z,b)dz}=0.
$$
\end{proposition}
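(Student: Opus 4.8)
The plan is to deduce the vanishing integral directly from the two orthonormal expansions already set up, namely $2K_{\rm double}(z,a)\,dz\,d\bar{a}=\sum_{k=1}^{\texttt g}e_k(z)\overline{e_k(a)}$ in \eqref{Kdoubleee} and $K_{\rm hydro}(z,b)\,dz\,d\bar{b}=\sum_{k=\texttt g+1}^{\infty}e_k(z)\overline{e_k(b)}$ in \eqref{Khydroee}, together with the fact that $\{e_k\}_{k=1}^\infty$ is an orthonormal system for the inner product $(\nu_1,\nu_2)=\frac{\I}{2}\int_\Omega \nu_1\wedge\overline{\nu_2}$ on square integrable holomorphic one-forms on $\Omega$. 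First I would fix $a,b\in\Omega$ and write out the left-hand side as a double sum, pushing the $z$-integration through: up to the fixed antiholomorphic factors in $a$ and $b$, the quantity $\int_\Omega K_{\rm double}(z,a)\,dz\wedge\overline{K_{\rm hydro}(z,b)\,dz}$ is a linear combination of the cross inner products $\int_\Omega e_j(z)\,dz\wedge\overline{e_k(z)\,dz}$ with $j\in\{1,\dots,\texttt g\}$ and $k\in\{\texttt g+1,\dots\}$; since $j\neq k$ always in that range, every such term vanishes by orthonormality, and the result follows. I would be careful about the $\I/2$ normalization factor relating the wedge-form inner product to the area integral (see Section~\ref{sec:notations} and the discussion around \eqref{Kreproducing}), but this is only a harmless constant and does not affect the vanishing.

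An alternative, perhaps cleaner, route avoids series altogether and instead uses the reproducing property directly. Since $K_{\rm double}(\cdot,a)\,dz$ lies in the $\texttt g$-dimensional subspace $\mathcal H_0\subset L^2_a$-one-forms of differentials that extend holomorphically to the double, while $K_{\rm hydro}(\cdot,b)\,dz$ is the reproducing kernel for the orthogonal complement question is really whether $\mathcal H_0$ is orthogonal to the ``exact'' subspace $\{F'\,dz\}$. By \eqref{KKH} we have $2K_{\rm double}=K_{\rm electro}-K_{\rm hydro}$; pairing $2K_{\rm double}(\cdot,a)\,dz$ against any $f\,dz=dF$ in the exact subspace and using the reproducing properties \eqref{Kelectroreproducing} and \eqref{Khydroreproducing} gives $\frac{\I}{2}\int_\Omega f(z)\,dz\wedge\overline{2K_{\rm double}(z,a)\,dz}=f(a)\,da-f(a)\,da=0$. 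Taking $f\,dz=K_{\rm hydro}(\cdot,b)\,dz$ (which is exact, by \eqref{betaKhydro}, i.e. it is of the form $F'\,dz$) yields exactly the claimed identity, after dividing by the constant $2$ and by the $\I/2$ factor. This is the argument I would actually write down, since it uses only \eqref{KKH}, \eqref{Kelectroreproducing}, \eqref{Khydroreproducing}, and \eqref{betaKhydro}, all established above.

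There is no real obstacle here; the only points requiring a line of care are (i) confirming that $K_{\rm hydro}(\cdot,b)\,dz$ is genuinely in the exact (single-valued-integral) subspace so that the reproducing identity \eqref{Khydroreproducing} and the relation \eqref{Kelectroreproducing} both apply to it — this is precisely \eqref{betaKhydro} — and (ii) keeping the normalization constants straight when converting between $\int_\Omega(\cdots)\,dxdy$ and $\frac{\I}{2}\int_\Omega(\cdots)\,dz\wedge d\bar z$. Both are routine. I expect the proof to be only a few lines, essentially: ``$K_{\rm hydro}(\cdot,b)\,dz$ is exact by \eqref{betaKhydro}, hence reproduced by both $K_{\rm electro}$ and $K_{\rm hydro}$; subtract and use \eqref{KKH}.''
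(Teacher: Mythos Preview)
Your proposal is correct, and your first approach is exactly the paper's argument: the paper simply declares the proposition to be an immediate consequence of the expansions \eqref{Kdoubleee} and \eqref{Khydroee}, since the sums involve disjoint index ranges from an orthonormal system. Your alternative route via \eqref{KKH} and the two reproducing properties is a valid and slightly more conceptual variant, but the paper does not go through it.
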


\begin{remark}\label{rem:intermediate}
There are certainly intermediate cases between the pure electrostatic and hydrodynamic Bergman kernels, corresponding
to mixing the $\alpha$- and $\beta$-cycles. Such intermediate cases for the Bergman kernels are discussed in
\cite{Schiffer-Spencer-1954, Ahlfors-Sario-1960}. These texts also discuss doubles not only of planar domains,
but also of bordered Riemann surfaces in general, and of non-orientable surfaces.
\end{remark}


\subsection{The Szeg\"o kernel}\label{sec:Szego}

In addition to the Bergman kernels $K_{\rm hydro}(z,a)$ and $K_{\rm electro}(z,a)$, there is the
{\it Szeg\"o kernel}, $K_\text{Szeg\"o}(z,a)$. This is the reproducing kernel for the analytic functions in $\Omega$
with respect to the arc-length metric, i.e. for the {\it Hardy space}, or more exactly {\it Smirnov space}, 
see \cite{Duren-1970, Bell-2016}. The inner product is in this case
\begin{equation}\label{Smirnov}
(f,g)=\int_{\partial \Omega} f(z)\overline{g(z)}|dz|,
\end{equation}
and $K_\text{Szeg\"o}(z,a)$ has the defining property that $f(a)=(f,K_\text{Szeg\"o}(\cdot ,a))$ for all $f$ in the space. It turns out
that the square of the Szeg\"o kernel is intermediate between the two Bergman kernels, see \cite{Hejhal-1972}
along with some discussion below.

Like the Bergman kernels, the Szeg\"o kernel extends to the Schottky double of the domain, however not as an ordinary differential,
but as a half-order differential in each variable, $K_\text{Szeg\"o}(z,a)\sqrt{dz}\sqrt{d\bar{a}}$. See below for the meaning of this.
The representation of the Szeg\"o kernel on the back-side of the Schottky double is known as the {\it Garabedian kernel}, see \cite{Bell-2016}.
It has a singularity of the form
\begin{equation}\label{LSzego}
L_\text{Szeg\"o}(z,a)=\frac{1}{2\pi(z-a)}+\text{regular},
\end{equation}
and it matches $K_{\text{Szeg\"o}}(z,a)$ along the boundary $\partial\Omega$ as
\begin{equation}\label{Szegoboundary}
L_\text{Szeg\"o}(z,a)\sqrt{dz}=\I \overline{K_\text{Szeg\"o}(z,a)\sqrt{d{z}}}.
\end{equation} 
In addition to the pole of $L_\text{Szeg\"o}(z,a)$ seen in (\ref{LSzego}),  $K_\text{Szeg\"o}(z,a)$ has $\texttt{g}$ zeros in $\Omega$.
See \cite{Bell-2016} and compare the discussion of the Ahlfors map in (\ref{Ahlfors}).

On a general Riemann surface the choice of square roots for half order differentials, like the Szeg\"o  kernel, is not a trivial matter,
see \cite{Hawley-Schiffer-1966, Gunning-1966}.
Making consistent choices all over the surface amounts to a choice of a {\it spin structure} of the surface (compare
\cite{Atiyah-1971}). However,  in the case of a Schottky double of a planar domain there is a canonical choice 
\cite{Hejhal-1972, Gustafsson-1987}: in (\ref{Szegoboundary}) one may simply, on augmenting the relation by another factor
$\sqrt{dz}$, express it as $L_\text{Szeg\"o}(z,a)T(z)=\I \overline{K_\text{Szeg\"o}(z,a)}$ 
where $T(z)=dz/|dz|$ is the positively oriented unit tangent vector on $\partial\Omega$.
 
The singularity structure (\ref{LSzego}) shows that $(2\pi L_\text{Szeg\"o}(z,a))^{-1}$ behaves
like the coordinate function $z-a$ for $z$ close to $a$. The expression
\begin{equation}\label{E}
\frac{1}{2\pi L_\text{Szeg\"o}(z,a){\sqrt{dz}\sqrt{da}}}=\frac{z-a}{\sqrt{dz}\sqrt{da}}+\dots
\end{equation}
is a degree $(-\frac{1}{2}, -\frac{1}{2})$ double differential on the Schottky double,
and besides the zero at $z=a$ there are in total $\texttt{g}$ poles on the double 
(located on the backside, so these are the zeros of $K_\text{Szeg\"o}(z,a)$).
It is possible to remove these poles at the price of introducing some multiplicative multi-valuedness.
This leads to the {\it Schottky-Klein prime function} (or {\it prime form}) ${E_{\rm prime}(z,a)}$,
which is usually constructed via theta functions.
We refer to \cite{Hejhal-1972, Fay-1973, Bogatyrev-2009, Crowdy-2010} for details on these matters.
In \cite{Hoker-2024} ${E_{\rm prime}(z,a)}$ is used to give an individual meaning to each of the two terms in the
decomposition (\ref{GHza}) of the Schiffer kernel.

\begin{example}
In the notations of Example~\ref{ex:kernels2} we have, in the genus one case,
$$
L_\text{Szeg\"o}(z,a)=\frac{1}{2\pi}\sqrt{\wp(z-a)-\wp(\frac{1}{2}(1+\tau))}.
$$
The single-valuedness of the square root is guaranteed by the fact that $\wp(z)$ has a double zero at $z=\frac{1}{2}(1+\tau)$.
Recall that, in present notation and referring to a variable $t=z-a$,
$$
\wp'(t)^2=4\big(\wp(t)-\wp(\frac{1}{2})\big)\big(\wp(t)-\wp(\frac{\tau}{2})\big)\big(\wp(t)-\wp(\frac{1}{2}(1+\tau))\big).
$$
See in general \cite{Ahlfors-1966}. 
\end{example}

As noticed in \cite{Hawley-Schiffer-1966}, the Garabedian kernel allows for the introduction of a canonical decomposable
Abelian differential of the third kind,
\begin{equation}\label{LLL}
\frac{L_\text{Szeg\"o}(z,a)L_\text{Szeg\"o}(z,b)}{L_\text{Szeg\"o}(a,b)}dz,
\end{equation}
defined independently of any period requirements.
The pole structure of the expression (\ref{LLL}) imitates that of the identity
\begin{equation*}\label{elementary1}
\frac{1}{z-a}-\frac{1}{z-b}=\frac{a-b}{(z-a)(z-b)}.
\end{equation*}
The famous {\it Fay trisecant identity} \cite{Fay-1973, Raina-1989} can similarly be viewed as a version of the identity
\begin{equation*}\label{elementary2}
\frac{1}{z-a}\cdot \frac{1}{w-b}-\frac{1}{z-b}\cdot \frac{1}{w-a}
=-\frac{(z-w)(a-b)}{(z-a)(z-b)(w-a)(w-b)},
\end{equation*}
or more generally
$$
\det \Big( \big(\frac{1}{z_i-a_j}\big)_{i,j}\Big)=\frac{\prod_{i<j}(z_i-z_j)(a_j-a_i)}{\prod_{i,j}(z_i-a_j)},
$$
when this is formulated in terms of $E_{\rm prime}(z,a)$.

Related to the above, and what we shall actually need about the Szeg\"o kernel,
 is that its square is comparable with the Bergman kernel. This means for example that
\begin{equation}\label{SG}
4\pi K_\text{Szeg\"o}(z,a)^2= -\frac{2}{\pi}\frac{\partial^2 H_\text{Szeg\"o}(z,a)}{\partial z \partial \bar{a}}
\end{equation}
in terms of the regular part $H_{\text{Szeg\"o}}(z,a)$ of a certain {\it Szeg\"o Green function} (our terminology), $G_\text{Szeg\"o}(z,a)$, which has the same singularity 
as the other Green functions, 
\begin{equation}\label{GSzegoH}
G_\text{Szeg\"o}(z,a)=\frac{1}{2\pi}\big(-\log |z-a| + H_\text{Szeg\"o}(z,a)\big),
\end{equation}
and which is constant on each boundary component. 
This Green function seems not to have been studied in any depth, but it is mentioned in \cite{Garabedian-1949}.
Presumably, the property (\ref{SG}) that the mixed second derivative is a perfect square characterizes the
Szeg\"o Green function. 

In any case it follows that, in analogy with (\ref{Ghydrodef}),  
$$
{G_{\text{Szeg\"o}}}(z,a)=G_{\rm electro}(z,a)+ \frac{1}{2} \sum_{k,j=1}^{\texttt{g}}c_{kj} u_k(z){u_j(a)},
$$
$$
4\pi K_\text{Szeg\"o}(z,a)^2=K_{\rm electro}(z,a)-2\sum_{k,j=1}^{\texttt{g}}c_{kj} \frac{\partial u_k}{\partial z}\frac{\partial u_j}{\partial \bar{a}},
$$
for suitable coefficients $c_{kj}$, the same in both equations. Comparing instead with the hydrodynamic Bergman kernel we can write
$$
4\pi K_\text{Szeg\"o}(z,a)^2=K_{\rm hydro}(z,a)+2\sum_{k,j=1}^{\texttt{g}}C_{kj} \frac{\partial u_k}{\partial z}\frac{\partial u_j}{\partial \bar{a}},
$$
It is a deep fact, indeed the main result (Theorem~39) in \cite{Hejhal-1972}, that both matrices $(c_{kj})$ and $(C_{kj})$
are positive definite. See \cite{Bell-Gustafsson-2022} for some recent discussions. 

\begin{remark}\label{rem:weighted Bergman}
It is possible to embed both the Bergman space and the Hardy (Smirnov) space in a single series of weighted Bergman spaces.
In the case of the unit disk these spaces are defined by the inner products
\begin{equation}\label{weighted Bergman}
(f,g)_\alpha=(\alpha+1)\int_\D f(z)\overline{g(z)}(1-|z|^2)^{\alpha}\, dxdy,
\end{equation}  
where $-1< \alpha<\infty$ is a real parameter. Choosing $\alpha=0$ one gets the Bergman inner product, while letting
$\alpha\to -1$ gives the Hardy/Smirnov inner product, up to a factor. The above can be generalized to arbitrary domains
by replacing $1-|z|^2$ with $1/\lambda(z)$, referring to  the Poincar\'{e} metric written as $ds=\lambda(z) |dz|$. 
The Poincar\'{e} metric for a  general domain is obtained from the Green function for the universal covering surface, 
see \cite{Ahlfors-1973} and after (\ref{Poincare}) in the present paper. 

For integer values of $\alpha$ the orthogonal decomposition in Section~\ref{sec:orthogonal}  generalizes to corresponding results for
the weighted spaces, see  \cite{Gustafsson-Peetre-1990, Gustafsson-Sebbar-2012}.
\end{remark}


\subsection{Extremal problems and conformal mapping}\label{sec:capacity functions} 

For a planar domain $\Omega\subset\C$ of connectivity $\texttt{g}+1$ we define,
in the spirit of \cite{Sario-Oikawa-1969, Hejhal-1972}, the {\it capacity functions} (or {\it capacities})
$$
c_D(z)=\sqrt{\pi K_{\rm hydro}(z,z)},
$$
$$
c_B(z)=2\pi K_\text{Szeg\"o}(z,z),
$$
$$
M(z)=\pi K_{\rm electro}(z,z),
$$
$$
c_\beta(z)=e^{-\gamma_{\rm electro}(z)},
$$
$$
c_1(z)=e^{-\gamma_{\rm hydro}(z)}.
$$
The notations in the left members have historical explanations. For example, $D$ stands for Dirichlet (norm),
$B$ stands for bounded ($L^\infty$-norm), $\beta$ is a traditional notation for the ``ideal boundary'' (all of the boundary,
in an abstract sense).
The origin for the notation $M$ is somewhat less clear, but a hint is that \cite{Schiffer-Spencer-1954} 
({Chapter~4}) uses a gothic $M$ for the manifold (Riemann surface) and then uses the fat ${\bf M}$ for the Hilbert
space of holomorphic differentials on it, which directly leads to the Bergman kernel.

The function $c_1(z)$ actually depends on the choice of circulations for the underlying hydrodynamic Green function. For the
purpose of the present section we may take $p_1=\dots=p_\texttt{g}=0$ in (\ref{ointdGp}), i.e. all circulation is taken up by  
the outer component of $\partial\Omega$. It then follows from the normalization (\ref{hydrohydro}) that 
$G_{\rm hydro}(\cdot,a)=0$ on that component.

The above capacity functions are solutions of the following extremal problems for analytic functions (citing from \cite{Sario-Oikawa-1969}):
\begin{align*} 
\frac{1}{c_1(z)}&= \min \sup_\Omega |f| \,: \quad&& f \text{ univalent, \,} f(z)=0,\,\, f^\prime (z)=1,\\
&= \min\, \sqrt{\frac{1}{\pi} \int_\Omega |f^\prime |^2\,dxdy }\,: \quad&& f \text{ univalent, \,}  f^\prime (z)=1,\\
\frac{1}{c_D(z)}& = \min \sqrt{\frac{1}{\pi} \int_\Omega |f^\prime|^2\,dxdy }\,: \quad&& f \text{ analytic, \,\,} f^\prime (z)=1,\\
\frac{1}{c_B(z)}&= \min \sup_\Omega |f|\,: \quad&&f \text{ analytic, \,\,} f(z)=0,\,\, f^\prime (z)=1, \\
&=\min \, \frac{1}{2\pi} \int_{\partial\Omega} |f|^2 \,ds \,: \quad&& f \text{ analytic, \,\,} f(z)=1,\\
\frac{1}{c_\beta(z)}&= \min \sup_\Omega |f|\,:\quad&&   f \text{ analytic*, \,} f(z)=0,\,\, f^\prime (z)=1 ,\\
\frac{1}{\sqrt{M(z)}}& = \min \sqrt{\frac{1}{\pi} \int_\Omega |f|^2\,dxdy } \,: \quad&& f \text{ analytic, \,\,} f(z)=1.\\
\end{align*}
Here $f$ analytic* means that $f$ is (possibly) multiple-valued analytic, but with $|f|$ single-valued. 
The side condition is to hold for one of the branches. 

In the above notation,
\begin{equation}\label{inequalities0}
c_1(z)\leq c_D(z)\leq c_B(z)\leq c_\beta (z)\leq \sqrt{M(z)},
\end{equation}
and in our more explicit notation this becomes (after taking squares)
\begin{equation}\label{inequalities}
e^{-2\gamma_{\rm hydro}(z)}\leq \pi {K_{\rm hydro}}(z,z)\leq 4\pi^2 K_\text{Szeg\"o}(z,z)^2\leq e^{-2\gamma_{\rm electro}(z)}\leq\pi K_{\rm electro}(z,z).
\end{equation}
All the above quantities can be viewed as coefficients for natural metrics.
It is not clear where ${\gamma_\text{Szeg\"o}}(a)=H_{\text{Szeg\"{o}}}(a,a)$ fits into the picture, except that
$$
e^{-2\gamma_{\rm hydro}(z)}\leq e^{-2{\gamma_\text{Szeg\"o}}(a)}\leq e^{-2\gamma_{\rm electro}(z)}. 
$$

All inequalities (\ref{inequalities0}), (\ref{inequalities}) become equality in the simply connected case. 
This means, on comparison with (\ref{Deltagamma}), 
that the Robin functions satisfy Liouville type equations: with 
$u=\gamma_{\rm hydro}=\gamma_\text{Szeg\"o}=\gamma_{\rm electro}$ in the simply connected case
we have $\Delta u=4e^{-2u}$, expressing that the metric $ds=e^{-\gamma(z)}|dz|$ has constant curvature. 
In the multiply connected case, all inequalities in (\ref{inequalities}) are strict  (under our assumptions on $\partial\Omega$). 
The first and last inequalities in (\ref{inequalities}) say that the Gaussian curvatures  of the corresponding metrics satisfy 
(see \cite{Sario-Oikawa-1969})
\begin{equation}\label{estimateskappa}
\kappa_{\rm electro}\leq -4, \quad {\kappa_{\rm hydro}}\leq -4.
\end{equation}
We do not know of any estimate for ${\kappa_\text{Szeg\"o}}$. 

The extremal functions for the minimization problems above are related to canonical mapping functions, briefly as follows:
\begin{itemize}

\item The two problems for $c_1(z)$ have the same extremal function $f(z)$. Such a function maps $\Omega$ 
onto a circular slit disk centered at the origin and having radius $\gamma_{\rm hydro}(z)$. More precisely, $f$ is given by
\begin{equation}\label{fGG} 
f(z)=\exp\big(\gamma_{\rm hydro}(a) -{G}_{\rm hydro}(z,a)-\I {G}^*_{\rm hydro}(z,a)  \big). 
\end{equation}

\item The extremal function for the problem for $c_D(z)$ is a normalized version of the integral of the 
hydrodynamic Bergman kernel:
\begin{equation}\label{fKK}
f(z)=\int_a^z \frac{{K_{\rm hydro}}(\zeta,a)}{{K_{\rm hydro}}(a,a)}\,d\zeta.
\end{equation}

\item 
Any primitive function of $L_{\rm hydro}(z,a)$, 
\begin{equation}\label{fintL}
f(z)=\int_b^z L_{\rm hydro}(\zeta, a)d\zeta
\end{equation}
where $b\ne a$,
is univalent and maps $\Omega$ onto the exterior of $\texttt{g}+1$ compact convex sets in the plane
(then with $f(a)=\infty$, $f(b)=0$). See \cite{Schiffer-1943, Sario-Oikawa-1969}.

\item Mixing $K_{\rm hydro}$ and $L_{\rm hydro}$ in linear combinations
gives canonical maps onto slit regions. For example, the function
$$
f(z)=\int_b^z \big(L_{\rm hydro}(\zeta,a)- K_{\rm hydro}(\zeta,a)\big) d\zeta
$$
maps $\Omega$ onto a horizontal slit domain. Indeed, as a consequence of 
(\ref{LKboundary}), the differential of $f$ equals $f^\prime (z)dz= -2\re (K_{\rm hydro}(z,a)dz)$, 
hence is real along the boundary. 

\item The extremal function for the first problem for $c_B(z)$ is a suitably scaled {\it Ahlfors map}.
In terms of the Szeg\"o and Garabedian kernels, see (\ref{LSzego}), (\ref{Szegoboundary}), this is
\begin{equation}\label{Ahlfors}
f(z)= \frac{K_\text{Szeg\"o}(z,a)}{K_\text{Szeg\"o}(a,a)L_\text{Szeg\"o}(z,a)},
\end{equation}
and after normalization it maps $\Omega$ onto the unit disk covered $\texttt{g}+1$ times.
  
\item The extremal function for the second problem for $c_B(z)$ is the (normalized) Szeg\"o kernel:
\begin{equation}\label{KLSzego}
f(z)=\frac{K_\text{Szeg\"o}(z,a)}{K_\text{Szeg\"o}(a,a)}
\end{equation}
\item For $c_\beta$, the extremal function (multiple-valued) is the exponential of the analytic completion of the Green function. More precisely:
\begin{equation}\label{fKSzego}
f(z)=\exp\big(\gamma_{\rm electro}(a) -G_{\rm electro}(z,a)-\I G_{\rm electro}^*(z,a)  \big). 
\end{equation}
\item For $M(z)$, the extremal function is the normalized Bergman kernel:
\begin{equation}\label{fKelectro}
f(z)=\frac{K_{\rm electro}(z,a)}{K_{\rm electro}(a,a)}.
\end{equation}
\end{itemize}

References for the inequalities (\ref{inequalities0}), (\ref{inequalities}) are as follows.
\begin{enumerate}

\item The first inequality is proved in \cite{Sario-Oikawa-1969} (p.172-179). It is actually obvious in view of the extremal problems
because a univalent function is by definition analytic.

\item The second inequality (related to Ahlfors-Beurling \cite{Ahlfors-Beurling-1950}) also appears in \cite{Sario-Oikawa-1969},
and in \cite{Sakai-1969} (see also \cite{Sakai-1982}) it is shown to be strict in the multiply connected case (in \cite{Sario-Oikawa-1969} 
this is stated as an open question).

\item The third inequality, in the form $\leq$, is obvious from the extremal problems. Also the strict version follows easily, and is proved explicitly in \cite{Sario-Oikawa-1969}, p.178-179.

\item The fourth inequality was conjectured in \cite{Suita-1972} and proved in \cite{Berndtsson-Lempert-2016}. 
Suita \cite{Suita-1972} also has a simple proof of the inequality $4\pi K_\text{Szeg\"o}(a,a)^2\leq K_{\rm electro}(a,a)$.
Hejhal \cite{Hejhal-1972} shows strict inequality in the multiply connected case.

\end{enumerate}



\bibliography{bibliography_gbjorn.bib}

\end{document}